\documentclass[12pt,letterpaper,reqno]{article}
\parindent=0pt

\usepackage{graphicx}
\usepackage{amsmath}
\usepackage{amsfonts}
\usepackage{amsthm}
\usepackage{amssymb}
\usepackage[T1]{fontenc}
\usepackage[toc,page]{appendix}
\usepackage{verbatim}

\usepackage{hyperref}

\theoremstyle{plain}
\newtheorem{theorem}{Theorem}[section]

\newtheorem{condition}[theorem]{Assumption}

\newtheorem{corollary}[theorem]{Corollary}

\newtheorem{definition}[theorem]{Definition}
\newtheorem{example}[theorem]{Example}

\newtheorem{lemma}[theorem]{Lemma}

\newtheorem{remark}[theorem]{Remark}

\usepackage{ulem,soul,xcolor}

\newcommand\repd[2]{\sout{\color{red} #1}{\color{blue} #2}}

\newcommand{\mr}{\mathbb{R}}
\newcommand{\mc}{\mathcal}

\newcommand{\lan}{\langle}
\newcommand{\ran}{\rangle}
\newcommand{\diverg}{\mathrm{div}}
\newcommand{\tr}{\mathrm{tr}}
\newcommand{\mt}{\mathbb{T}}
\newcommand{\eps}{\varepsilon}
\newcommand{\rmd}{\,\mathrm{d}}
\newcommand{\E}{\mathbb{E}}
\newcommand{\td}{\tilde}
\newcommand{\curl}{\mathrm{curl}\,}
\newcommand{\divv}{\mathrm{div}\,}

\newcommand{\nH}{H}
\newcommand{\Radon}{Borel }

\newcommand\wek[2]{{\begin{pmatrix}{#1}\\{#2}\end{pmatrix}}}
\newcommand\mat[4]{{
\begin{pmatrix}#1&#2 \\#3 & #4\end{pmatrix}
}}

\newcommand\dela[1]{}

\newcommand{\rH}{\mathrm{H}}

\newcommand{\lb}{\langle}
\newcommand{\rb}{\rangle}


\numberwithin{equation}{section}

\title{Existence for stochastic 2D Euler equations with positive $H^{-1}$ vorticity}
\author{
	Zdzis\l aw Brze\'zniak\footnote{Department of Mathematics, University of York, YO10 5DD Heslington, York, UK. E-mail address: {\tt zdzislaw.brzezniak@york.ac.uk}}\;\; and
	Mario Maurelli\footnote{Dipartimento di Matematica `Federigo Enriques', Universit\`a degli Studi di Milano, via Saldini 50, 20133 Milano, Italy. E-mail address: {\tt mario.maurelli@unimi.it}}}
\date{}

\begin{document}

\maketitle

\begin{abstract}
We prove the existence of non-negative measure and $H^{-1}$-valued vorticity solutions to the stochastic 2D Euler equations in the vorticity form  with the transport type noise, starting from any non-negative vortex sheet. This extends the result by Delort \cite{Del1991} to the stochastic case.
\end{abstract}

\tableofcontents

\section{Introduction}

In this paper we consider the two-dimensional  stochastic Euler equations, in the vorticity form, with the transport type gaussian noise with the periodic boundary conditions. In other words we study the following stochastic  evolution equation on the two-dimensional torus $\mathbb{T}^2$:
\begin{align}
\begin{aligned}\label{eqn: Euler stoch vorticity form-Intro}
&\partial_t \xi + u\cdot \nabla \xi + \sum_k \sigma_k\cdot \nabla \xi \circ \,\,\dot{W}^k =0,\\
&u = K \ast \xi,
\end{aligned}
\end{align}
see below for explanation of the notation used here.

This equation represents the motion of an incompressible fluid in a periodic domain, perturbed by a noise of the transport-type. In our main result, Theorem \ref{thm:main}, we prove the weak existence,  in both the probabilistic and the analytic sense,  of a certain measure-valued solution for this equation.  More precisely, we  prove that for any initial datum $\xi_0 \in H^{-1}$ which is also a  non-negative measure, there exists a weak, measure and $H^{-1}$-valued solution to \eqref{eqn: Euler stoch vorticity form-Intro}. This result extends the existence result by Delort \cite{Del1991} from the deterministic to the stochastic case.

The deterministic incompressible, $d$-dimensional Euler equations written in the following form
\begin{align}
\begin{aligned}\label{eqn-Euler}
&\partial_t u +(u\cdot\nabla) u +\nabla p = 0,\\
&\diverg u =0,
\end{aligned}
\end{align}
describe the evolution of the velocity field  $u(t,x)\in \mr^d$ and the scalar pressure $p(t,x)\in \mr$ of an incompressible fluid.  For the simplicity sake we assume that $x$ belongs to $d$-dimensional torus  $\mt^d$. The local (in time) well-posedness of \eqref{eqn-Euler} withing a class of  smooth solutions has been known    since at l;east of the seminal work  \cite{EbiMar1970}.  A general description of the Euler equations can be found in many excellent works, for example in  \cite{MarPul1994}, \cite{Lio1996}, \cite{MajBer2002}. In the two-dimensional case, i.e. when $d=2$, the Euler equations \eqref{eqn-Euler} are known to be, at a heuristic level,  equivalent to the following  non-linear transport type equation, called  the vorticity equation,
\begin{align}
\begin{aligned}\label{eq:Eulervort_intro}
&\partial_t \xi + u\cdot \nabla \xi =0,\\
&u(t) = K \ast \xi(t), \;\;.
\end{aligned}
\end{align}
Here $\xi(t,x):=\curl  u(t,x) = \partial_{x_1}u^2(t,x) -\partial_{x_2}u^1(t,x)$ is the scalar vorticity of the fluid, which describes  how fast the fluid rotates around a point $x$.  The kernel $K:\mt^2\to\mr^2$ is given by $K=\nabla^\perp G$, where $G:\mt^2\to \mr$ is the Green function of the Laplacian on $L^2(\mt^2)$, restricted to   mean zero functions, see \cite{BrzFlaMau2016}. Alternatively, $u(t) =  \nabla^\perp \Delta_0^{-1} \xi(t)$, $t\geq 0$.

 The global well-posedness within the class of  essentially bounded solutions  has been proved in \cite{Wol1933} and \cite{Yud1963}. See also \cite{Yud1995} and \cite{Serfati1994} for an extension to almost bounded functions and \cite[Section 2.3]{MarPul1994} for a different proof using flows.  In the case of deterministic 2D Euler Equations,  there is a huge number of papers using the vorticity formulation
\eqref{eq:Eulervort_intro}  of these equations with the initial data belonging to various functional, e.g. Besov,  spaces, see for instance Serfati \cite{Serfati1994,Serfati1995},  Vishik \cite{Vishik1999}, Bernicot et al \cite{Bernicot2016}, Bahouri and Chemin \cite{BahChe1994}.

In larger classes than of  bounded solutions, the global well-posedness  has been established
by Majda and Di Perna in \cite{DiPMaj1987} for the initial  vorticity belonging to the spaces  $L^p(\mt^2)$, with $1<p<\infty$.  To be more precise, the result in \cite{DiPMaj1987}  is given in  the full euclidean space $\mathbb{R}^2$, but for the simplicity sake, in this introduction we will not distinguish between the cases of the full space and the torus. The case  we are interested in is of the initial  the vorticity being a  measure and the solution is measure-valued. One of the main results in this context is the global existence result by Delort \cite{Del1991}, where the vorticity is assumed to have  a distinguished sign and to be an element of the Sobolev space $H^{-1}(\mt^2)$. More  precisely, Delort proved that   for any  initial datum $\xi_0 \in H^{-1}(\mt^2)$ which is also a non-negative measure, there exists a non-negative measure- and $H^{-1}(\mt^2)$-valued solution $\xi$ to the vorticity equation \eqref{eq:Eulervort_intro}  such that $\xi(0)=\xi_0$. Delort's result  includes the case of an initial vortex sheet, that is when $\xi_0$ is concentrated on a $C^1$ curve. Later papers by Schochet \cite{Sch1995} and Poupaud \cite{Pou2002} gave a somehow clearer argument which we will  mostly use  in the rest of the paper. A more general existence result, where the vorticity is a sum of a non-negative measure $H^{-1}(\mt^2)$ and an $L^p(\mt^2)$ function, for some $p\ge 1$, has been studied  in \cite{VecWu1993}; \cite{Serfati1998,Serfati1998b} show the short-time existence when the initial vortex sheet consists of two parts with opposite sign and disjoint supports, and establish non-trivial bounds on propagation of the support of solutions. Studying  of irregular vorticity solutions has several motivations.  Firstly, such solutions  represent physically relevant situations, where the vorticity is concentrated on sets of   Lebesgue measure zero, see e.g.~\cite[Chapter 6]{MarPul1994}.  Secondly, such solutions are   related to the possible anomalous energy dissipation; to the best of our knowledge, the energy conservation or dissipation remains an open problem for class of the Delort solutions, see \cite{CLNS2016} for energy conservation for unbounded vorticity. Thirdly, but not lastly, they are also related  to the boundary layers phenomena, see e.g.~\cite{Cho1978}. However we do not pursue  these topics here. Let us only briefly discuss the two other classes of solutions. The authors of \cite{DiPMaj1987_2} introduce another (and different) concept of a ``measure-valued solution''.  This concept refers to a different, weaker type of a solution, i.e. in the sense of the Young measures, which we do not consider here in this paper. An interested reader can consult the paper \cite{Majda_1993} by Majda for more details about comparison between the framework of \cite{Del1991} and that of \cite{DiPMaj1987_2}. Recently a variety of types of weak solutions to \eqref{eqn-Euler}, where $\xi$ does not need to be a function or measure, have been constructed using methods related to convex integration, showing the non-uniqueness in certain functional spaces, as for instance $u\in L^\infty$, and energy dissipation, see e.g. \cite{DeLSze2009,Ise2018}. In particular, \cite{Sze2011} exhibits infinitely many weak solutions to \eqref{eqn-Euler} with an initial condition $\xi_0$ being a non-negative $H^{-1}$ measure. However, the author claims that, for those weak solutions, the vorticity $\xi = \curl u$ is in general not measure-valued. We are not aware whether one can construct wild solutions to \eqref{eqn-Euler} by convex integration methods whose vorticity is measure-valued, see also the discussion in \cite{IftLopNus2020}.


Before moving on  to the stochastic case, let us present  a short idea of the proof of the Delort result. The strategy passes through an usual compactness and convergence argument.  Take a sequence of approximants for which we can  show uniform a'priori  bounds, prove  the compactness and  show the stability of the equation in the limit. The main problem comes from the stability step, in particular from the proof of the stability of the non-linear term. The main idea used  by Delort, and later also by Schochet \cite{Sch1995},  is that the non-linearity  is continuous with respect to  non-negative, or more-generally bounded from below, $H^{-1}(\mt^2)$ measures, in the weak topologies. The precise description of the topologies involved will be given later. Hence it suffices to show the conservation  of the non-negativity, or at least a bound from below, and the uniform a'priori bounds on the total variation  and the $H^{-1}(\mt^2)$ norms of the approximations of the  solution. The transport and divergence-free nature of the vorticity equation \eqref{eq:Eulervort_intro} implies easily the non-negativity and the a priori bound on the total variation norm. Since the  $H^{-1}(\mt^2)$ norm of $\xi$ is equivalent to the $L^2(\mt^2)$ norm of $u=K \ast \xi$, i.e.  the energy, so the a'priori $H^{-1}(\mt^2)$ bound is equivalent to an a priori bound on the energy of $u$, which is also classical and available. This allows to have compactness and stability as required.

The stochastic  Euler Equations perturbed   by gaussian noise have been investigated in a large number  of publications.  Here we only give a review of a small selection of them. Many articles study such equations with either   additive or multiplicative noise but dependent on $u$ and not on its gradient. The first works are \cite{BesFla1999}, which shows  the global well-posedness of strong solutions in the case of   additive noise, for bounded vorticity in a 2D domain (though adaptedness is not proven there), \cite{Bes1999} and \cite{BrzPes2001}, which show the global existence of martingale solutions, with multiplicative noise, for $L^2(\mt^2)$ and, in the second work, $L^p(\mt^2)$ vorticity also in the  2D domains. Let us  also mention the paper \cite{MikVal2000} for a geometric approach in the case of finite-dimensional additive noise and \cite{CapCut1999} for an approach via nonstandard analysis. The works \cite{Kim2009}, \cite{GlaVic2014} contain proofs of a  local strong well-posedness of  smooth solutions, with additive and, in the second work, multiplicative noise, in general dimensions. The latter paper  contains also a proof of  a  global well-posedness  of  smooth solutions in 2D for additive or   multiplicative linear noise. The work \cite{Kim2015} deals with  low regularity solutions to the stochastic Euler equations driven by an additive noise, in any space dimension. More precisely,
the authour shows the existence of the s measure-valued solutions, in the sense of Young measures as in \cite{DiPMaj1987_2}, see the discussion above,  for the initial vorticity in the $H^{-1}$ space.

The transport type noise for the stochastic Navier-Stokes Equations has been first introduced in early papers \cite{Brz+Cap+Fland_1991,Brz+Cap+Fland_1992} and later in, e.g. \cite{Mik+Roz_2004} and \cite{BrzMot2013}.
In the case of the stochastic Euler Equations, the transport type noise has been first
 studied in \cite{Yok2014}, \cite{StaYok2014}, \cite{CruTor2015}, where the transport term acts  on the velocity and not on the vorticity.  These works show the global existence of martingale solutions for $L^2(\mt^2)$ vorticity in 2D domains. The model \eqref{eqn: Euler stoch vorticity form-Intro} has been introduced  in \cite{BrzFlaMau2016}, where the authours  prove the  global strong well-posedness in the class of  bounded vorticity solutions on the two-dimensional torus. In the recent papers \cite{FlaLuo2019} and \cite{FlaLuo2019_2}, the authors consider  the same model, but with very irregular vorticity, not in $H^{-1}(\mt^2)$ as in the present study, and  show the  existence of solutions for random  initial data.
   An analogue of the model \eqref{eqn: Euler stoch vorticity form-Intro} in the 3D case, where a stochastic advection term also appears, has been recently  considered in \cite{CriFlaHol2019}, where the authors  show the local strong well-posedness and a  Beale-Kato-Majda type criterion for the non-explosion. Let us  also point out the work \cite{FlaGubPri2011}, in which the authours consider equation  \eqref{eqn: Euler stoch vorticity form-Intro} with the initial mass concentrated in a finite number of point vortices and shows a regularization by noise phenomenon, precisely that, with full probability, no collapse of vortices happens with a certain noise, while it does happen without noise.

There are several reasons why we use transport noise. One of them, maybe the most important one,  is that the equations driven by   a transport type noise preserve the transport structure of the vorticity equation.  At a formal level, a solution $\xi(t,x,\omega)$ follows the characteristics of the associated SDEs, i.e.  $\xi(t,X(t,x,\omega),\omega)=\xi(0,x)$, where $X(t,x,\omega)$ is the stochastic flow solution to the following SDEs
\begin{align*}
dX(t) = u(t,X(t))dt +\sum_k\sigma_k(X(t)) \circ dW^k(t).
\end{align*}
This fact can be proven by   the It\^o formula.   The Stratonovich form of the noise is essential, because the It\^o formula for this noise works as the chain rule, without second-order corrections. Furthermore, there is a derivation of models with stochastic transport term in \cite{Hol2015}, \cite{DriHol2018}, there are applications using the transport noise to model uncertainties, e.g.~\cite{CCHPS2018}, also the linear stochastic transport equation has been used as a toy model for turbulence, see e.g.~\cite{Gaw2008}, though we will not go into these directions here. The main feature of interest here is the fact that the transport noise preserves,  at least the heuristic level,  the $L^\infty(\mt^2)$ norm of the solution and also, when the coefficients $\sigma_k$ are divergence-free as here, the $L^p(\mt^2)$ norm for any $1\le p\le\infty$ (this can be seen for example via a priori bounds for the $L^p$ norm). As a consequence, the transport noise preserves the total mass of the vorticity and its positivity\footnote{ i.e. if $\xi_0\ge0$, then also $\xi_t\ge0$,} two properties that are crucial to apply the Schochet's argument \cite{Sch1995}. This is not the case for a generic additive or multiplicative noise as considered in the above mentioned papers. Indeed the additive noise does not guarantee positivity of the solution, while linear multiplicative noise does not guarantee mass conservation. We should admit though that additive  or certain multiplicative noise could be used for our purposes. Indeed what we really need to have the  Delort's result in the stochastic case  are  some uniform $\mc{M}(\mt^2)$ bounds of the vorticity and some uniform $L^\infty$ bounds, or even $L^p$ bounds in the line of the more general result in \cite{VecWu1993}, on the negative part of the vorticity. Now a linear multiplicative noise acting on  on the vorticity of the form $\sigma_k\xi \circ \,\,\dot{W}^k$ preserves the positivity and may also give some uniform bound on the mass; for additive noise, a bound from below of the vorticity may be proved via a transformation \`a la Doss-Sussmann from the SPDE to a random PDE.

Let us also  remark here that the stochastic vorticity equation \eqref{eqn: Euler stoch vorticity form-Intro} preserves  the enstrophy, that is the $L^2(\mt^2)$ norm of the vorticity solution $\xi$, but not the energy, that is the $L^2(\mt^2)$ norm of the velocity $u=K \ast \xi$. Indeed, the equation for the velocity field $u(t,x,\omega)$ can be written, in a purely  formal way,  as
\begin{align}
\begin{aligned}\label{eq:stochEulervel_intro}
&\partial_t u +(u\cdot\nabla) u +\sum_k(\sigma_k\cdot\nabla +D\sigma_k)u \circ \,\,\dot{W}^k = -\nabla p -\gamma,\\
&\divv u=0,
\end{aligned}
\end{align}
with unknown   pressure $p(t,x,\omega)$ and a time dependent constant $\gamma(t,\omega)$. The constant  $\gamma$
 is needed here in order  to keep $u$ with zero spatial mean.\footnote{Thus, as the pressure is often viewed as the Lagrange multiplier corresponding to the divergence zero constraint, the constant $\gamma$ could be seen as the
  the Lagrange multiplier corresponding to the  mean  zero constraint.}

 In the above  equation \eqref{eq:stochEulervel_intro}, a new  zero order term $(D\sigma_k) u \circ \,\,\dot{W}^k$  causes the $L^2$-norm of the velocity not to be preserved anymore.

Let us also mention that the transport noise has been used to show the so called regularization by noise phenomena for the stochastic  transport equations, see \cite{FlaGubPri2010} and \cite{FedFla2013} among many others.  Such results have been so far mostly limited to the linear cases, with a few positive results for other nonlinear hyperbolic cases, see e.g.~\cite{DelFlaVin2014}.
A  generalisation of the results from the paper   \cite{FlaGubPri2011} to the case of a more general, measure-valued vorticity provides new and challenging  difficulties.

Before embarking to describing contributions of our paper, let us  point out two peculiarities of the strategy used, in the stochastic case, in order to prove the existence of a  martingale solution, see for example \cite{BrzPes2001}. Analogously to the deterministic case, the strategy passes through the usual tightness and convergence argument. One  takes a sequence of approximants, shows a'priori uniform bounds for suitable moments and deduces  the tightness of the laws, then one  shows the stability of the equation in the limit. We want to emphasize  two facts, which we will use in our proof as well. Firstly, to prove the  tightness of the laws, say, in $C([0,T];X)$, where $X$ is a suitable  functional space on the torus $\mt^2$, one needs a uniform bound on the marginals and also a uniform bound in $C^\alpha([0,T];Y)$, where $Y$ is another functional space, typically a negative-order Sobolev space, containing $X$. The latter is the stochastic counterpart of the Aubin-Lions Lemma, used  since at least the paper  \cite{FlaGat1995}. Secondly, for the stability, one option is to exploit the Skorokhod representation theorem which allows one to pass from convergence in law to a.s.~convergence in the aforementioned space $C([0,T];X)$.

Now we are ready to describe in detail  our paper. The main result, i.e.  Theorem \ref{thm:main} extends the result by Delort \cite{Del1991} to  the stochastic setting with the transport type noise. To the best of our knowledge, no such extension has been studied before. Our proof combines the argument by Schochet \cite{Sch1995} to deal with the nonlinear term,   with the argument by e.g. \cite{BrzPes2001} to deal with the stochasticity, as explained before.  More details are given in Seciton \ref{sec:main_strategy}.

Two comments are in place. Firstly, as mentioned earlier, the transport type noise is responsible for the conservation  of mass and of positivity and thus allows to extend easily the Schochet argument concerning these aspects. On the contrary, a generic, additive or multiplicative, noise would fail at this point, though uniform bounds could work also for certain choices of noise.

Secondly, the Schochet argument does not immediately goes through with respect  the uniform $L^2(\mt^2)$ bounds on the velocity $u$. Indeed,  as we have seen above,  the equation  \eqref{eq:stochEulervel_intro} for the velocity filed $u$ does not preserve the energy. To overcome this difficulty, we assume a certain structure of
the coefficients $\sigma_k$ so that  uniform $L^2(\mt^2)$ bounds on $u$ can be still deduced.
These assumptions are not restrictive as they allow  to deal with the relevant cases, as for example the "locally isotropic" convariance matrices, see the discussion in Section \ref{sec:hp_noise}.

The  last comment we make in this Introduction concerns  an alternative approach  to  the proof. In \cite[Section 7]{BrzFlaMau2016}, we suggested an alternative proof of the well-posedness within the class of  bounded solutions, which is based on the Doss-Sussmann transformation.  To be precise,  $\psi$ is the stochastic flow solutions corresponding to the following SDEs on the torus $\mathbb{T}^2$:
\begin{align*}
d\psi_t = \sum_k\sigma_k(\psi_t) \circ dW^k_t,
\end{align*}
then the process  $\tilde{\xi}(t,x,\omega):= \xi(t,\psi(t,x,\omega),\omega)$ satisfies a random Euler-type PDE of the following type
\begin{align}
\begin{aligned}\label{eq:random_PDE}
&\partial_t \tilde{\xi} +\tilde{u} \cdot\nabla \tilde{\xi} =0,\\
&\tilde{u}(t,x,\omega) = D\psi^{-1}(t,x,\omega) \int_{\mt^2} K(\psi(t,x,\omega)-\psi(t,y,\omega)) \tilde{\xi}(t,y,\omega)dy,
\end{aligned}
\end{align}
where $\psi^{-1}$ is the inverse flow. Equation \eqref{eq:random_PDE} is a nonlinear transport type equation, where the kernel $K$ has been replaced by the above random kernel. In the case that $\sigma_k=e_k 1_{k\in\{1,2\}}$, where $e_k$ is the canonical basis on $\mr^2$, the PDE \eqref{eq:random_PDE} is exactly the Euler vorticity equation. Indeed in this case the stochastic Euler equations correspond simply to a random shift in Lagrangian coordinates. In the general case however, this is not the Euler vorticity equation, yet it enjoys similar properties since the random kernel has similar regularity properties, hence the well-posedness among bounded solutions can be derived applying the deterministic arguments to the random PDE \eqref{eq:random_PDE}. One may then wonder if a similar argument is possible here, namely using Schochet's arguments for the random PDE \eqref{eq:random_PDE}. In this context however, there are two problems at least. Firstly, even if we could just apply the Delort's and/or Schochet's result to equationm \eqref{eq:random_PDE}, at $\omega$ fixed, this would only give the existence of a measure-valued and $H^{-1}$-valued solution to \eqref{eq:random_PDE} at $\omega$ fixed, with no adaptedness property; so a compactness argument would probably be needed anyway. Secondly, from \eqref{eq:random_PDE} it is not immediately clear how to get a uniform $H^{-1}(\mt^2)$ bound on $\tilde{\xi}$. Moreover this type of arguments \`a la Doss-Sussmann does not work intrinsically on the stochastic PDE. Let us remark anyway that the argument may still give some hints: for example, it would be interesting to see if, using the equation for $\tilde{u}$ instead of the equation for $u$, one can get a uniform $H^{-1}(\mt^2)$ bound without the additional assumptions on $\sigma$ that are needed here. We leave this point for future research.

\section{The setting}

\subsection{Notation}

We recall some notation frequently used in the paper. We use the letters $t$, $x$, $\omega$ for a generic element in $[0,T]$, $\mt^2$, $\Omega$ respectively. The coordinates of $x$ are denoted by  $(x^1,x^2)$, while the partial derivatives are denoted by $\partial_{x_j}=D_j=\frac{\partial}{\partial x_j}$, $j=1,2$. Unless differently specified, the derivatives $\nabla$, $D$ and  $\Delta$ are meant to be  with respect to the space $x$. The Sobolev, not the Besov,  spaces are denoted by $\nH^{s,p}$, for $s \in \mathbb{R}$ and $p\in [1,\infty)$.  For $p=2$, we put $H^s = \nH^{s,2}$.

By $L_0^2(\mt^2,\mr^2)$ we denote the closed Hilbert subspace of $L^2(\mt^2,\mr^2)$ consisting of of  mean zero vector fields
and  by $\rH$ we denote the closed Hilbert subspace of $L_0^2(\mt^2,\mr^2)$ consisting of  divergence free  vector fields. Alternatively,
$\rH$  is equal to the closed Hilbert subspace of $L^2(\mt^2,\mr^2)$ consisting of  mean zero and divergence free  vector fields. We endow $\rH$ with the inner product from the space
$L^2(\mt^2,\mr^2)$ and introduce the Leray-Helmholtz projection $\Pi$ which by definition is the orthogonal projection from $L^2(\mt^2,\mr^2)$ onto $\rH$.

For a differentiable map $f=(f^1,\cdots,f^m):\mt^2\to \mr^m$, $m\in \mathbb{N}$,
we denote by $Df(a)$ the $m \times 2 $-matrix of the Fr{\`e}chet derivative $d_af \in \mathcal{L}(\mathbb{R}^2,\mathbb{R}^m)$ of $f$ at $a=(a_1,a_2)\in \mt^2$,  in the standard bases of $\mathbb{R}^2$ and $\mathbb{R}^m$,  i.e.  $Df(a) =(D_{j}f^i)=\begin{pmatrix} D_1 f^1 & D_2 f^1 \\
\ldots & \ldots\\
D_1 f^m & D_2 f^m
\end{pmatrix}$.  If $b: \mt^2 \to \mr^2$ then we put \[b \cdot \nabla f:=\bigl\{ \mt^2 \ni x \mapsto  \sum_{j=1}^2b_j(x) D_jf(x) \in \mr^m\}.
\]
If $m=1$, then by $\nabla f(a)$ we will understand the unique vector in $\mathbb{R}^2$ such that $\lb \nabla f(a),x\rb=d_af(x)$, $x\in \mathbb{R}^2$.
 By $a \otimes b$ we denote the tensor product of two vectors  $a,b\in \mathbb{R}^2$, i.e. a linear operator in $\mathbb{R}^2$ defined by $\mr^2 \ni x \mapsto a \lb b,x\rb \in \mr^2$. The matrix of
 in the standard bases of $\mr^2$ is $[a_ib_j]_{i,j=1}^2=\mat{a_1b_1}{a_1b_2}{a_2b_1}{a_2b_2}$.
 Identifying $a$ with a $2\times1$-matrix, we see that the last  matrix  is equal to $ab^{\mathrm{t}}$.

For function  spaces, we often put the input variables ($t$, $x$, $\omega$) as subscripts. For example, the spaces $L^2(\mt^2)$, $C([0,T])$, is denoted in short by $L^2_x$, $C_t$ respectively.  As another example, the space $C([0,T];(H^{-4}(\mt^2),w))$ of continuous functions on $[0,T]$, which take values in $H^{-4}(\mt^2)$ endowed with  the weak topology, is denoted in short by $C_t(H^{-4}_x,w)$. The symbol $f\ast g$ stands for the convolution in the space variable  using the group structure of  $\mt^2$,  between two functions or distributions $f$ and $g$ on $\mt^2$.

The space $\mc{M}_x=\mc{M}(\mt^2)$ is the space of finite $\mathbb{R}$-valued  \Radon measures on $\mt^2$. It is a Banach space, endowed with the total variation norm $\|\cdot\|_{\mc{M}_x}$ and is the dual of the space $C_x= C(\mt^2)$ of the space of continuous functions on $\mt^2$. The notation $\lan\mu,\varphi\ran$ will be used to denote the duality pairing  between a measure $\mu \in \mc{M}_x$ and a function $\varphi \in C_x$. The closed ball of center $0$ and radius $M$ on $\mc{M}_x$ is denoted by $\mc{M}_{x,M}$; the radius here refers to the  norm $\|\cdot\|_{\mc{M}_x}$. Following the above notation, the space $C([0,T];(\mc{M}_{x,M},w^\ast ))$ of continuous functions on $[0,T]$ taking values in the closed ball of radius $M$ in  $\mc{M}_x$ endowed with the weak-$^\ast$ topology, is denoted in short by  $C_t(\mc{M}_{x,M},w^\ast )$. We  also use notation $\mc{M}_{x,+}$ to denote  the set of non-negative finite \Radon measures on $\mt^2$. Finally, by  $\mc{M}_{x,M,+,\mathrm{no-atom}}$ we denote  the set of non-negative non-atomic \Radon measures with total mass $\le M$, $\mc{M}_{x,y}$ for the space of finite \Radon measures on $\mathbb{T}^{2\times 2}$.

Given a probability space $(\Omega,\mc{A},\mathbb{P})$ endowed with a filtration $\mathbb{F}=(\mc{F}_t)_t$, by the  symbol $\mathcal{B}\mathbb{F}$ we  denote  the progressively measurable $\sigma$-algebra associated with $\mathbb{F}$. The letter $C$ will be used for constants which may change from one line to another.


\subsection{Assumptions on the noise}\label{sec:hp_noise}


Here we give the assumptions on the noise coefficients $\sigma_k$.

\begin{condition}\label{assumption_sigma}
We assume the following assumptions.
\begin{itemize}
\item[\textbf{(A.1)}] For every $k\in \mathbb{N}$,  $\sigma_k \in C^1(\mt^2,\mr^2)\cap \rH$,  and
\begin{align*}
\sum_k\|\sigma_k\|_{C^1_x}^2<\infty.
\end{align*}
In particular, we assume that there exists a continuous function $a:\mt^2\times\mt^2\to \mathcal{L}(\mr^2,\mr^2)
$, called the infinitesimal covariance matrix, such that

\begin{align}
\label{eqn-a}
a(x,y) =\sum_k 
\sigma_k(x)\otimes \sigma_k(y)
, \;\;\; (x,y)\in \mt^2,
\end{align}
with the convergence in the space $C_{x,y}$. Moreover, function  $a$ is differentiable separately w.t.t.  $x$ and  $y$ and
\begin{align*}
\partial_{x_i}a(x,y) =\sum_k \partial_{x_i}\sigma_k(x) \otimes \sigma_k(y) ,\quad i=1,2,
\end{align*}
with the convergence in $C_{x,y}$. Analogously for the derivatives  w.r.t. $y$.
\item[\textbf{(A.2)}] There exists $c\ge 0$ such that the function $a$ defined above satisfies,
\begin{align}\label{eqn-axx}
a(x,x)=cI_2,\;\; x\in \mt,
\end{align}
where $I_2$ is the identity map in $\mr^2$. 
\item[\textbf{(A.3)}] The function $a$ defined above satisfies
\begin{align}\label{eqn-axy}
\partial_{y_i}a(x,y)\mid_{y=x} =0,\quad i=1,2,\quad  x\in\mt^2.
\end{align}
\end{itemize}
\end{condition}

\begin{remark}
\label{rem-second assumtion}
The equality \eqref{eqn-axx} in Assumption  \textbf{(A.2)} reads as follows:
\begin{align*}
&\sum_k\sigma_k^i(x)\sigma_k^j(x) = c\delta_{ij},\quad  x,\quad  i,j=1,2.
\end{align*}
The equality \eqref{eqn-axy} in Assumption  \textbf{(A.3)} reads as follows:
\begin{align}\label{eqn-axy'}
&\sum_k\sigma_k^i(x) \partial_{x_l} \sigma_k^j(x) = \partial_{y_l}a^{i,j}(x,y)\mid_{y=x} = 0,\quad  x,\quad  i,j,l=1,2.
\end{align}

Assumption \textbf{(A.2)} allows us  to simplify the It\^o-Stratonovich correction term  \[\frac12\sum_k \sigma_k\cdot\nabla[\sigma_k\cdot\nabla \xi]\]  in equation \eqref{eqn: Euler stoch vorticity form-Intro}. It cancels the first order term, i.e.
\[\frac12\sum_k (\sigma_k\cdot\nabla) \sigma_k \cdot\nabla \xi=
\frac 12 \sum_k \sum_{i,j} \sigma_k^i D_i \sigma_k^jD_j \xi=0
 \]  and  makes the second order term equal to a constant multiplied by the Laplacian, i.e.
\[\frac12 \sum_k \sum_{i,j} \sigma_k^i\sigma_k^jD_iD_j \xi= \frac12 \sum_k \tr [\sigma_k\otimes \sigma_k
D^2\xi]=\frac{c}{2}\Delta \xi.\] The latter equality is obvious and   former   follows from equality \eqref{eqn-axy'} with $l=i$. Indeed,
\begin{align*}
& \sum_k \sum_{i,j} \sigma_k^i D_i \sigma_k^jD_j \xi=\sum_{i,j} \bigl[   \sum_k  \sigma_k^i D_i \sigma_k^j\big] D_j \xi = \sum_{i,j} 0 \times  D_j\xi=0.
\end{align*}
Hence the Stratonovich integral  in equation \eqref{eqn: Euler stoch vorticity form-Intro} takes the following form
\begin{align}\label{eqn-Stratonovich integral}
\int_0^t \sum_k \sigma_k\cdot \nabla \xi \circ d{W}^k(s) =\sum_k \sigma_k\cdot \nabla \xi d{W}^k  - \frac{c}2 \int^t_0 \Delta \xi(x) \, dr.
\end{align}
Finally, we infer that  the vorticity equation \eqref{eqn: Euler stoch vorticity form-Intro}  can be written as the following  It\^o equation:
\begin{align}
\begin{aligned}\label{eq:stochEuler_intro_Ito}
&\partial_t \xi + u\cdot \nabla \xi + \sum_k \sigma_k\cdot \nabla \xi \,\,\dot{W}^k = \frac12 c\Delta\xi,\\
&u(t) = K \ast \xi(t), \;\;\; t \geq 0.
\end{aligned}
\end{align}

The assumption \ref{assumption_sigma} for the for well-posedness of bounded vorticity solutions  in  the present paper are   stronger than those  used  in \cite{BrzFlaMau2016}.
Indeed, in \cite{BrzFlaMau2016} only the first two assumptions are needed;  the second in a slightly weaker form. Moreover,  the second assumption is not essential  because the  first-order It\^o correction can be treated via lengthly computations. On the other hand, in the current paper,   it seems unclear from the proof whether Assumptions \textbf{(A.2)} and \textbf{(A.3)}  can be removed. Indeed these two assumptions guarantee that the equation for the velocity $u$ has no other  first order terms but   the transport one. The last property   in turn implies a key energy bound of solution, i.e.  the $L^2_x$ norm of the solution $u(t)$.
\end{remark}

The following Example provides a non-trivial example, i.e.  non constant, of vector fields   $\sigma_k$ satisfying Assumption \ref{assumption_sigma}.

\begin{example}
Let $\beta>3$ and define, with $\mathbb{Z}_{\ast}^2 :=\{ k\in\mathbb{Z}^2 \} \setminus \{0\} $ and,  $k^\perp=(-k_2,k_1)$ for $k=(k_1,k_2)\in\mathbb{Z}^2$,
\begin{align*}
\sigma_k(x) = (\cos(k\cdot x)+\sin(k\cdot x)) \frac{k^\perp}{|k|^\beta},\;\;\; x\in \mt^2, \;\; k \in\mathbb{Z}_{\ast}^2\setminus \{0\}.
\end{align*}
Since $\beta>3$, we infer that $\sum_k\|\sigma_k\|_{C^1_x}^2 <\infty$ and we can calculate that the function $a$ defined in \eqref{eqn-a} satisfies
\begin{align*}
a(x,y) &= \sum_{k\in \mathbb{Z}_{+}^2} \cos(k\cdot (x-y)) \frac{k^\perp \otimes k^\perp}
{|k|^{2\beta}},
\end{align*}
and \eqref{eqn-axx} with $c=\sum_{k\in\mathbb{Z}_{+}^2} \frac{2}{|k|^{2\beta-2}}$,
 where
\[
\mathbb{Z}_{+}^2:=\{ k\in\mathbb{Z}^2: k_1\ge0, k_2>0 \}.
\]
\end{example}

Note that, in this example, the infinitesimal covariance matrix $a$ is translation-invariant, i.e  $a(x,y) = a(x-y)$,  and even, i.e $a(x)=a(-x)$. Moreover, if the function $a$ is translation invariant and even, then it satisfies  Assumption \textbf{(A.3)}. 
Indeed
\begin{align*}
\partial_{y_i}a(x,y)\mid_{y=x} = -\partial_{z_i}a(z)\mid_{z=0} =0.
\end{align*}

We could \textit{essentially} include the class of isotropic infinitesimal covariance matrices in our setting. On $\mr^2$, an infinitesimal covariance matrix $a$ is called isotropic if it is translation- and rotation-invariant, that is $a(x,y)=a(x-y)$ for all $(x,y)$ and $R^{\mathrm{t}}a(Rx)R =a(x)$ for every rotation matrix $R$, and $a(0)=cI$ for some $c>0$. Rotation invariance implies that $a$ is even, take for instance $Rx=-x$. Hence a sufficiently regular isotropic matrix $a$ satisfies the Assumption \ref{assumption_sigma}. More precisely, if  $a$ is regular, then  one can find vector fields $\sigma_k$  safisfying Assumption \ref{assumption_sigma}.

The isotropy condition on $a$ means essentially that the noise $\sum_k \sigma_k(x)\circ \dot{W}_t$ is Gaussian, white in time, coloured and isotropic in space. Such a class has been considered in the mathematical and physical literature, see e.g. \cite{BaxHar1986}, \cite{Gaw2008}, even without the nonlinear term: for example, the same type of noise, but irregular in space, provides a simplified model for the study of passive scalars in a turbulent motion (see \cite{Gaw2008}, \cite{FalGawVer2001}, \cite{LeJRai2002}). Strictly speaking we are not allowed here to take an isotropic matrix as infinitesimal covariance matrix $a$ here, for the simple reason that the torus itself (considered as $[-1,1]^2$ with periodic boundary conditions) is not rotation-invariant. However, one may still take $a$ translation-invariant and rotation-invariant on a neighborhood of the diagonal $\{x=y\}$. Moreover the torus setting here is taken to avoid technicalities at infinity, but we believe a similar construction, including isotropic vector fields, would go through also in the full space case.

%

\subsection{The nonlinear term}

We focus now on the definition and properties of the nonlinear term $\lan \xi,u^\xi\cdot\nabla \varphi\ran$. For this, we introduce some important notation. By  $\mc{M}_x=\mc{M}(\mt^2)$ we denote  the space of $\mathbb{R}$-valued \Radon measures on $\mt^2$. It is a Banach space, endowed with the total variation norm $\|\cdot\|_{\mc{M}_x}$.  The space  $\mc{M}_x$ is the dual of the space $C_x= C(\mt^2)$ of  all continuous $\mathbb{R}$-valued functions on $\mt^2$,  see e.g. \cite[Theorem 7.4.1]{Dudley_2002}.  The space $\mc{M}_x$ is endowed with the Borel $\sigma$-field generated by the \textit{weak-$^\ast$} topology. The notation $\lan f, g \ran$ denotes either the $L^2$ product between two functions
in $L^2$ or a duality between a  distribution and a test function  on $\mt^2$.  Similarly,  $\lan\mu,\varphi\ran$ will be also used to denote the duality  between a measure $\mu \in \mc{M}_x$ and a function $\varphi \in C_x$.

It follows from  Lemma \ref{lem:Green_function} that if  $\xi \in L^{4/3}_x$, then $u^\xi \in \nH^{1,4/3}_x$ and thus  $u^\xi \in \ L^4_x$ by the Sobolev Embedding Theorem, see \cite[Theorem 6.5.1]{Bergh+Lofstrom_1976}.  Hence the product $\xi u^\xi \in L^1_x$ and the nonlinear term makes sense for a smooth function  $\varphi$. However, if $\xi$ does not belong  to  $L^{4/3}_x$, the product $\xi u^\xi$ is, in general,  not defined.
To overcome this difficulty, we use the following Lemma, due to Schochet \cite[Lemma 3.2 and discussion thereafter]{Sch1995}. For $\varphi \in C^2_x$, we define the function $F_\varphi:\mt^2\times\mt^2\to\mr$ by
\begin{align}
\label{eqn-F_varphi}
F_\varphi(x,y) := \frac12 K(x-y) \cdot (\nabla_x\varphi(x)-\nabla_y\varphi(y))1_{x\neq y},\;\; (x,y)\in \mt^2\times\mt^2.
\end{align}

\begin{lemma}\label{lem:Poupaud_trick}
The following properties are satisfied.
\begin{itemize}
\item If $\varphi \in C^2_x$, the the function $F_\varphi$ defined in \eqref{eqn-F_varphi} is bounded  by $C\|\varphi\|_{C^2_x}$ and continuous outside the diagonal $\{(x,y): x=y\}$. Moreover, the averages of $F_\varphi$ with respect to both $x$ and $y$ are equal to zero, i.e.
\begin{align}
\label{eqn-F average}
\int F_\varphi(x,y)  \rmd x =0 \,\,\forall y\in ,\quad \int F_\varphi(x,y)  \rmd y =0\,\,\forall x\in \mt^2.
\end{align}
\item For any measure $\xi \in \mc{M}_x$, the formula
\begin{align*}
C^{\infty}_x \ni \varphi \mapsto \lan N(\xi), \varphi \ran := \int_{\mt^2}\int_{\mt^2} F_\varphi(x,y) \xi(\rmd x)\xi(\rmd y) \in \mathbb{R},
\end{align*}
has a unique extension to a bounded liner functional on $H^4_x$, i.e.
$N(\xi) \in H^{-4}_x$> Moreover, there exists $C>0$ such that
\begin{align}\label{ineq-N}
\|N(\xi)\|_{H^{-4}_x} \le C\|\xi\|_{\mc{M}_x}^2, \;\;  \xi \in\mc{M}_x.
\end{align}
\item The map $N:\mc{M}_x\to H^{-4}_x$ defined above is Borel measurable, where  $\mc{M}_x$ is endowed with the Borel $\sigma$-algebra generated by the weak-$^\ast$ topology and
$H^{-4}_x=(H_x^4)^\ast$ is endowed with the Borel $\sigma$-field generated by the weak-$^\ast$ topology as well.
\item The map $N$ coincides with the nonlinear term in equation \eqref{eqn: Euler stoch vorticity form-Intro} on  $ L^{4/3}_x$, i.e.
\begin{align}
N(\xi) =\lan \xi,u^\xi\cdot\nabla \varphi\ran, \;\; \xi \in L^{4/3}_x,\; \varphi \in C^{\infty}_x. \label{eq:nonlin_poupaud}
\end{align}
\item For every $M>0$, the map $N$, restricted to the set $\mc{M}_{x,M,+,\mathrm{no-atom}}$ of non-negative non-atomic measures on $\mt^2$ with total mass $\le M$ and with values in $H^{-4}_x$, is continuous, i.e.  $\xi\mapsto \lan N(\xi),\varphi\ran$ is continuous for every $\varphi \in H^4$.  Moreover,  $N$ is the unique  continuous extension of the nonlinear term $L^{4/3}_x \ni \varphi \mapsto \lan \xi,u^\xi\cdot\nabla \cdot \ran \in H^{-4}_x$    to  $\mc{M}_{x,M,+,\mathrm{no-atom}}$.
\item If $\xi \in \mc{M}_x$ and $\alpha \in \mathbb{R}$, then $N(\xi+\alpha)=N(\xi)$.
\end{itemize}
\end{lemma}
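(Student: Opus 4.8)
The plan is to dispatch the easier, essentially analytic and algebraic, assertions first and to isolate the continuity statement as the genuine difficulty. I would begin with the pointwise properties of $F_\varphi$. Writing $K=\nabla^\perp G$, Lemma~\ref{lem:Green_function} gives the bound $|K(z)|\le C|z|^{-1}$ for the only singularity, at $z=0$, while the mean value theorem yields $|\nabla_x\varphi(x)-\nabla_y\varphi(y)|\le\|\varphi\|_{C^2_x}|x-y|$ for $\varphi\in C^2_x$; multiplying these, the singularity cancels, so $|F_\varphi(x,y)|\le C\|\varphi\|_{C^2_x}$ uniformly, and continuity off $\{x=y\}$ is clear since $K$ is smooth away from the origin. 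For the vanishing averages I would split $F_\varphi$ into $\frac12 K(x-y)\cdot\nabla\varphi(x)$ and $-\frac12 K(x-y)\cdot\nabla\varphi(y)$: the $x$-average of the second is $\nabla\varphi(y)\cdot\int_{\mt^2}K=0$ since $K$ is mean-zero, and the $x$-average of the first vanishes upon integrating by parts because $\divv K=0$; the symmetry $F_\varphi(x,y)=F_\varphi(y,x)$, coming from the oddness of $K$, then gives the $y$-average as well.

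With the uniform bound on $F_\varphi$ at hand, $|\lan N(\xi),\varphi\ran|\le\|F_\varphi\|_\infty\|\xi\|_{\mc{M}_x}^2\le C\|\varphi\|_{C^2_x}\|\xi\|_{\mc{M}_x}^2$, and the Sobolev embedding $H^4_x\hookrightarrow C^2_x$ on $\mt^2$ together with density of $C^\infty_x$ in $H^4_x$ upgrades this to $N(\xi)\in H^{-4}_x$ with $\|N(\xi)\|_{H^{-4}_x}\le C\|\xi\|_{\mc{M}_x}^2$. The identity on $L^{4/3}_x$ I would obtain by writing $\lan\xi,u^\xi\cdot\nabla\varphi\ran=\int\int K(x-y)\cdot\nabla\varphi(x)\,\xi(x)\xi(y)\rmd x\rmd y$ (absolutely convergent since $u^\xi\in L^4_x$ and $\xi\in L^{4/3}_x$), symmetrizing in $(x,y)$ through $K(y-x)=-K(x-y)$ and averaging the two representations, the Lebesgue-null diagonal making the indicator harmless. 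The shift-invariance $N(\xi+\alpha)=N(\xi)$ is then immediate from the vanishing averages, since every $\alpha$-term in the expansion of the double integral carries a factor $\int_{\mt^2}F_\varphi(\cdot,y)\rmd x$ or $\int_{\mt^2}F_\varphi(x,\cdot)\rmd y$. For Borel measurability it suffices, since $H^{-4}_x$ carries the weak-$^\ast$ Borel structure, to check that $\xi\mapsto\lan N(\xi),\varphi\ran=\int F_\varphi\,\rmd(\xi\otimes\xi)$ is measurable for fixed $\varphi$; the map $\xi\mapsto\xi\otimes\xi$ is weak-$^\ast$ Borel measurable (on product test functions it reads $\lan\xi,g\ran\lan\xi,h\ran$, which is measurable, and these are dense), and integrating the bounded Borel function $F_\varphi$ against it — approximating $F_\varphi$ by continuous functions on each ball $\mc{M}_{x,M}$ — preserves measurability.

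The crux, and the step I expect to be the main obstacle, is the continuity on $\mc{M}_{x,M,+,\mathrm{no-atom}}$, which is the Schochet--Poupaud argument. Fixing $\varphi$ and using the uniform $H^{-4}_x$ bound it suffices to take $\varphi\in C^\infty_x$ and prove $\int F_\varphi\,\rmd(\xi_n\otimes\xi_n)\to\int F_\varphi\,\rmd(\xi\otimes\xi)$ whenever $\xi_n\to\xi$ weak-$^\ast$ with $\xi_n,\xi\ge0$ non-atomic of mass $\le M$. Non-negativity lets me test against $1$ to see the masses converge and against product functions $g\otimes h$ (dense in $C_{x,y}$ by Stone--Weierstrass) to conclude $\xi_n\otimes\xi_n\to\xi\otimes\xi$ weakly as \emph{positive} measures on the compact space $\mt^{2\times2}$. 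Now $F_\varphi$ is bounded with discontinuity set inside the diagonal $\{x=y\}$, and the decisive point is that non-atomicity of $\xi$ forces $(\xi\otimes\xi)(\{x=y\})=\int\xi(\{x\})\,\xi(\rmd x)=0$; a portmanteau-type theorem for positive measures — weak convergence against a bounded integrand continuous off a limit-null set — then delivers the convergence, i.e.\ continuity of $N$ into $(H^{-4}_x,w^\ast)$. Both hypotheses are used in an essential way here: non-negativity to have genuine positive measures and invoke portmanteau, and non-atomicity to annihilate the diagonal where $F_\varphi$ may jump. Uniqueness of the continuous extension finally follows because each $\xi\in\mc{M}_{x,M,+,\mathrm{no-atom}}$ is a weak-$^\ast$ limit of mollified densities, which lie in $L^{4/3}_x$ and remain non-atomic, so $N$ is pinned down on a dense set by the nonlinear term through \eqref{eq:nonlin_poupaud}.
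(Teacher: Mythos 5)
Your proposal is correct and follows essentially the same route as the paper: the uniform bound on $F_\varphi$ via $|K(z)|\le C|z|^{-1}$ and the Lipschitz property of $\nabla\varphi$, the Sobolev embedding $H^4_x\hookrightarrow C^2_x$ for \eqref{ineq-N}, Borel measurability through $\xi\mapsto\xi\otimes\xi$ and integration of bounded Borel integrands, symmetrization via the oddness of $K$ for \eqref{eq:nonlin_poupaud}, and the Schochet--Poupaud continuity argument combining weak-$^\ast$ convergence of the product measures, the vanishing of $\xi\otimes\xi$ on the diagonal for non-atomic $\xi$, and the Portmanteau-type lemma for non-negative measures. The only cosmetic difference is that you spell out the density argument for uniqueness of the extension and the integration-by-parts proof of \eqref{eqn-F average}, which the paper leaves implicit.
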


Thanks to this Lemma, we will use the right-hand side of identity \eqref{eq:nonlin_poupaud} as the definition of the nonlinear term $\xi \nabla u^\xi$   in equation \eqref{eqn: Euler stoch vorticity form-Intro}. Note that  by Lemma \ref{lem:no_atoms} stated later, the set $\mc{M}_{x,M,+,\mathrm{no-atom}}$ contains the set  $\xi \in \mc{M}_{x,+}\cap H^{-1}_x$ we are interested in here. The last assertion of Lemma \ref{lem:Poupaud_trick} is a consequence of Lemma \ref{lem:continuity_mass}, while the other assertions are proved in  Appendix \ref{app-nonlinear term}.

\subsection{A definition of a measure-valued solution}

Now we can give the definition of a measure-valued solution to the stochastic vorticity equation. Again we give some notation. For any fixed $M>0$, $\mc{M}_{x,M}$ is the set of all finite $\mathbb{R}$-valued  \Radon measures $\mu$ on $\mt^2$ with total variation $\|\mu\|_{\mc{M}_x}\le M$. This will be preceded by presenting some additional notation.  Let us first observe that by the Banach-Alaoglu Theorem and  \cite[Theorem 3.28]{Brezis-2011}, the set  $\mc{M}_{x,M}$ endowed with the weak-$^\ast$ topology induced from  $\mc{M}_x$,  is a compact Polish space. This set is endowed  with the Borel $\sigma$-field  generated by this topology. Given a filtration $\mathbb{F}=(\mc{F}_t)_t$, we denote by  $\mathcal{B}\mathbb{F}$ the associated progressive $\sigma$-field.  By an $\mathbb{F}$-cylindrical Wiener process  we understand  a sequence $W=(W^k)_k$ of independent $\mathbb{F}$-Brownian motions.

The following  definition provides a rigorous   formulation of equation \eqref{eqn: Euler stoch vorticity form-Intro} in the It\^o form \eqref{eq:stochEuler_intro_Ito}.

\begin{definition}\label{def:sol}
Assume that  $M>0$ and $T>0$. Assume that the vector fields $\sigma_k$, $k\in \mathbb{N}$,  satisfy  Assumption \ref{assumption_sigma}. A weak distributional $\mc{M}_{x,M}$-valued solution to the vorticity equation \eqref{eqn: Euler stoch vorticity form-Intro} is an object \[(\Omega,\mc{A},\mathbb{F},\mathbb{P},W,\xi),\]
 where $\mathbb{F}=(\mc{F}_t)_{t\in [0,T]}$ is a filtration,
$(\Omega,\mc{A},\mathbb{F},\mathbb{P})$ is a filtered probability space satisfying the usual assumptions,  $W=(W^k)_k$ is an $\mathbb{F}$-cylindrical Wiener processon time interval $[0,T]$,
$\xi:[0,T]\times\Omega\to\mc{M}_{x,M}$ is $\mathcal{B}\mathbb{F}$ Borel measurable function and, $\mathbb{P}$-a.s.\footnote{The $\mathbb{P}$-exceptional set being independent of $t$.} the following equalities are satisfied
  in $H^{-4}_x$,
\begin{equation}
\label{eq:stochEulervort}
\xi_t = \xi_0 -\int^t_0 N(\xi_r) \rmd r -\sum_k \int^t_0 \sigma_k\cdot\nabla \xi_r \rmd W^k_r +\frac{c}2 \int^t_0 \Delta \xi_r \rmd r,\quad  t\geq 0.
\end{equation}
\end{definition}
\begin{remark}\label{rem-T}
The above definition as well as Theorem \ref{thm:main} are formulated on an arbitrary (but fixed) time interval $[0,T]$. One can change these to an infinite interval $[0,\infty)$, see for instance \cite{Brz+Ferr_2019}.
\end{remark}

Let us observe that by Lemmata \ref{lem:Poupaud_trick} and \ref{lem:H_Borel}  the map $\xi :[0,T]\times\Omega \to H_x^{-4}$  as well as  the integrands in \eqref{eq:stochEulervort} are $\mc{B}\mathbb{F}$ Borel measurable $H^{-4}_x$-valued maps.  Moreover, by inequality \eqref{ineq-N} we infer that
\begin{align}
\E \sum_k \int^T_0 \|\sigma_k\cdot\nabla \xi_r\|^2_{H^{-4}_x} \rmd r \le M^2 \sum_k\|\sigma_k\|_{C_x}^2.\label{eq:welldef_stoch_int}
\end{align}
Hence the deterministic Bochner integrals and the stochastic It\^o integral exists in the Hilbert space  $H^{-4}_x$.

Moreover, Lemma \ref{lem:stochEulervort_H} shows that \eqref{eq:stochEulervort} is equivalent to the following weak formulation, i.e. that  for every $\varphi \in C^\infty_x$,
\begin{align}
\begin{aligned}\label{eq:stochEulervort_test}
\lan \xi_t,\varphi \ran &= \lan \xi_0,\varphi\ran +\int^t_0 \lan N(\xi_r),\varphi \ran \rmd r +\sum_k \int^t_0 \lan \xi_r,\sigma_k\cdot\nabla \varphi \ran \rmd W^k_r \\
&\ \ \ +\frac12 \int^t_0 \lan \xi_r, c\Delta \varphi \ran \rmd r,\quad\text{for every }t,\quad \mathbb{P}-\text{a.s.},
\end{aligned}
\end{align}
with the $\mathbb{P}$-exceptional set being independent of $t$ and test functions $\varphi$.

We will sometimes say that a process $\xi$ is an $L^p_x$-valued solution of problem \eqref{eqn: Euler stoch vorticity form-Intro} iff it  has finite $L^m_{t,\omega}(L^p_x)$ norm for some $1\le m\le \infty$, where we identify a measure with its density if the latter  exists. Similarly we can define am   $H^{-1}_x$-valued solutions.

\section{The global existence for initial vorticity in the space  $\mc{M}_{x,+}\cap H^{-1}_x$}\label{sec:main_strategy}

The objective  of this section is to present the main result of this paper.

\begin{theorem}\label{thm:main}
Assume the Assumption \ref{assumption_sigma} on the coefficients $\sigma_k$.  Then for all    $M>0$ and  $T>0$,  and every  $\xi_0 \in \mc{M}_{x,M,+}\cap H^{-1}_x$, there exists a weak distributional $\mc{M}_{x,M}$-valued solution $(\Omega,\mc{A},\mathbb{F},\mathbb{P},(W^k)_k,\xi)$, where $\mathbb{F}=(\mc{F}_t)_{t\in [0,T]}$,  to the vorticity equation \eqref{eqn: Euler stoch vorticity form-Intro} such that   $\xi \in C_t(\mathcal{M}_{x,M,+},w^\ast )\cap L^2_{t,\omega}(H^{-1}_x)$ $\mathbb{P}$-a.s..
\end{theorem}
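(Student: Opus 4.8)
The plan is to follow the standard compactness-and-stability scheme for martingale solutions, as in \cite{BrzPes2001}, combined with the Schochet continuity of the nonlinear term \cite{Sch1995} encoded in Lemma \ref{lem:Poupaud_trick}. First I would regularise the datum: mollifying $\xi_0$ on $\mt^2$ produces $\xi_0^n \in C^\infty_x$ that are non-negative, have total mass $\le M$, and satisfy $\|\xi_0^n\|_{H^{-1}_x}\le C\|\xi_0\|_{H^{-1}_x}$ with $\xi_0^n \to \xi_0$ weakly-$^\ast$. For each $n$ I would invoke the bounded-vorticity well-posedness of \cite{BrzFlaMau2016} (applicable since Assumption \ref{assumption_sigma} is stronger than what is needed there) to obtain, on one fixed filtered probability space carrying the cylindrical Wiener process $W$, an $L^\infty_x$-valued solution $\xi^n$ which is in particular a weak distributional solution in the sense of Definition \ref{def:sol}. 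Because the transport noise preserves positivity and total mass, each $\xi^n_t$ stays in $\mc{M}_{x,M,+}$, so the marginals live in the weak-$^\ast$ compact Polish space $\mc{M}_{x,M}$.

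The crucial uniform bound is the one on the energy, equivalently the $H^{-1}_x$ norm of the vorticity, since $\|\xi^n_t\|_{H^{-1}_x}\simeq \|u^n_t\|_{L^2_x}$ with $u^n = K\ast\xi^n$; the $L^\infty$ and enstrophy bounds are useless here because $\|\xi_0^n\|_{L^2_x}$ blows up for a genuine measure. Since $\xi^n\in L^p_x$ for every $p$, Lemma \ref{lem:Green_function} gives $u^n\in \nH^{1,p}_x$, so that Itô's formula may be applied rigorously to $\|u^n_t\|_{L^2_x}^2$ using the velocity equation \eqref{eq:stochEulervel_intro}. Assumptions \textbf{(A.2)} and \textbf{(A.3)} are precisely what makes the stochastic first-order terms (the transport part $\sigma_k\cdot\nabla u$ and the zero-order part $D\sigma_k\,u$) combine, after the Itô--Stratonovich correction, into a controllable expression with no surviving genuine first-order term; the nonlinear transport term, the pressure and the mean-zero multiplier $\gamma$ all test to zero against $u^n$. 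Taking expectations and using Gronwall I would deduce $\sup_n \E\sup_{t\le T}\|u^n_t\|_{L^2_x}^2 \le C(\|\xi_0\|_{H^{-1}_x})$, hence a uniform bound on $\E\sup_{t\le T}\|\xi^n_t\|_{H^{-1}_x}^2$. For tightness I also need temporal regularity: testing \eqref{eq:stochEulervort} and using $\|N(\xi^n_r)\|_{H^{-4}_x}\le CM^2$ from \eqref{ineq-N}, together with $\|\sigma_k\cdot\nabla\xi^n_r\|_{H^{-4}_x}\le C\|\sigma_k\|_{C_x}M$, $\|\Delta\xi^n_r\|_{H^{-4}_x}\le CM$ and a Burkholder--Davis--Gundy estimate for the stochastic integral, yields a uniform bound on $\E\|\xi^n\|_{C^\alpha([0,T];H^{-4}_x)}^p$ for some $\alpha\in(0,1/2)$ and $p>1/\alpha$.

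Combining the weak-$^\ast$ compactness of $\mc{M}_{x,M}$ with the equicontinuity furnished by the $C^\alpha_t H^{-4}_x$ bound, a stochastic Aubin--Lions argument gives tightness of the laws of the pairs $(\xi^n,W)$ on $C_t(\mc{M}_{x,M},w^\ast)$ times an appropriate path space for the Wiener process. By Prokhorov and the Skorokhod representation theorem I would pass, along a subsequence, to new random variables $(\tilde\xi^n,\tilde W^n)$ on a common space, with the same laws, converging almost surely to a limit $(\tilde\xi,\tilde W)$. Positivity and the mass bound are weak-$^\ast$ closed, so $\tilde\xi$ is $\mc{M}_{x,M,+}$-valued; lower semicontinuity of the $H^{-1}_x$ norm together with Fatou's lemma transfer the uniform $H^{-1}_x$ bound, giving $\tilde\xi \in L^2_{t,\omega}(H^{-1}_x)$, whence by Lemma \ref{lem:no_atoms} the measure $\tilde\xi_t$ is non-atomic for almost every $(t,\omega)$. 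On this non-atomic non-negative class Lemma \ref{lem:Poupaud_trick} makes $N$ weak-$^\ast$ continuous, so $\int_0^t N(\tilde\xi^n_r)\rmd r \to \int_0^t N(\tilde\xi_r)\rmd r$ by dominated convergence using the uniform bound \eqref{ineq-N}; the Laplacian term is linear and passes trivially, and the stochastic integral converges by a standard lemma on convergence of stochastic integrals under joint almost sure convergence of integrands and integrators. Identifying $\tilde W$ as a cylindrical Wiener process for the filtration generated by $(\tilde\xi,\tilde W)$ then shows that the object $(\tilde\Omega,\tilde{\mc{A}},\tilde{\mathbb{F}},\tilde{\mathbb{P}},\tilde W,\tilde\xi)$ satisfies \eqref{eq:stochEulervort}, and by construction $\tilde\xi \in C_t(\mc{M}_{x,M,+},w^\ast)\cap L^2_{t,\omega}(H^{-1}_x)$, which is the asserted solution.

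The main obstacle is the uniform energy, that is $H^{-1}_x$, bound. As stressed in the Introduction, the velocity equation does not conserve energy for a generic noise, and the whole scheme collapses without this bound, because non-atomicity of the limit --- and hence continuity of the nonlinear term --- rests on the persistence of the $H^{-1}_x$ control. This is exactly the role of Assumptions \textbf{(A.2)} and \textbf{(A.3)}: carrying out the Itô computation on $\|u^n_t\|_{L^2_x}^2$ and verifying that these hypotheses annihilate the dangerous first-order stochastic terms is the technical heart of the argument. A secondary delicate point is the limit passage in the nonlinear term, legitimate only because the limit has been shown to remain, almost surely and for almost every time, inside the non-atomic cone where $N$ is continuous.
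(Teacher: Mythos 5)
Your proposal is correct and follows essentially the same route as the paper's proof: mollified initial data, bounded-vorticity solutions from \cite{BrzFlaMau2016}, uniform mass/positivity, energy (i.e.\ $H^{-1}_x$) and $C^\alpha_t(H^{-4}_x)$ bounds --- the energy bound being exactly where Assumptions \textbf{(A.2)} and \textbf{(A.3)} enter --- then tightness, Skorokhod representation, and passage to the limit using the Schochet continuity of $N$ on non-negative non-atomic measures together with Lemma \ref{lem:no_atoms}. The only noteworthy deviations are cosmetic: you recover the $L^2_{t,\omega}(H^{-1}_x)$ property of the limit by weak-$^\ast$ lower semicontinuity of the $H^{-1}_x$ norm plus Fatou, whereas the paper builds $(L^2_t(H^{-1}_x),w)$ into the tightness space and therefore must invoke the Skorokhod--Jakubowski theorem rather than the classical Skorokhod theorem you cite; and the It\^o computation for $\|u^n_t\|_{L^2_x}^2$ requires a preliminary mollification of the velocity equation (as in Appendix B of the paper), since that equation only holds in $H^{-1}_x$ where the squared $L^2_x$ norm is not continuous.
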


\begin{remark}
Note that, if $\xi$ is a solution to the vorticity equation \eqref{eqn: Euler stoch vorticity form-Intro} and $\alpha \in \mathbb{R} $ is a real constant, 
then $\xi+\alpha$ is also a solution, see Lemma \ref{lem:Poupaud_trick}. Hence the Theorem \ref{thm:main} can be generalized as follows. \\
 For every $\xi_0 \in \mc{M}_{x,M}\cap H^{-1}_x$ with negative part bounded by a constant $\alpha$, then there exists a weak distributional $\mc{M}_{x,M}$-valued solution $\xi$ to \eqref{eqn: Euler stoch vorticity form-Intro},  whose paths belong to  $C_t(\mathcal{M}_{x,M,+},w^\ast )\cap L^2_{t,\omega}(H^{-1}_x)$, with negative part bounded by $\alpha$,  for all $t$, $\mathbb{P}$-a.s..

This generalization is important because,  in particular,   if our departure point is the initial   velocity $u_0$, then, unless $\curl  (u_0) =0$,  $\curl  (u_0)$ cannot be non-negative, but it can be bounded from below.
\end{remark}

The strategy of the proof of Theorem \ref{thm:main} is  as follows. We begin by fixing $M>0$ and $T>0$.

\textbf{Compactness argument}. We take  a family $\xi^\epsilon$, $\eps>0$,  of solutions with regular bounded initial datum $\xi^\epsilon_0$. We will show that the family of laws of these solutions is  tight on a suitable
topological space,  via appropriate  a'priori bounds. This  will be accomplished in the following  steps.
\begin{enumerate}
\item Prove the non-negativity and a uniform $L^\infty_{t,\omega}(\mathcal{M}_{x,+})$ bound on $\xi^\eps$.  This will follow from the conservation of non-negativity and from the conservation of mass for the vorticity equation \eqref{eqn: Euler stoch vorticity form-Intro}.
\item Prove a uniform $L^2_{t,\omega}(H^{-1}_x)$ bound on $\xi^\eps$.  Since the $L^2_{t,\omega}(H^{-1}_x)$ norm of $\xi$ is equivalent to the $L^2_{t,\omega}(L^2_x)$ norm of the corresponding velocity $u^\eps:=u^{\xi^\eps}$,  we will  prove a uniform energy bound on $u^\eps$   by rewriting the vorticity equation \eqref{eqn: Euler stoch vorticity form-Intro}  in an equivalent form \eqref{eqn: Euler stoch velocity form} for the velocity $u^\eps$. Note that, contrary  to the deterministic case, the energy, i.e. the $L^2$ norm of the velocity $u^\eps$,  is not preserved. This is  due to the presence of an additional term $(\nabla \sigma_k)\cdot u \circ \,\dot{W}^k $ in the equation. To prove the energy bound, the assumptions on $\sigma_k$ play a crucial role.
\item Prove, for $\alpha<1/2$,  a uniform $L^2_\omega(C^\alpha_t(H^{-4}_x))$ bound on $\xi^\eps$. This will  follow from the Lipschitz bounds w.r.t the  time variable  of the Bochner  integrals in the vorticity equation \eqref{eqn: Euler stoch vorticity form-Intro} and the H\"older bound w.r.t the  time variable of  the $H^{-4}$-valued stochastic integrals.
\item Show tightness  of laws of $u^\eps$ on  $C_t(\mathcal{M}_{x,M},w^\ast )\cap (L^2_t(H^{-1}_x),w)$, where by $w^\ast$, respectively $w$,   we denote  the weak-$^\ast$ topology on $\mathcal{M}_{x,M}$, respectively  the weak topology on $L^2_t(H^{-1}_x)$).  This will follow from the previous uniform bounds.  In fact, we could   prove tightness simply in $C_t(\mathcal{M}_{x,M},w^\ast )$, without using the $L^2_t(H^{-1}_x)$ bound, but  our present proves to be useful in the convergence part.
\end{enumerate}

\textbf{Convergence argument}. We will show that any limit point $\xi$ of $\xi^\epsilon$, as $\eps \to 0$,  solves the vorticity equation \eqref{eqn: Euler stoch vorticity form-Intro}. This  will be accomplished in the following  steps.
\begin{enumerate}
\item Pass to an a.s. convergence.  By the Skorokhod-Jakubowski Theorem, see \cite{Jak1997} and \cite{Brz+Ondr_2011}, we have, up to a  subsequence and a choice of a new probability space, an a.s. convergence of  $\xi^\eps$ to some $\xi $ with trajectories  in $ C_t(\mathcal{M}_{x,M},w^\ast )\cap (L^2_t(H^{-1}_x),w)$.
\item Show that   $\xi$  satisfies \eqref{eq:stochEulervort}.  For the nonlinear term, we will use the Schochet approach and in particular  the continuity of the nonlinear term among non-negative non-atomic measures as in Lemma \ref{lem:continuity_mass} and the fact that $H^{-1}_x$ measures are non-atomic. For the stochastic term, we use the approximation of the stochastic integral via the Riemann sums as in \cite{BrzGolJeg2013}.
\end{enumerate}

The main result will follow from Lemma \ref{lem:limit_sol}, which will show that the limit $\xi$ of any subsequence satisfies \eqref{eq:stochEulervort}.

\section{Proof of the main Theorem \ref{thm:main} }

We fix $M>0$ and $T>0$ and the initial condition $\xi_0 \in \mc{M}_{x,M,+}\cap H^{-1}_x$. We also assume, for the whole section,  that the coefficients $\sigma_k$ satisfy
 Assumption \ref{assumption_sigma}.

\subsection{A priori bounds}\label{sec:apriori_bd}

For $\eps>0$, we take $\xi^\eps$ to be the $L^\infty_{t,x,\omega}$ solution to the stochastic vorticity equation \eqref{eq:stochEulervort}, with initial condition $\xi_0^\eps = \xi_0*\rho_\eps$, where $(\rho_\eps)_{\eps >0}$ is the standard approximation of identity  on $\mt^2$. The convolution is understood w.r.t. the group structure of $\mt^2$. Note that by inequality (4) in \cite[section 9.14]{Rudin-FA_1991},   $ \|\xi_0^\eps\|_{\mc{M}_x} \leq M$.

The existence and the uniqueness of such  solution follows from our joint paper  \cite{BrzFlaMau2016} with Flandoli. To be more precise,    \cite[Theorem 2.1 and Definition 2.12]{BrzFlaMau2016}  implies the existence and the  uniqueness of the stochastic flow $\Phi^\eps: [0,\infty)\times \mt^2\times \Omega \to \mt^2 $
such that
\begin{trivlist}
\item[(i)]
for almost every $\omega \in \Omega$, the map $[0,\infty)\times \mt^2 \ni (t,x) \mapsto \Phi^\eps(t,x,\omega) \in  \mt^2 $ is continuous,
\item[(ii)]
for every $x \in \mt^2$, the process
$\Phi^\eps(\cdot,\cdot,\omega): [0,\infty)\times \mt^2  \to \mt^2 $ is progressively measurable,
\item[(iii)]
for almost every $\omega \in \Omega$,  $\Phi^\eps(t,\cdot,\omega):  \mt^2  \to \mt^2 $ preserves the Lebuesgue
measure for every $t\geq 0$,
\item[(iv)]
and for every $x\in \mt^2$, the process $\Phi^\eps(t,x)$ is solution to the following  SDE
\begin{align*}
\rmd \Phi^\eps(t,x) = \int_{\mt^2} K(\Phi^\eps(t,x)-\Phi^\eps(t,y)) \xi^\eps_0(y) \rmd y \rmd t +\sum_k \sigma_k(\Phi^\eps(t,x)) \rmd W^k_t.
\end{align*}
\end{trivlist}

Moreover,  if we define  $\xi^\eps_t(\omega)$ to be  the image (i.e. the push-forward) of  measure $\xi_0^\eps$, see \cite{Dudley_2002},  by the map $\Phi(t,\cdot,\omega)$), i.e.
\begin{align}
\xi^\eps_t(\omega)=(\Phi(t,\cdot,\omega))_\# \xi^\eps_0, \;\; t\geq 0,
\label{eq:repr_formula}
\end{align}
 then,  by  \cite[Proposition 5.1]{BrzFlaMau2016} we infer that  for a.e.~$\omega$, for every $t \geq 0$, the random variable  $\xi^\eps_t(\omega)$ admits a density with respect to the Lebesgue measure, which belongs to  $L^\infty_{t,x,\omega}$ and, for every $\varphi \in C^\infty_x$, the process $\lan \xi^\eps_t,\varphi\ran$ is progressively measurable and satisfies \eqref{eq:stochEulervort_test}. It follows, see Remark \ref{rmk:Linfty_sol}, that, up to taking an indistinguishable version, such $\xi^\eps$ is a weak distributional $\mc{M}_{x,M}$-valued solution in the sense of Definition \ref{def:sol}.

\subsubsection{Bound in $L^\infty_{t,\omega}(\mc{M}_x)$}

We start with a uniform $L^\infty_{t,\omega}(\mc{M}_x)$ bound:

\begin{lemma}\label{lem:cons_mass}
For every $\eps>0$ fixed, for a.e.~$\omega$, we have
\begin{align*}
\sup_{t\in[0,T]}\|\xi^\eps_t\|_{L^\infty_x} \leq \sup_{t\in[0,T]}\|\xi^\eps_t\|_{\mc{M}_x}  \leq  \|\xi^\eps_0\|_{\mc{M}_x}  \|\xi_0\|_{\mc{M}_x}\le M.
\end{align*}
In particular, up to taking an indistinguishable version, $\xi^\eps$ is a $\mc{M}_{x,M}$-valued solution (in the sense of Definition \ref{def:sol}). Moreover, for a.e.~$\omega$, we have: for every $t$, $\xi^\eps_t\ge 0$ (that is, it is a non-negative measure).
\end{lemma}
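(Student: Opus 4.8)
The plan is to deduce all three assertions directly from the push-forward representation \eqref{eq:repr_formula}, $\xi^\eps_t(\omega)=(\Phi^\eps(t,\cdot,\omega))_\#\xi^\eps_0$, together with property (iii), namely that $\Phi^\eps(t,\cdot,\omega)$ preserves the Lebesgue measure. I would fix $\eps>0$ and work off the single $\mathbb{P}$-null set on which the flow fails to satisfy (i)--(iv); since this exceptional set does not depend on $t$, every estimate obtained below will hold for all $t\in[0,T]$ simultaneously, which is exactly what the $\sup_t$ requires.

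First I would settle non-negativity. As $\xi_0\in\mc{M}_{x,M,+}$ and the mollifier $\rho_\eps$ is non-negative, the density $\xi^\eps_0=\xi_0*\rho_\eps$ is a non-negative bounded function. The push-forward of a non-negative measure is non-negative, so for every Borel set $A$ one has $\xi^\eps_t(A)=\xi^\eps_0\big((\Phi^\eps_t)^{-1}(A)\big)\ge0$; hence $\xi^\eps_t\ge0$ for every $t$.

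Next comes the total-variation bound. Because $\xi^\eps_t\ge0$, its total variation norm equals its total mass, and total mass is invariant under push-forward, so $\|\xi^\eps_t\|_{\mc{M}_x}=\xi^\eps_t(\mt^2)=\xi^\eps_0(\mt^2)=\|\xi^\eps_0\|_{\mc{M}_x}$. Since $\xi_0$ and $\rho_\eps$ are non-negative with $\int\rho_\eps=1$, this last quantity is exactly $\xi_0(\mt^2)=\|\xi_0\|_{\mc{M}_x}\le M$ (the cited convolution inequality also gives $\le M$ directly). This yields $\sup_t\|\xi^\eps_t\|_{\mc{M}_x}\le M$. For the $L^\infty$ statement I would use that $\Phi^\eps_t$ is a Lebesgue-measure-preserving bijection of $\mt^2$: then $\xi^\eps_t$ is again absolutely continuous, with density $\xi^\eps_0\circ(\Phi^\eps_t)^{-1}$, as one checks by changing variables $y=\Phi^\eps_t(x)$ in $\int g(\Phi^\eps_t(x))\,\xi^\eps_0(x)\,\rmd x$ and using invariance of $\rmd x$. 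Composition with a measure-preserving bijection leaves the essential supremum unchanged, so $\|\xi^\eps_t\|_{L^\infty_x}=\|\xi^\eps_0\|_{L^\infty_x}<\infty$, i.e.\ the $L^\infty$ norm is conserved in time. Combining these facts, $\xi^\eps_t\in\mc{M}_{x,M}$ for every $t$ a.s., and together with the weak formulation \eqref{eq:stochEulervort_test} already verified in Section \ref{sec:apriori_bd}, one concludes, after passing to an indistinguishable version, that $\xi^\eps$ is an $\mc{M}_{x,M}$-valued solution in the sense of Definition \ref{def:sol}.

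I do not expect a serious analytic obstacle here: the representation formula and measure-preservation carry essentially the whole argument. The two points that will require genuine care are, first, the change-of-variables step for the density, which uses that $\Phi^\eps_t$ is a bijection with measurable inverse and not merely measure-preserving; and second, the order of quantifiers, namely that the bounds must hold for all $t$ off one common null set. The latter is guaranteed by the joint $(t,x)$-definition and continuity of the flow in (i), so the passage to $\sup_{t\in[0,T]}$ causes no difficulty.
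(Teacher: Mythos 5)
Your proof is correct, and for the non-negativity it coincides with the paper's argument (both read it off the push-forward representation \eqref{eq:repr_formula}). For the total-variation bound, however, you take the Lagrangian route: mass is invariant under push-forward, so $\|\xi^\eps_t\|_{\mc{M}_x}=\xi^\eps_t(\mt^2)=\xi^\eps_0(\mt^2)=\|\xi_0\|_{\mc{M}_x}\le M$. The paper explicitly acknowledges that this works but deliberately chooses the other route, ``PDEs in spirit'': test the weak formulation \eqref{eq:stochEulervort_test} with $\varphi\equiv 1$ to get $\int\xi^\eps_t=\int\xi^\eps_0$ for every $t$ off one null set, and then use non-negativity to identify this integral with the $\mc{M}_x$ norm. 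The two arguments buy slightly different things. Yours is self-contained at the level of the flow and also yields, essentially for free, the conservation of the $L^\infty_x$ norm of the density via the measure-preserving change of variables $y=\Phi^\eps_t(x)$ (a point the paper's proof does not address at all, even though the $L^\infty_x$ bound appears in the — somewhat garbled — statement of the lemma); the price is that you must invoke that $\Phi^\eps_t$ is a measure-preserving bijection with measurable inverse, which you correctly flag. The paper's PDE argument is more robust: as its follow-up remark notes, mass conservation for a divergence-free transport equation does not actually need positivity, so that proof survives in settings where the representation formula is unavailable. Your handling of the quantifier order (one common null set for all $t$, inherited from properties (i)--(iv) of the flow) and of the passage to an indistinguishable $\mc{M}_{x,M}$-valued version matches what the paper does via Remark \ref{rmk:Linfty_sol}.
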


\begin{proof}
The non-negativity follows directly from the assumption  that $\xi_0$ is a non-negative measure  and so $\xi_0^\eps$ are non-negative functions and from the representation formula \eqref{eq:repr_formula}. Concerning the bound, this also follows from the representation formula \eqref{eq:repr_formula}, but we prefer giving a  short,  PDEs in spirit,    proof. Using $\varphi\equiv 1$ as test function in \eqref{eq:stochEulervort_test}, we get, for a.e. $\omega$: for every $t$,
\begin{align*}
\int_{\mt^2}\xi^\eps_t \rmd x = \int_{\mt^2}\xi^\eps_0 \rmd x.
\end{align*}
Since $\xi^\eps_t\ge 0$, we get that $\|\xi^\eps_t\|_{L^\infty_{t,\omega}(\mc{M}_x)}\le \|\xi_0\|_{\mc{M}_x}$ for every $t$, for a.e.~$\omega$. Defining $\xi^\eps=0$ outside the exceptional set where the bound is not satisfied, we get that $\xi^\eps$ is in $C_t(\mc{M}_{x,M})$ and so is a $\mc{M}_{x,M}$-valued solution.
\end{proof}

\begin{remark}
In the proof of the previous Lemma, we used the fact that $\xi^\eps$ is positive (and so $\int_\mt^2 \xi^\eps$ is the $\mc{M}_x$ norm of $\xi^\eps$), but this is not essential. Indeed, the vorticity equation is a transport equation with divergence-free velocity field, therefore the mass is conserved at least under suitable regularity assumption on the velocity, which are satisfied here.
\end{remark}

\subsubsection{Equation for the velocity and for its energy}
\label{subsect-velocity}

In view of a uniform $H^{-1}_x$ bound on $\xi^\eps$, we will:
\begin{trivlist}
 \item[ 1)] get an equation for the velocity $u^\eps=u^{\xi^\eps}=K \ast \xi^\eps$,
 \item[ 2)] get an equation for the energy of $u^\eps$, that is the $L^2_x$ norm of $u^\eps$,
 \item[ 3)] conclude a uniform $L^2_x$ bound on $u^\eps$ by Lemma \ref{lem:Green_function}, this bound is equivalent to a uniform $H^{-1}_x$ bound on $\xi^\eps$, up to the  average of $\xi^\eps$ on $\mt^2$.
\end{trivlist}

Here we consider a solution $\xi$ to the vorticity equation \eqref{eq:stochEulervort}, with sufficiently integrability to include the (bounded) approximants $\xi^\epsilon$. As we have seen in the introduction, see formula \eqref{eq:stochEulervel_intro}, in this case the corresponding velocity process $u$ is a solution to the following
\begin{align}
\begin{aligned}\label{eqn: Euler stoch velocity form}
&\partial_t u +(u\cdot\nabla) u +\sum_k(\sigma_k\cdot\nabla +(D\sigma_k)^{\mathrm{t}})u \circ \,\dot{W}^k = -\nabla p -\gamma,\\
&\divv u=0,
\end{aligned}
\end{align}
where processes $p:[0,T]\times\mt^2\times \Omega\to\mr$ and $\gamma:[0,T]\times\Omega \to \mr^2$ are also unknown. A  rigorous formulation of this result can be stated  as follows.

\begin{lemma}\label{lem:velocity_eq}  Assume that  $2<p<\infty$.  Assume  that a process    $\xi$ belongs to $L^p_{t,\omega}(L^p_x)$  and that
  $\xi$ is  an $\mc{M}_{x,M}$-valued distributional solution to the stochastic vorticity equation \eqref{eqn: Euler stoch vorticity form-Intro}.   Define a process $u$ by
  \[ u(t,\cdot,\omega) =K \ast \xi (t,\cdot,\omega), \;\; (t,\omega) \in [0,\infty)\times \Omega.\]
   Then $u$ belongs to  $L^p_{t,\omega}(\nH^{1,p}_x)$ and it is a distributional solution to the stochastic Euler equation, i.e.  for every $t\geq 0$, $\mathbb{P}-\text{a.s.}$
\begin{align}
\begin{aligned}\label{eqn-stochEulervel}
u_t &= u_0 -\int^t_0 \Pi[(u_r\cdot\nabla) u_r] \rmd r\\
&\ \ \ -\sum_k \int^t_0 \Pi[\sigma_k\cdot\nabla u_r +(D\sigma_k)^T u_r] \rmd W^k(r) \\
&\ \ \ +\frac{c}2 \int^t_0 \, \Delta u_r \rmd r,\quad \mbox{ in } H_x^{-1} .
\end{aligned}
\end{align}
 Here $\Pi: H_x^{-1} \to H_x^{-1}$ is the unique bounded extension of the Leray-Helmholtz  projection $\Pi:L^2(\mt^2,\mr^2) \to \rH$.
\end{lemma}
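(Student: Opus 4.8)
The plan is to exploit the vorticity--velocity correspondence as a pair of mutually inverse operators: the Biot--Savart operator $\mc{K}\xi := K\ast\xi = \nabla^\perp\Delta_0^{-1}\xi$ and the operator $u\mapsto\curl u$, acting between mean zero scalar fields and mean zero divergence free vector fields. The idea is to \emph{apply} $\mc{K}$ term by term to the vorticity equation \eqref{eq:stochEulervort} and to identify each resulting term with the corresponding term of \eqref{eqn-stochEulervel}. First I would record the regularity. Since $2<p<\infty$, Lemma \ref{lem:Green_function} gives $\|u(t,\omega)\|_{\nH^{1,p}_x}\le C\|\xi(t,\omega)\|_{L^p_x}$ at a.e.\ $(t,\omega)$, whence $u\in L^p_{t,\omega}(\nH^{1,p}_x)$, together with $\divv u=0$ and that $u$ is mean zero. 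Because $p>2$ we have $\nH^{1,p}_x\hookrightarrow C_x$, so $u$ is bounded and all the quadratic and zero order expressions below are honest $L^p_x$ functions, in particular elements of $H^{-1}_x$.

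The argument rests on one operator identity and two pointwise vector-calculus identities. The operator identity is $\mc{K}\circ\curl=\Pi$ on $L^2_x$ (equivalently $L^p_x$) vector fields: writing $v=\nabla^\perp\psi+\nabla\phi$ one has $\curl v=\Delta\psi$, so $\mc{K}\curl v=\nabla^\perp\Delta_0^{-1}\Delta\psi=\nabla^\perp\psi=\Pi v$, and this extends from smooth fields to all of $L^p_x$ by boundedness of the operators involved. The two vector-calculus identities, valid because $\divv u=0$ and, by Assumption \ref{assumption_sigma}, $\divv\sigma_k=0$, are
\begin{align*}
\curl[(u\cdot\nabla)u] &= u\cdot\nabla(\curl u)=u\cdot\nabla\xi,\\
\curl[\sigma_k\cdot\nabla u+(D\sigma_k)^{\mathrm{t}}u] &= \sigma_k\cdot\nabla(\curl u)=\sigma_k\cdot\nabla\xi.
\end{align*}
Both follow by expanding $\curl=\partial_1(\cdot)^2-\partial_2(\cdot)^1$: the second order terms in $u$ reassemble into the advection of $\xi$, while the surviving first order terms collect into multiples of $\divv u$, respectively $\divv\sigma_k$, and into mixed second derivatives of $\sigma_k$ that cancel by symmetry; all of these vanish. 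The transport term $u\cdot\nabla\xi$ appearing here is exactly the one represented by $N(\xi)$ in \eqref{eq:stochEulervort}, via the definition \eqref{eq:nonlin_poupaud} after an integration by parts.

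With these identities in hand I would apply $\mc{K}$ to \eqref{eq:stochEulervort}. As $\mc{K}$ is a fixed bounded linear operator (gaining one derivative, hence mapping the relevant negative-order Sobolev spaces boundedly), it commutes with the Bochner integrals and, being deterministic, with the $H^{-4}_x$-valued It\^o integral; the stochastic term in the image is well defined in the target space exactly as in \eqref{eq:welldef_stoch_int}, using $\sum_k\|\sigma_k\|_{C^1_x}^2<\infty$. Term by term: $\mc{K}\xi_0=u_0$; the nonlinear term maps to $\mc{K}\curl[(u\cdot\nabla)u]=\Pi[(u\cdot\nabla)u]$; each noise term maps to $\mc{K}(\sigma_k\cdot\nabla\xi)=\Pi[\sigma_k\cdot\nabla u+(D\sigma_k)^{\mathrm{t}}u]$; and, since $\mc{K}=\nabla^\perp\Delta_0^{-1}$ commutes with $\Delta$, the viscous-type term maps to $\mc{K}\Delta\xi=\Delta\mc{K}\xi=\Delta u$. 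Substituting produces precisely \eqref{eqn-stochEulervel}, and the equality holds in $H^{-1}_x$ because each summand lies there by the regularity of the first paragraph; the $\Pi$ occurring is the extension to $H^{-1}_x$ of the Leray--Helmholtz projection, as in the statement.

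I expect the main obstacle to be the second vector-calculus identity, i.e.\ pinning down the correct velocity form of the transport noise. The bare transport $\sigma_k\cdot\nabla u$ does \emph{not} have curl equal to $\sigma_k\cdot\nabla\xi$; one must add exactly the zero order correction $(D\sigma_k)^{\mathrm{t}}u$ for the spurious first order and mixed terms to cancel, and it is precisely this extra term that later destroys energy conservation, cf.\ \eqref{eq:stochEulervel_intro}. The complementary difficulty is the function-space bookkeeping: verifying that $\mc{K}$ legitimately passes through the It\^o integral, that $\mc{K}\circ\curl=\Pi$ is invoked only on genuine $L^p_x$ vector fields, and that the final identity, which a priori lives in $H^{-3}_x$ after applying $\mc{K}$ to an $H^{-4}_x$ equation, actually holds in $H^{-1}_x$ thanks to the hypothesis $\xi\in L^p_{t,\omega}(L^p_x)$ with $p>2$.
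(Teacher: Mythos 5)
Your proposal is correct and follows essentially the same route as the paper: apply the Biot--Savart operator $K\ast$ term by term to the vorticity equation (upgraded to hold in $H^{-2}_x$ thanks to $\xi\in L^p_{t,\omega}(L^p_x)$), and convert each term using the identity $\curl[v\cdot\nabla w+(Dv)^{\mathrm{t}}w]=v\cdot\nabla\curl w$ for divergence-free $v$ together with $K\ast\curl=\Pi$. The only cosmetic difference is in the nonlinear term, where the paper first obtains $\Pi[u\cdot\nabla u+(Du)^{\mathrm{t}}u]$ and then discards $(Du)^{\mathrm{t}}u=\tfrac12\nabla|u|^2$ via $\Pi$, whereas you observe directly that $\curl[(u\cdot\nabla)u]=u\cdot\nabla\xi$; these are equivalent.
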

\textbf{Remark} Let us observe that the range $\Pi(H_x^{-1})$ is contained in  a closed subspace of   $H^{-1}_x$ consisting of  divergence-free zero-mean elements.

The proof is essentially based on the following heuristic identity, which rigorous proof is given in the Appendix.
If  the vector fields $v:\mt^2\to\mr^2$ and $w:\mt^2\to\mr^2$  are of sufficient regularity and $v$ is divergence-free, then
\begin{align*}
\curl [v\cdot\nabla w +(Dv)^{\mathrm{t}} w] = v\cdot\nabla \curl [w],
\end{align*}
where $Dv= \begin{pmatrix}                                D_1v_1 & D_1v_2 \\
                                D_2 v_1 & D_2 v_2
                              \end{pmatrix}$
 and ${}^{\mathrm{t}}$ is the transpose operation. Using the above, one can  pass from the velocity equation \eqref{eqn: Euler stoch velocity form} to the vorticity equation \eqref{eqn: Euler stoch vorticity form-Intro}.
From the equation for  the velocity  \eqref{eqn-stochEulervel} we can deduce  the equation for the expected valued of the energy:

\begin{lemma}\label{lem:u_norm}
Under the assumptions of Lemma \ref{lem:velocity_eq},
\begin{align}
\label{eq:u_norm}
\E\|u_t\|_{L^2_x}^2 &=
\E\|u_0\|_{L^2_x}^2 -2\E \int^t_0 \int_{\mt^2} u\cdot \Pi[(u \cdot\nabla) u] \rmd x \rmd r \\
\nonumber
& -c \E\int^t_0 \int_{\mt^2} |\nabla u|^2 \rmd x \rmd r \\
& +\E \int^t_0 \sum_k \int_{\mt^2} |\Pi[(\sigma_k\cdot\nabla +(D\sigma_k)^{\mathrm{t}}) u]|^2 \rmd x \rmd r,\;\;\;t\geq 0.
\nonumber
\end{align}
\end{lemma}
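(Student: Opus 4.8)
The plan is to read off \eqref{eq:u_norm} as the expectation of the It\^o formula for the square of the $L^2_x$-norm applied to the velocity process $u$, which by Lemma \ref{lem:velocity_eq} solves \eqref{eqn-stochEulervel}. I would work in the Gelfand triple $V := \nH^{1,p}_x \cap \rH \hookrightarrow \rH \hookrightarrow V^\ast$, where $\rH$ is identified with its own dual, so that the $V^\ast$--$V$ pairing extends the $L^2_x$ inner product. As noted after Lemma \ref{lem:velocity_eq}, every term on the right-hand side of \eqref{eqn-stochEulervel} takes values in the divergence-free, zero-mean subspace, so the equation genuinely lives in this triple. Abbreviating the drift by $b_r := -\Pi[(u_r\cdot\nabla)u_r] + \frac{c}2\Delta u_r$ and the diffusion coefficients by $\phi^k_r := -\Pi[\sigma_k\cdot\nabla u_r + (D\sigma_k)^{\mathrm{t}} u_r]$, equation \eqref{eqn-stochEulervel} reads $u_t = u_0 + \int_0^t b_r\rmd r + \sum_k\int_0^t \phi^k_r\rmd W^k_r$ in $V^\ast$.

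Granting the integrability conditions discussed below, the infinite-dimensional It\^o formula for the square of the norm in this variational setting applies: $u$ admits an $\rH$-valued continuous modification and, $\mathbb{P}$-a.s.,
\begin{align*}
\|u_t\|_{L^2_x}^2 = \|u_0\|_{L^2_x}^2 + 2\int_0^t \lan b_r, u_r\ran \rmd r + \sum_k\int_0^t \|\phi^k_r\|_{L^2_x}^2\rmd r + 2\sum_k\int_0^t \lan \phi^k_r, u_r\ran \rmd W^k_r.
\end{align*}
I would then take expectations. The stochastic integral is a martingale, after a standard localization by the stopping times $\tau_n := \inf\{t : \|u_t\|_{L^2_x} > n\}\wedge T$ and the bound $\sum_k \lan \phi^k_r, u_r\ran^2 \le \|u_r\|_{L^2_x}^2 \sum_k \|\phi^k_r\|_{L^2_x}^2$, so its expectation vanishes and one passes to the limit $n\to\infty$ by monotone and dominated convergence. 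It remains to evaluate the two deterministic pairings in $2\lan b_r,u_r\ran$. Since $u_r$ is divergence-free and $\Pi$ is the self-adjoint $L^2_x$-projection, $\lan \Pi[(u_r\cdot\nabla)u_r], u_r\ran = \int_{\mt^2} u_r\cdot\Pi[(u_r\cdot\nabla)u_r]\rmd x$, producing the first integral; integration by parts on the torus (no boundary terms) gives $\lan \Delta u_r, u_r\ran = -\int_{\mt^2}|\nabla u_r|^2\rmd x$, so the factor $2\cdot\frac{c}2$ yields $-c\int_{\mt^2}|\nabla u_r|^2\rmd x$; and $\|\phi^k_r\|_{L^2_x}^2 = \int_{\mt^2}|\Pi[(\sigma_k\cdot\nabla + (D\sigma_k)^{\mathrm{t}})u_r]|^2\rmd x$ is exactly the last term. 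Assembling these gives \eqref{eq:u_norm}.

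The remaining and most delicate point is to verify the integrability hypotheses of the abstract It\^o formula, namely $u\in L^p_{t,\omega}(V)$, $\sum_k\E\int_0^T\|\phi^k_r\|_{L^2_x}^2\rmd r<\infty$, and $b\in L^{p'}_{t,\omega}(V^\ast)$ with $p'=p/(p-1)$. The first holds by assumption. For the diffusion, $\Pi$ being an $L^2_x$-contraction and $\sigma_k\in C^1_x$ give $\sum_k\|\phi^k_r\|_{L^2_x}^2 \le C\bigl(\sum_k\|\sigma_k\|_{C^1_x}^2\bigr)\|u_r\|_{\nH^{1,2}_x}^2$, which is integrable since $\sum_k\|\sigma_k\|_{C^1_x}^2<\infty$ by Assumption \ref{assumption_sigma} and $\nH^{1,p}_x\hookrightarrow\nH^{1,2}_x$ on the torus for $p>2$. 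The Laplacian contributes $\|\Delta u_r\|_{V^\ast}\le C\|u_r\|_V$, which is harmless. The genuine obstacle is the nonlinear term: writing it in divergence form $\Pi\,\divv(u_r\otimes u_r)$ and using the two-dimensional Sobolev embedding $\nH^{1,p}_x\hookrightarrow L^\infty_x$ (valid for $p>2$) gives $\|\Pi[(u_r\cdot\nabla)u_r]\|_{V^\ast}\le C\|u_r\|_{\nH^{1,p}_x}^2$, whose time integrability demands a moment of order $2p'$ of $\|u_r\|_{\nH^{1,p}_x}$. This is controlled by the assumed $L^p_{t,\omega}(\nH^{1,p}_x)$ bound once $p$ is sufficiently large, and in particular it holds for the bounded approximants $\xi^\eps$ to which the Lemma is applied, since these lie in $L^\infty_{t,x,\omega}$ and hence $u^\eps\in\nH^{1,p}_x$ for every $p$ with all moments finite. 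For the borderline small $p$ one would instead first regularize $u$ by a spectral Galerkin truncation, apply the finite-dimensional It\^o formula, and pass to the limit using the same estimates.
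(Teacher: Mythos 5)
Your argument is correct in substance, but it follows a genuinely different route from the paper's. The paper proves \eqref{eq:u_norm} by mollification: it introduces $R^\delta u=\rho_\delta\ast u$, which maps $H^{-1}_x$ boundedly into $L^2_x$, applies $R^\delta$ to \eqref{eqn-stochEulervel} so that the equation becomes a genuine $L^2_x$-valued semimartingale identity, invokes the classical Hilbert-space It\^o formula of \cite[Theorem 4.32]{DaPZab2014} for $\|R^\delta u_t\|_{L^2_x}^2$, takes expectations, and finally lets $\delta\to 0$ using that $R^\delta$ commutes with $\nabla$ and $\Delta$ together with dominated convergence; the authors explicitly note that a direct It\^o argument fails because \eqref{eqn-stochEulervel} holds only in $H^{-1}_x$, on which $\|\cdot\|_{L^2_x}^2$ is not continuous. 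You instead circumvent that obstruction by working in the Gelfand triple $\nH^{1,p}_x\cap\rH\hookrightarrow\rH\hookrightarrow V^\ast$ and applying the variational (Krylov--Rozovskii/Pardoux) It\^o formula, which delivers the $\rH$-continuous modification and the energy identity in one stroke and spares you the commutation and $\delta\to 0$ bookkeeping; the price is the matched integrability exponents of the abstract theorem, which, as you correctly flag, pinch for $2<p<3$ (the drift pairing needs a third moment of $\|u\|_{\nH^{1,p}_x}$). This caveat is not a defect relative to the paper: the published proof implicitly uses comparable extra integrability (its martingale verification involves $\|u\|_{L^2_{t,\omega}(L^\infty_x)}$ and fourth-order moments of $\|u\|_{\nH^{1,2}_x}$), and in both arguments the lemma is only ever applied to the bounded approximants $\xi^\eps$, for which every moment is finite; your fallback via Galerkin truncation would likewise work but is only sketched.
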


One can get this equation formally from \eqref{eqn: Euler stoch velocity form} by applying the It\^o formula to $\|u\|_{L^2_x}^2$. However this is not possible rigorously, because the rigorous equation \eqref{eqn-stochEulervel} holds in $H^{-1}_x$ and the square of the $L^2_x$ norm is not continuous on $H^{-1}_x$. The rigorous proof of the formula \eqref{eq:u_norm} is based on a regularization argument and is postponed to the Appendix \ref{app-B}.

\subsubsection{Bound in $L^2_{t,\omega}(H^{-1}_x)$}

Now we give a uniform $L^2_{t,\omega}(H^{-1}_x)$ bound on the approximants $\xi^\eps$. This bound is not essential for the compactness argument, but it is essential for the convergence argument, as we will see.

\begin{lemma}\label{lem-Hm1_bound}
There exist $C>0$ such that for all $\eps \in (0,1]$ the following inequality  holds
\begin{align*}
\E \|\xi^\eps_t\|_{H^{-1}_x}^2\le C (\|\xi_0\|_{H^{-1}_x}^2 +\|\xi_0\|_{\mc{M}_x}^2),\quad \text{for every }t.
\end{align*}
\end{lemma}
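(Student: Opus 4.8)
The plan is to reduce the desired $H^{-1}_x$ bound on $\xi^\eps$ to an energy (i.e.\ $L^2_x$) bound on the associated velocity $u^\eps = K\ast\xi^\eps$, and then to close that energy bound by a Gr\"onwall argument in which the Assumptions \textbf{(A.2)} and \textbf{(A.3)} produce an exact cancellation of the most dangerous terms. First I would record the reduction. By Lemma \ref{lem:Green_function}, decomposing into Fourier modes, the nonzero modes of $\xi^\eps_t$ contribute a quantity comparable to $\|u^\eps_t\|_{L^2_x}^2$, while the zero mode is the spatial mean of $\xi^\eps_t$, which by conservation of mass (testing \eqref{eq:stochEulervort_test} against $\varphi\equiv 1$, as in Lemma \ref{lem:cons_mass}) equals the mean of $\xi^\eps_0$ and is bounded by $\|\xi^\eps_0\|_{\mc{M}_x}\le \|\xi_0\|_{\mc{M}_x}$. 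Hence $\E\|\xi^\eps_t\|_{H^{-1}_x}^2 \le C\,\E\|u^\eps_t\|_{L^2_x}^2 + C\|\xi_0\|_{\mc{M}_x}^2$. Since $\xi^\eps_0 = \xi_0\ast\rho_\eps$ and convolution with a probability density is a contraction on $H^{-1}_x$, we also have $\|u^\eps_0\|_{L^2_x}^2 \le C\|\xi^\eps_0\|_{H^{-1}_x}^2 \le C\|\xi_0\|_{H^{-1}_x}^2$, uniformly in $\eps$. It therefore suffices to prove $\E\|u^\eps_t\|_{L^2_x}^2 \le C\,\|u^\eps_0\|_{L^2_x}^2$ with $C$ independent of $\eps$.

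For this I would apply the energy identity of Lemma \ref{lem:u_norm} to $u = u^\eps$, which is legitimate since $\xi^\eps\in L^\infty_{t,x,\omega}\subset L^p_{t,\omega}(L^p_x)$ for every $p$. The nonlinear term vanishes: because $u$ is divergence-free, $\Pi u = u$ and $\Pi$ is a self-adjoint projection, so $\int_{\mt^2} u\cdot\Pi[(u\cdot\nabla)u]\,\rmd x = \int_{\mt^2} u\cdot(u\cdot\nabla)u\,\rmd x = \tfrac12\int_{\mt^2} u\cdot\nabla|u|^2\,\rmd x = 0$. For the martingale-correction (noise) term I would first drop the Leray projection using $\|\Pi v\|_{L^2_x}\le\|v\|_{L^2_x}$ and then expand $\sum_k\|(\sigma_k\cdot\nabla + (D\sigma_k)^{\mathrm t})u\|_{L^2_x}^2$ into three pieces. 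By \textbf{(A.2)}, i.e.\ \eqref{eqn-axx}, the leading piece is $\sum_k\|\sigma_k\cdot\nabla u\|_{L^2_x}^2 = c\|\nabla u\|_{L^2_x}^2$; by \textbf{(A.3)} in the form \eqref{eqn-axy'} the cross term $\sum_k\langle\sigma_k\cdot\nabla u,(D\sigma_k)^{\mathrm t}u\rangle$ vanishes, because it carries the factor $\sum_k\sigma_k^j\partial_i\sigma_k^l = 0$; and the remaining piece satisfies $\sum_k\|(D\sigma_k)^{\mathrm t}u\|_{L^2_x}^2 \le \big(\sum_k\|\sigma_k\|_{C^1_x}^2\big)\|u\|_{L^2_x}^2$, which is finite by \textbf{(A.1)}.

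The crucial point — and what I expect to be the main obstacle — is precisely this cancellation: the $+c\|\nabla u\|_{L^2_x}^2$ coming from the noise exactly annihilates the $-c\|\nabla u\|_{L^2_x}^2$ It\^o correction in \eqref{eq:u_norm}, which is what makes the bound possible at all; without \textbf{(A.2)}--\textbf{(A.3)} one would be left with an uncontrolled $\|\nabla u\|^2_{L^2_x}$ term of indefinite sign, exactly the difficulty flagged in Remark \ref{rem-second assumtion}. I would take care that the inequality $\|\Pi v\|\le\|v\|$ is applied only to the (nonnegative) noise term, so that the cancellation survives in the favorable direction. After this cancellation the energy identity collapses to the differential inequality $\E\|u_t\|_{L^2_x}^2 \le \E\|u_0\|_{L^2_x}^2 + C\int_0^t \E\|u_r\|_{L^2_x}^2\,\rmd r$, and Gr\"onwall's lemma gives $\E\|u^\eps_t\|_{L^2_x}^2 \le e^{CT}\|u^\eps_0\|_{L^2_x}^2$. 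Combining this with the reduction of the first paragraph yields $\E\|\xi^\eps_t\|_{H^{-1}_x}^2 \le C(\|\xi_0\|_{H^{-1}_x}^2 + \|\xi_0\|_{\mc{M}_x}^2)$ with $C = C(T,\sigma)$ independent of $\eps$, as required.
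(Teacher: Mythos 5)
Your proposal is correct and follows essentially the same route as the paper: reduce the $H^{-1}_x$ bound to the energy of $u^\eps=K\ast\xi^\eps$ via Lemma \ref{lem:Green_function} together with mass conservation for the zero mode, invoke the energy identity of Lemma \ref{lem:u_norm}, kill the nonlinear term by divergence-freeness, and use \textbf{(A.2)}--\textbf{(A.3)} (after dropping $\Pi$ by contractivity on the noise term only) to cancel the $c\|\nabla u\|_{L^2_x}^2$ contributions and close with Gr\"onwall. You correctly isolate the same key cancellation the paper relies on, so no further comment is needed.
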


In the proof of this lemma, we use crucially the assumptions \ref{assumption_sigma} on $\sigma_k$. We recall that $u^\eps=u^{\xi^\eps}=K \ast \xi^\eps$.

\begin{proof}
By Remark \ref{rmk:Borel_norm}, the $H^{-1}_x$ norm is a Borel function on $(\mc{M},w^\ast )$, therefore $\|\xi^\eps\|_{H^{-1}_x}$ and $\|u^\eps\|_{L^2_x}$ are progressively measurable and the expectations of their moments make sense. By Lemma \ref{lem:Green_function}, applied to $\xi^\eps_t-\int \xi^\eps_t(y) dy$, we have, for every $t$,
\begin{align*}
&\|\xi^\eps_t\|_{H^{-1}_x} \le C\|u^\eps_t\|_{L^2_x} + \left|\int_{\mt^2}\xi^\eps_t dx\right|,\\
&\|u^\eps_t\|_{L^2_x}\le C\|\xi^\eps_t\|_{H^{-1}_x}.
\end{align*}
By Lemma \ref{lem:cons_mass}, the $L^1$ norm of $\xi^\eps$ is uniformly bounded by $\|\xi_0\|_{\mc{M}_x}$. Hence it is enough to show, for every $t$,
\begin{align*}
\E \|u^\eps_t\|_{L^2_x}^2\le C\|u_0\|_{L^2_x}^2.
\end{align*}
We will show the above bounds by  using the velocity equation \eqref{eqn-stochEulervel}.

We start with equation \eqref{eq:u_norm} applied to $u^\eps$. Since $u^\eps$ is divergence-free, the nonlinear term in \eqref{eq:u_norm} vanishes. Indeed, we have the following train of equalities,
\begin{align*}
&\int_{\mt^2} u^\eps\cdot \Pi[(u^\eps \cdot\nabla) u^\eps] \rmd x = \int_{\mt^2} \Pi[u^\eps]\cdot (u^\eps \cdot\nabla) u^\eps \rmd x\\
&= \int_{\mt^2} u^\eps\cdot (u^\eps \cdot\nabla) u^\eps \rmd x = 0.
\end{align*}
For the term with $\sigma_k$, since the Leray-Helmholtz projection $\Pi$ is a contraction  in the space $L^2_x$, we infer that
\begin{align*}
&\sum_k \int_{\mt^2} |\Pi[(\sigma_k\cdot\nabla +(D\sigma_k)^{\mathrm{t}}) u^\eps]|^2 \rmd x \le \sum_k \int_{\mt^2} |(\sigma_k\cdot\nabla +(D\sigma_k)^{\mathrm{t}}) u^\eps|^2 \rmd x\\
&= \int_{\mt^2} [\sum_k|\sigma_k \cdot\nabla u^\eps|^2 +\sum_k|(D\sigma_k)^{\mathrm{t}} u^\eps|^2 +2\sum_{i,j,h} \sum_k\sigma_k^i \partial_{x_j} \sigma_k^h (u^\eps)^h \partial_{x_i} (u^\eps)^j ] \rmd x.
\end{align*}
Now we use the assumptions \ref{assumption_sigma}, precisely that $\sum_k\sigma_k^i\sigma_k^j = c\delta_{ij}$ and that $\sum_k\sigma_k^i \partial_{x^j} \sigma_k^h = 0$ for all $i,j,h$, with uniform (with respect to $x$) convergence in the series over $k$: we get
\begin{align*}
&\sum_k \int_{\mt^2} |\Pi[(\sigma_k\cdot\nabla +(D\sigma_k)^{\mathrm{t}}) u^\eps]|^2 \rmd x \le \int_{\mt^2} c|\nabla u^\eps|^2 \rmd x +\sum_k\|\sigma_k\|_{C^1_x} \int_{\mt^2} |u^\eps|^2 \rmd x .
\end{align*}
Putting all together, we obtain for every $t$
\begin{align*}
\E\|u^\eps_t\|_{L^2_x}^2 \le  \E\|u^\eps_0\|_{L^2_x}^2 +\sum_k\|\sigma_k\|_{C^1_x} \int^t_0 \E\|u^\eps_r\|_{L^2_x}^2 \rmd r.
\end{align*}
By applying the  Gronwall Lemma to the function $\E\|u^\eps_t\|_{L^2_x}^2$,  we conclude, that
\begin{align*}
\E\|u^\eps_t\|_{L^2_x}^2 \le  \E\|u^\eps_0\|_{L^2_x}^2 \exp[t\sum_k\|\sigma_k\|_{C^1_x}], \;\;\; t\geq 0.
\end{align*}
This  implies the desired bound since $u^\eps_0$ is deterministic and $\|u^\eps_0\|_{L^2_x}\le C\|u_0\|_{L^2_x}$. The proof of Lemma \ref{lem-Hm1_bound} is complete.
\end{proof}

\subsubsection{Bound in $L^m_\omega(C_t^\alpha(H^{-4}_x))$}

Now we prove a uniform $L^m_\omega(C_t^\alpha(H^{-4}_x))$ bound, for $m\ge 2$:

\begin{lemma}\label{lem:Hm4_bound}
Assume that $T>0$,  $ m \in [2,\infty)$ and  $0<\alpha<1/2$. Then   there  exists a constant $C=C(m,\alpha,T,M)$ such that for all $\eps \in (0,1]$,

\begin{align*}
\E \| \xi^\eps \|_{C^\alpha_t(H^{-4}_x)}^m \le C (\|\xi_0\|_{\mc{M}_x}^{2m} +\|\xi_0\|_{\mc{M}_x}^m).
\end{align*}
\end{lemma}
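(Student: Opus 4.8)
The plan is to split the increment $\xi^\eps_t-\xi^\eps_s$ according to the It\^o equation \eqref{eq:stochEulervort} into a (Bochner) drift part and a martingale part, estimate each separately in $H^{-4}_x$ for an arbitrary moment exponent, and then invoke the Kolmogorov continuity criterion for Hilbert-space-valued processes. Throughout I would use the uniform mass bound $\|\xi^\eps_r\|_{\mc{M}_x}\le\|\xi_0\|_{\mc{M}_x}$ from Lemma \ref{lem:cons_mass}, valid a.s.\ for every $r$. \textbf{Drift part.} Writing $A^\eps_t=-\int_0^t N(\xi^\eps_r)\,\rmd r+\tfrac{c}2\int_0^t\Delta\xi^\eps_r\,\rmd r$, the bound \eqref{ineq-N} of Lemma \ref{lem:Poupaud_trick} gives $\|N(\xi^\eps_r)\|_{H^{-4}_x}\le C\|\xi^\eps_r\|_{\mc{M}_x}^2\le C\|\xi_0\|_{\mc{M}_x}^2$, while the embedding $\mc{M}_x\hookrightarrow H^{-2}_x$ (valid on $\mt^2$ since $H^2_x\hookrightarrow C_x$) combined with the boundedness of $\Delta:H^{-2}_x\to H^{-4}_x$ gives $\|\Delta\xi^\eps_r\|_{H^{-4}_x}\le C\|\xi^\eps_r\|_{\mc{M}_x}\le C\|\xi_0\|_{\mc{M}_x}$. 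Both integrands being bounded in $H^{-4}_x$ uniformly in $r,\omega,\eps$, the map $A^\eps$ is Lipschitz in time, so that for $0\le s\le t\le T$ one obtains $\E\|A^\eps_t-A^\eps_s\|_{H^{-4}_x}^m\le C(\|\xi_0\|_{\mc{M}_x}^{2m}+\|\xi_0\|_{\mc{M}_x}^m)|t-s|^m$.

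\textbf{Martingale part.} Set $S^\eps_t=-\sum_k\int_0^t\sigma_k\cdot\nabla\xi^\eps_r\,\rmd W^k_r$, an $H^{-4}_x$-valued martingale. As in the computation leading to \eqref{eq:welldef_stoch_int}, one has $\sum_k\|\sigma_k\cdot\nabla\xi^\eps_r\|_{H^{-4}_x}^2\le\big(\sum_k\|\sigma_k\|_{C_x}^2\big)\|\xi^\eps_r\|_{\mc{M}_x}^2\le C\|\xi_0\|_{\mc{M}_x}^2$ a.s. For any exponent $q\ge2$, the Burkholder--Davis--Gundy inequality in $H^{-4}_x$ together with Jensen's inequality in the time variable yields $\E\|S^\eps_t-S^\eps_s\|_{H^{-4}_x}^q\le C_q|t-s|^{q/2-1}\int_s^t\E\big(\sum_k\|\sigma_k\cdot\nabla\xi^\eps_r\|_{H^{-4}_x}^2\big)^{q/2}\,\rmd r\le C_q\|\xi_0\|_{\mc{M}_x}^q|t-s|^{q/2}$.

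\textbf{Conclusion via Kolmogorov.} Combining the two parts and using $|t-s|^q\le T^{q/2}|t-s|^{q/2}$, for every $q\ge2$ and $0\le s\le t\le T$ one gets $\E\|\xi^\eps_t-\xi^\eps_s\|_{H^{-4}_x}^q\le C(q,T)(\|\xi_0\|_{\mc{M}_x}^{2q}+\|\xi_0\|_{\mc{M}_x}^q)|t-s|^{q/2}$. The Kolmogorov continuity theorem for $H^{-4}_x$-valued processes then produces, for any $\alpha<1/2-1/q$, a bound $\E\|\xi^\eps\|_{C^\alpha_t(H^{-4}_x)}^q\le C(q,\alpha,T)(\|\xi_0\|_{\mc{M}_x}^{2q}+\|\xi_0\|_{\mc{M}_x}^q)$; the Hölder-continuous version coincides with the given $\xi^\eps$, which by Lemma \ref{lem:cons_mass} already has weak-$^\ast$ continuous paths in $\mc{M}_{x,M}$. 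The freedom in $q$ is precisely the crux: to reach an arbitrary $\alpha<1/2$ while controlling the $m$-th moment, I would fix $\alpha$ and choose $q=q(m,\alpha)\ge m$ so large that $\alpha<1/2-1/q$; applying the previous line with this $q$ and then Jensen's inequality in $\omega$ (the base space being a probability space), together with the subadditivity of $x\mapsto x^{m/q}$, gives $\E\|\xi^\eps\|_{C^\alpha_t(H^{-4}_x)}^m\le\big(\E\|\xi^\eps\|_{C^\alpha_t(H^{-4}_x)}^q\big)^{m/q}\le C(m,\alpha,T,M)(\|\xi_0\|_{\mc{M}_x}^{2m}+\|\xi_0\|_{\mc{M}_x}^m)$, which is the claim. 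The only real (and mild) obstacle is this last step: for a fixed small $m$ the estimate $|t-s|^{q/2}$ alone only gives Hölder exponents below $1/2-1/m$, so one must boost the moment exponent and descend back by Jensen in order to capture every $\alpha<1/2$.
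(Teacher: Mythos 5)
Your proposal is correct and follows essentially the same route as the paper: decompose the increment via the It\^o equation \eqref{eq:stochEulervort}, bound the drift terms using \eqref{ineq-N} and the $\mc{M}_x\to H^{-4}_x$ estimates together with the uniform mass bound of Lemma \ref{lem:cons_mass}, treat the martingale part with the Burkholder--Davis--Gundy inequality to get the $|t-s|^{m/2}$ modulus, and conclude by the Kolmogorov criterion. The one point where you go beyond the paper is the final boosting step (choosing $q\ge m$ with $\alpha<1/2-1/q$ and descending by Jensen and the subadditivity of $x\mapsto x^{m/q}$): the paper instead simply fixes $m$ large enough that $\alpha<1/2-1/m$, so your argument is the more complete one for covering every $m\in[2,\infty)$ as the statement claims.
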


\begin{proof} Let us  choose and fix $\alpha \in (0,\frac12)$.  Choose then $m$ such that $\alpha<\frac12-\frac1m$.

Note that, by Remark \ref{rmk:Borel_norm}, $\omega\mapsto \|\xi^\eps\|_{C_t^\alpha(H^{-4}_x)}$ is measurable. Using the equation \eqref{eq:stochEulervort}, we get, for all $s, t \in [0,T]$ such that $s\leq t$,
\begin{align*}
&\E \| \xi^\eps_t - \xi^\eps_s \|_{H^{-4}_x}^m\\
&\le C\E \left\| \int^t_s N(\xi^\eps_r) \rmd r \right\|_{H^{-4}_x}^m\\
&\ \ \ +C\E \left\| \sum_k \int^t_s \sigma_k\cdot\nabla \xi^\eps_r \rmd W^k \right\|_{H^{-4}_x}^m\\
&\ \ \ +Cc^m\E \left\| \int^t_s \Delta \xi^\eps_r \rmd r \right\|_{H^{-4}_x}^m.
\end{align*}
By Lemma \ref{lem:Poupaud_trick}, we get the following inequality for the   for the nonlinear term
\begin{align*}
\E \left\| \int^t_s N(\xi^\eps_r) \rmd r \right\|_{H^{-4}_x}^m \le C(t-s)^m \|\xi^\eps\|_{L^\infty_{t,\omega}(\mc{M}_x)}^{2m}.
\end{align*}
What concerns  the stochastic integral, by the Burkholder-Davis-Gundy inequality and Lemma \ref{lem:H_Borel}  we infer that
\begin{align*}
&\E \left\| \sum_k \int^t_s \sigma_k\cdot\nabla \xi^\eps_r \circ \rmd W^k \right\|_{H^{-4}_x}^m\\
&\le C \E \left( \sum_k \int^t_s \|\sigma_k\cdot\nabla \xi^\eps_r \|_{H^{-4}_x}^m \rmd r \right)^{m/2}\\
&\le C(t-s)^{m/2} \left(\sum_k\|\sigma_k\|_{C_x}^2\right)^{m/2} \|\xi^\eps\|_{L^\infty_{t,\omega}(\mc{M}_x)}^m
\end{align*}
Finally,  for the second order term,  again by Lemma \ref{lem:H_Borel}  we deduce that
\begin{align*}
\E \left\| \int^t_s \Delta \xi^\eps_r \rmd r \right\|_{H^{-4}_x}^m\le C \|\xi^\eps\|_{L^\infty_{t,\omega}(\mc{M}_x)}^m (t-s)^m
\end{align*}
We put all together and we recall the a priori bound on $\|\xi^\eps\|_{L^\infty_{t,\omega}(\mc{M}_x)}$ in Lemma \ref{lem:cons_mass}: we obtain
\begin{align*}
\E \| \xi^\eps_t - \xi^\eps_s \|_{H^{-4}_x}^m \le C (t-s)^{m/2} (\|\xi_0\|_{\mc{M}_x}^m +\|\xi_0\|_{\mc{M}_x}^{2m}),\;\; 0\leq s\leq t\leq T,
\end{align*}
where the constant $C$ depends on $\sum_k\|\sigma_k\|_{B_x}^2$ and on $c$. By the Kolmogorov criterion, or the Sobolev embedding  as in \cite{DaPZab2014},  recalling that $\xi^\eps$ is already continuous as $H^{-4}_x$-valued process, we infer that
\begin{align*}
\E \| \xi^\eps \|_{C^\alpha_t(H^{-4}_x)}^m \le C(\|\xi_0\|_{\mc{M}_x}^m +\|\xi_0\|_{\mc{M}_x}^{2m}).
\end{align*}
The proof is complete.
\end{proof}

\subsection{Tightness}
Let us recall  that $M>0$ is fixed such that $\|\xi_0\|_{\mc{M}_x}\le M$. We also fix $T>0$, see Remark \ref{rem-T}. Whenever we use subscript ${}_t$ we mean that the corresponding functions are defined on the closed time interval $[0,T]$.

In this section we prove the tightness of laws on $C_t(\mc{M}_{x,M},w^\ast ) \cap (L^2_t(H^{-1}_x),w)$ of the family of processes  $(\xi^\eps)_{\eps \in (0,1]}$.

We recall that $(\mc{M}_{x,M},w^\ast )$ is metrizable with the distance $d_{\mc{M}_{x,M}}(\mu,\nu) = \sum_j 2^{-j} |\lan \mu-\nu, \varphi_j \ran|$, therefore $C_t(\mc{M}_{x,M},w^\ast )$ is metrizable as well, see Remark \ref{rmk:continuity_weak*} with $X=C_x$. The intersection space $C_t(\mc{M}_x,w^\ast )\cap (L^2_t(H^{-1}_x),w)$ is defined as the set consisting of those elements  of $C_t(\mc{M}_x,w^\ast )$ which have  finite $L^2_t(H^{-1}_x)$ norm.
On this set, which is a subspace of $C_t(\mc{M}_x,w^\ast )$, the topology is induced by the $C_t(\mc{M}_x,w^\ast )$  and the $(L^2_t(H^{-1}_x),w)$ topologies, i.e.
it is the weakest topology such that the natural
embeddings to  $C_t(\mc{M}_x,w^\ast )$  and  $(L^2_t(H^{-1}_x),w)$ are continuous. It follows that a function from another topological space with values in the intersection space is continuous if and only if it is composition with those two natural embeddings are continuous. Moreover, one can show that a function from another measurable  space with values in the intersection space is Borel measurable if and only if
it is composition with those two natural embeddings are Borel measurable.

Let us finish by pointing out that by Lemma \ref{lem:equiv_topol},  the Borel $\sigma$-algebra generated by $(L^2_t(H^{-1}_x),w)$ coincides with the Borel $\sigma$-algebra generated by the strong topology on $L^2_t(H^{-1}_x)$,

Note that, by Lemmas \ref{lem:cons_mass} and \ref{lem-Hm1_bound}, for any $\eps>0$, $\xi^\eps_t$ takes values in $\mc{M}_{x,M}\cap H^{-1}_x$ for every $t$, $\mathbb{P}$-a.s. and so for every $\omega$ up to taking an indistinguishable version. Hence, by Lemma \ref{lem:C_Lebesgue_meas}, $\xi$ is Borel measurable as $C_t(\mc{M}_{x,M},w^\ast )$-map and as $(L^2_t(H^{-1}_x),w)$-valued maps, i.e. both spaces being endowed with their Borel $\sigma$-algebras. Hence, by the above we infer that   $\xi^\eps$ is Borel measurable as a  $C_t(\mc{M}_{x,M},w^\ast ) \cap (L^2_t(H^{-1}_x),w)$-valued map, i.e.
$\xi^\eps$ is a  $C_t(\mc{M}_{x,M},w^\ast ) \cap (L^2_t(H^{-1}_x),w)$-valued random variable.

\begin{lemma}\label{lem:tightness}
 The  family of laws of $(\xi^\eps)_{\eps \in (0,1]}$ is tight on $C_t(\mc{M}_{x,M},w^\ast ) \cap (L^2_t(H^{-1}_x),w)$.
\end{lemma}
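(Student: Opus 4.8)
The plan is to produce, for every $\delta>0$, a set $K_\delta$ that is compact in the intersection space $C_t(\mc{M}_{x,M},w^\ast)\cap(L^2_t(H^{-1}_x),w)$ and satisfies $\mathbb{P}(\xi^\eps\in K_\delta)\ge 1-\delta$ uniformly in $\eps\in(0,1]$. I would build $K_\delta$ as an intersection of a set compact in $C_t(\mc{M}_{x,M},w^\ast)$ with a set compact in $(L^2_t(H^{-1}_x),w)$, controlling each piece by one of the a priori bounds already established, and finally checking that the intersection is compact for the intersection topology.

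For the first factor, the key observation is that the embedding $\mc{M}_x\hookrightarrow H^{-4}_x$ is compact, being the adjoint of the compact Sobolev embedding $H^4_x\hookrightarrow C_x$. Hence on the weak-$^\ast$ compact ball $\mc{M}_{x,M}$ the weak-$^\ast$ topology coincides with the strong $H^{-4}_x$ topology (weak-$^\ast$ convergence yields relative $H^{-4}_x$-compactness, and any strong limit agrees with the weak-$^\ast$ limit, while conversely $H^4_x$ is dense in $C_x$), so that $C_t(\mc{M}_{x,M},w^\ast)$ is the space of continuous paths into the compact metric space $(\mc{M}_{x,M},\|\cdot\|_{H^{-4}_x})$. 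For $R>0$ I set
\[
K_R:=\{\xi\in C_t(\mc{M}_{x,M},w^\ast):\ \|\xi\|_{C^\alpha_t(H^{-4}_x)}\le R\}.
\]
The paths in $K_R$ take values in the fixed compact set $\mc{M}_{x,M}$ (by Lemma \ref{lem:cons_mass}) and are equi-H\"older, hence equicontinuous, in $H^{-4}_x$; by the Arzel\`a--Ascoli theorem $K_R$ is compact in $C_t(\mc{M}_{x,M},w^\ast)$. By Lemma \ref{lem:Hm4_bound} and the Chebyshev inequality, $\mathbb{P}(\xi^\eps\notin K_R)\le C R^{-m}$ uniformly in $\eps$, so $K_R$ carries mass close to $1$ for $R$ large.

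For the second factor, I would use that $L^2_t(H^{-1}_x)$ is a separable Hilbert space, so its closed balls $B_{R'}:=\{f:\ \|f\|_{L^2_t(H^{-1}_x)}\le R'\}$ are weakly compact and metrizable in the weak topology, i.e. compact in $(L^2_t(H^{-1}_x),w)$. Integrating the bound of Lemma \ref{lem-Hm1_bound} in time gives $\E\|\xi^\eps\|_{L^2_t(H^{-1}_x)}^2\le C$ uniformly in $\eps$, whence $\mathbb{P}(\xi^\eps\notin B_{R'})\le C(R')^{-2}$ by Chebyshev. Given $\delta>0$, I then fix $R,R'$ so that each exceptional probability is at most $\delta/2$ and set $K_\delta:=K_R\cap B_{R'}$, so that a union bound yields $\mathbb{P}(\xi^\eps\notin K_\delta)\le\delta$.

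The remaining, and main, point is to check that $K_\delta$ is genuinely compact in the intersection topology rather than merely in each factor. Recalling that this topology is the initial one for the two embeddings, the intersection space embeds homeomorphically as the diagonal of the product $C_t(\mc{M}_{x,M},w^\ast)\times(L^2_t(H^{-1}_x),w)$, in which $K_R\times B_{R'}$ is compact by Tychonoff. I would conclude by showing that the diagonal is closed inside this product: if $\xi_n\to f$ in $C_t(\mc{M}_{x,M},w^\ast)$ and $\xi_n\to g$ weakly in $L^2_t(H^{-1}_x)$, then testing against products $\theta(t)\varphi(x)$ with $\theta\in C([0,T])$ and $\varphi\in C^\infty_x$ forces $f=g$ for a.e.\ $t$, so the two limits represent the same element of the intersection space. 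Hence $K_\delta$ is closed in the compact set $K_R\times B_{R'}$ and therefore compact. This identification of the two limits is the delicate step; the abstract statements on the intersection topology recorded earlier (e.g.\ Lemma \ref{lem:equiv_topol}) can be invoked to streamline it.
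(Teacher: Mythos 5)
Your proof is correct, and its overall architecture coincides with the paper's: the same candidate compact sets (a $C^\alpha_t(H^{-4}_x)$-ball intersected with an $L^2_t(H^{-1}_x)$-ball), the same two a priori estimates (Lemmas \ref{lem:Hm4_bound} and \ref{lem-Hm1_bound}) fed into Chebyshev, and the same identification of the two limits by testing against $\theta(t)\varphi(x)$ as in Lemma \ref{lem:cpt_set}. Where you genuinely diverge is in the proof of compactness of $K_R$ in $C_t(\mc{M}_{x,M},w^\ast)$. The paper proves a general statement (Lemma \ref{lem-compact}) for any pair of separable Banach spaces $Y\hookrightarrow X$ with \emph{dense} embedding, via a double diagonal extraction: Banach--Alaoglu in $X^\ast$ at rational times combined with Ascoli--Arzel\`a applied to $t\mapsto\lan z^n_t,\varphi\ran$ for countably many $\varphi$. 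You instead exploit the \emph{compactness} of $H^4_x\hookrightarrow C_x$ (via Schauder's theorem for the adjoint) to show that the weak-$^\ast$ topology on $\mc{M}_{x,M}$ agrees with the $H^{-4}_x$-norm topology, which turns $K_R$ into an equi-H\"older family with values in a compact metric space and lets you quote the classical Arzel\`a--Ascoli theorem directly. Your route is shorter and arguably more transparent in this concrete setting, but it is less general: it would not apply if the embedding $Y\hookrightarrow X$ were merely dense and continuous, whereas the paper's Lemma \ref{lem-compact} (in the spirit of the stochastic Aubin--Lions lemma of Brze\'zniak--Motyl) would. Two small points worth making explicit if you write this up: closedness of $K_R$ under uniform $H^{-4}_x$-convergence (lower semicontinuity of the $C^\alpha_t(H^{-4}_x)$ norm under pointwise convergence), and the fact that $\xi^\eps$ is, up to indistinguishability, a genuine random variable in the intersection space (Lemma \ref{lem:cons_mass} and the measurability discussion preceding Lemma \ref{lem:tightness}), so that the probabilities you estimate are well defined.
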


We start with a generalization of \cite[Lemma 3.1]{BrzMot2013}. The latter is a refined version of the compactness argument in \cite{FlaGat1995}, which can be seen as a stochastic version of the Aubin-Lions lemma. Given a Banach space $X$, we call $B_M^X$ the closed ball in $X$ of radius $M$.

\begin{lemma}\label{lem-compact}
Let $X$, $Y$ be separable Banach spaces with $Y$ densely embedded in $X$. Then, for every $M\ge0$, $\alpha>0$, $a\ge0 $, the set
\begin{align*}
A_a = \{ z \in C_t(B_M^{X^\ast},w^\ast ) : \|z\|_{C^\alpha_t(Y^\ast)}\le a \}
\end{align*}
is compact in $C_t(B_M^{X^\ast},w^\ast )$.
\end{lemma}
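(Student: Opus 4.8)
The plan is to prove \emph{sequential} compactness, which suffices here: by the Banach--Alaoglu theorem together with the separability of $X$, the set $(B_M^{X^\ast},w^\ast )$ is compact and metrizable, so $C_t(B_M^{X^\ast},w^\ast )$, endowed with the topology of uniform convergence, is itself metrizable. I would therefore take an arbitrary sequence $(z_n)_n\subset A_a$ and extract a subsequence converging in $C_t(B_M^{X^\ast},w^\ast )$ to a limit which I then check lies again in $A_a$.

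The central device is to build the metric for the weak-$\ast$ topology out of elements of $Y$, so that the H\"older control in $Y^\ast$ feeds directly into equicontinuity. Since $Y$ is densely embedded in $X$ and $X$ is separable, one can choose a countable set $\{y_i\}_i\subset Y$ with $\|y_i\|_X\le 1$ that is dense in the closed unit ball of $X$; then
\[
d(f,g):=\sum_i 2^{-i}\,|\langle f-g,y_i\rangle|
\]
metrizes the weak-$\ast$ topology on $B_M^{X^\ast}$. For $z_n\in A_a$ and $s,t\in[0,T]$ I would estimate, for each $i$,
\[
|\langle z_n(t)-z_n(s),y_i\rangle|\le \min\bigl(2M,\; a\,\|y_i\|_Y\,|t-s|^\alpha\bigr),
\]
using $\|z_n(t)-z_n(s)\|_{X^\ast}\le 2M$ for the first bound and $\|z_n(t)-z_n(s)\|_{Y^\ast}\le a|t-s|^\alpha$ together with $y_i\in Y$ for the second. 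Multiplying by $2^{-i}$, summing and invoking dominated convergence shows that $d(z_n(t),z_n(s))\to 0$ as $|t-s|\to 0$ uniformly in $n$; that is, $(z_n)_n$ is equicontinuous as a family of maps into $(B_M^{X^\ast},d)$. Since the values lie in the compact set $B_M^{X^\ast}$, the Arzel\`a--Ascoli theorem produces a subsequence $(z_{n_k})_k$ converging in $C_t(B_M^{X^\ast},w^\ast )$ to some $z$.

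It then remains to verify $z\in A_a$, where membership in $C_t(B_M^{X^\ast},w^\ast )$ is automatic and the only genuine point is the bound $\|z\|_{C^\alpha_t(Y^\ast)}\le a$. Noting that $X^\ast$ embeds in $Y^\ast$ (the dual of the dense embedding $Y\hookrightarrow X$), so that all the relevant elements lie in $Y^\ast$, I would argue that for each fixed $t$ the uniform weak-$\ast$ convergence gives $\langle z_{n_k}(t),x\rangle\to\langle z(t),x\rangle$ for every $x\in X$, in particular for $x=y\in Y$; hence $z_{n_k}(t)\to z(t)$ in $\sigma(Y^\ast,Y)$. Since the $Y^\ast$-norm is lower semicontinuous for this topology, for all $s,t$
\[
\|z(t)-z(s)\|_{Y^\ast}\le \liminf_{k}\|z_{n_k}(t)-z_{n_k}(s)\|_{Y^\ast}\le a\,|t-s|^\alpha,
\]
and likewise $\sup_t\|z(t)\|_{Y^\ast}\le a$; together these yield $\|z\|_{C^\alpha_t(Y^\ast)}\le a$, so $z\in A_a$ and $A_a$ is compact.

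The main obstacle is exactly the mismatch between the two topologies: compactness is available only in the weak-$\ast$ topology, which tests against $X$, whereas the quantitative (H\"older) control lives in the $Y^\ast$-norm. The resolution is to metrize the weak-$\ast$ topology using a countable dense subset of $Y$, and to exploit that weak-$\ast$ convergence of norm-bounded sequences in $X^\ast$ automatically upgrades to $\sigma(Y^\ast,Y)$-convergence; this is what makes a single $Y^\ast$-estimate deliver both the equicontinuity required by Arzel\`a--Ascoli and the lower semicontinuity required to pass the H\"older bound to the limit. The density of $Y$ in $X$ is the one hypothesis used essentially, and it enters at both of these places.
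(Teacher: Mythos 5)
Your proof is correct, and it is organized differently from the one in the paper. Both arguments rest on the same two pillars: the weak-$\ast$ topology on $B_M^{X^\ast}$ is metrized by testing against a countable set taken inside $Y$ (possible by density of $Y$ in $X$), and the $C^\alpha_t(Y^\ast)$ bound turns into equicontinuity of the scalar maps $t\mapsto \langle z_n(t),y\rangle$. They also end identically: the limit's H\"older bound in $Y^\ast$ is recovered by writing the $Y^\ast$-norm as a supremum of weak-$\ast$ continuous functionals (your lower-semicontinuity step is literally the paper's computation $\|z_t-z_s\|_{Y^\ast}=\sup_{\varphi\in D,\,\|\varphi\|_Y\le1}|\langle z_t-z_s,\varphi\rangle|$). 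Where you diverge is the extraction of the convergent subsequence. The paper performs a double diagonalization — Banach--Alaoglu at each rational time to get candidate limits $\tilde z_t$, then the scalar Ascoli--Arzel\`a theorem separately for each test function $\varphi$ in a countable dense subset of $Y$ — and must then reassemble a limit $z_t$ at irrational times by a further weak-$\ast$ compactness argument and check its continuity by hand. You instead interpolate the two available bounds, $|\langle z_n(t)-z_n(s),y_i\rangle|\le\min(2M,\,a\|y_i\|_Y|t-s|^\alpha)$, into a single modulus of continuity for the metric $d$, and invoke the vector-valued Arzel\`a--Ascoli theorem for maps into the compact metric space $(B_M^{X^\ast},d)$ once; continuity of the limit and uniform weak-$\ast$ convergence then come for free. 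Your route is shorter and avoids the reconstruction step, at the price of requiring the slightly less elementary form of Arzel\`a--Ascoli and the (easy, but worth stating) fact that a countable subset of $Y$ can be chosen dense in the closed unit ball of $X$ after rescaling. One cosmetic remark: if $\|\cdot\|_{C^\alpha_t(Y^\ast)}$ is the sum of the sup-norm and the H\"older seminorm (as in the paper's Remark on Borel norms), then bounding each piece by $a$ separately does not literally give the sum $\le a$; the clean statement is that the full norm is lower semicontinuous under pointwise $\sigma(Y^\ast,Y)$-convergence, so $\|z\|_{C^\alpha_t(Y^\ast)}\le\liminf_k\|z_{n_k}\|_{C^\alpha_t(Y^\ast)}\le a$. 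The paper's own proof has the same wrinkle, so this is not a gap specific to your argument.
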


\begin{remark}\label{rmk:continuity_weak*}
For the proof, we recall the following facts. First, the ball $B_M^{X^\ast}$ endowed with the weak-$^\ast$ topology is metrizable with the distance $d_{B_M^{X^\ast}}(w,w') = \sum_j 2^{-j} |\lan w-w', \varphi_j \ran|$, where $(\varphi_j)_j$ is a dense sequence in $B^X_1$, see \cite[Theorem 3.28]{Brezis-2011}. Hence the set $C_t(B_M^{X^\ast},w^\ast )$ is metrizable with the distance
\begin{align}
d(z,z') = \sup_{t\in[0,T]} \sum_j 2^{-j} |\lan z-z', \varphi_j \ran|,\quad z,z'\in C_t(B_M^{X^\ast},w^\ast ).\label{eq:dist_CM}
\end{align}
Moreover, given an $C_t(B_M^{X^\ast},w^\ast )$-valued  sequence $(z^n)_n$, an element  $z \in C_t(B_M^{X^\ast},w^\ast )$ and a dense subset $D \subset X$, the following three conditions are equivalent.
\begin{trivlist}
\item[$\bullet$] the sequence $z^n$ converges to $z$ in $C_t(B_M^{X^\ast},w^\ast )$;
\item[$\bullet$] for every $\varphi \in X$, the sequence $\lan z^n,\varphi \ran$  in $C_t$ to $\lan z,\varphi \ran$;
\item[$\bullet$] for every $\varphi \in D$, the sequence $\lan z^n,\varphi \ran$ converges  in $C_t$ to $\lan z,\varphi \ran$.
\end{trivlist}
Equivalence between the first two conditions can be seen using the distance defined in \eqref{eq:dist_CM}. Equivalence  between the last two conditions can be seen by approximating a generic element  $\varphi \in X$ by an $D$-valued sequence  and using the uniform bound $\sup_n \sup_{t\in [0,T]} \|z_n\|_{X^\ast}\le M$.
\end{remark}

\begin{proof}[Proof of Lemma \ref{lem-compact}]
Since the topological space  space $C_t(B_M^{X^\ast},w^\ast )$ is metrizable, the compactness is equivalent to the sequential compactness. Let $(z^n)_n$ be a sequence in $A_a$. We have to find a subsequence $(z^{n_k})_k$ which converges in $C_t(B_M^{X^\ast},w^\ast )$ to an element of the set  $A_a$.

For fixed $t$ in $\mathbb{Q}\cap [0,T]$, $(z^n_t)_n$ is an $B^{X^\ast}_M$-valued sequence.  Hence, by the Banach-Alaoglu Theorem, there exists a subsequence $(z^{n_k}_t)_k$ which is convergent weakly-$\ast$ to an element $\tilde{z}_t \in B^{X^\ast}_M$. By a diagonal procedure, we can find  a sequence $(n_k)_k$ independent of $t$ in $\mathbb{Q}\cap [0,T]$.

On the other side, let $D$ be a countable dense set in $Y$, and so in $X$. The fact that $z^n$ are equicontinuous and equibounded in $Y^\ast$ implies that, for every $\varphi \in D$, the functions $t\mapsto \lan z^{n_k}_t,\varphi \ran$ are equicontinuous and equibounded, their $C^\alpha$ norm being bounded by $a\|\varphi\|_Y$. Hence, by the Ascoli-Arzel\`a Theorem, there exists a subsequence converging  in $C_t$ to some element $f^\varphi=\{[0,T] \ni t\mapsto f^\varphi_t\}$, which also satisfies $\|f^\varphi\|_{C^\alpha_t}\le a\|\varphi\|_Y$. By a diagonal procedure, we can choose the subsequence independent of $\varphi \in D$. With a small abuse of notation, we continue using $n_k$ for this subsequence. Then, for all $t$ in $\mathbb{Q}\cap [0,T]$, for all $\varphi \in D$, $\lan \tilde{z}_t,\varphi\ran = f^\varphi_t$.

Fix $t$ in $[0,T]$ and let $(t_j)_j$ be a sequence in $\mathbb{Q}\cap [0,T]$ converging to $t$. Since the sequence $(\tilde{z}_{t_j})_j$ takes values in the ball  $B^{X^\ast}_M$,  up to a subsequence, it converges weakly-$\ast$ to an element $z_t$ in $B^{X^\ast}_M$. On the other hand, for every $\varphi \in D$, by the continuity of the function $[0,T]\ni t\mapsto f^\varphi_t$, we infer that  $\lan z_t,\varphi \ran = f^\varphi_t$.

Since the map $[0,T]\ni t\mapsto \lan z_t,\varphi \ran = f^\varphi_t$ is continuous for every $\varphi \in D$, and in fact for every $\varphi \in X$, by an approximation argument, we infer that  $z=[0,T]\ni t \mapsto z_t $  belongs to  $C_t(B^{X*}_M,w^\ast )$. Moreover
\begin{align*}
&\|z_t-z_s\|_{Y^\ast} = \sup_{\varphi \in D,\|\varphi\|_Y\le 1} |\lan z_t-z_s,\varphi \ran|\\
&= \sup_{\varphi \in D,\|\varphi\|_Y\le 1} |f^\varphi_t-f^\varphi_s| \le a|t-s|^\alpha,
\end{align*}
and similarly for $\|z_t\|_{Y^\ast}$ alone. Hence $\|z\|_{C^\alpha_t(Y^\ast)}\le a$ and so $z$ is in $A_a$.

Finally, for every $\varphi \in D$, $\lan z^{n_k},\varphi \ran$ converges uniformly to $f^\varphi = \lan z,\varphi \ran$. Therefore, by Remark \ref{rmk:continuity_weak*}, we infer that $z^{n_k}$ converges to $z$ in $C_t((B_M^{X^\ast},w^\ast )$. The proof is complete.
\end{proof}

%
%
%

As a consequence of the previous Lemma and the Banach-Alaoglu Theorem, we get the following result.

\begin{lemma}\label{lem:cpt_set}
For  all  $M>0$, $\alpha>0$ and  $a,b\ge0 $, the set
\begin{align*}
A_{a,b}=A_{a,b}(M,\alpha) = \{ \mu \in C_t(\mc{M}_{x,M},w^\ast ) \cap L^m_t(H^{-1}_x) : \|\mu\|_{C^\alpha_t(H^{-4}_x)}\le a, \ \|\mu\|_{L^m_t(H^{-1}_x)}\le b \}
\end{align*}
is metrizable and compact subset of  $C_t(\mc{M}_{x,M},w^\ast ) \cap (L^m_t(H^{-1}_x),w)$.
\end{lemma}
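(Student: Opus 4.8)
The plan is to identify $A_{a,b}$ as the intersection of the compact set furnished by Lemma \ref{lem-compact} with a weakly compact ball, reduce compactness to sequential compactness by establishing metrizability, and then match the two limits produced by the two topologies.

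First I would apply Lemma \ref{lem-compact} with $X = C_x$ and $Y = H^4_x$. The Sobolev embedding theorem gives that $H^4(\mt^2)$ embeds densely and continuously into $C(\mt^2)$, so that $X^\ast = \mc{M}_x$ and $Y^\ast = H^{-4}_x$, and all the relevant spaces ($\mc{M}_x$ and $H^{-1}_x$) embed continuously into the common ambient space $H^{-4}_x$. Lemma \ref{lem-compact} then yields that $A_a = \{ z \in C_t(\mc{M}_{x,M},w^\ast) : \|z\|_{C^\alpha_t(H^{-4}_x)} \le a \}$ is compact in $C_t(\mc{M}_{x,M},w^\ast)$; in particular $A_{a,b} \subseteq A_a$.

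Next I would address metrizability. The space $C_t(\mc{M}_{x,M},w^\ast)$ is metrizable by Remark \ref{rmk:continuity_weak*}. For the relevant range $1 < m < \infty$, the space $L^m_t(H^{-1}_x)$ is separable and reflexive (being an $L^m$ space over the separable Hilbert space $H^{-1}_x$), with separable dual $L^{m'}_t(H^1_x)$; hence, by the Banach--Alaoglu theorem, the closed ball $B_b := \{ z : \|z\|_{L^m_t(H^{-1}_x)} \le b \}$ is weakly compact and metrizable in the weak topology. The intersection topology on $A_{a,b}$ coincides with the subspace topology induced by the diagonal embedding $\mu \mapsto (\mu,\mu)$ into the metrizable product $C_t(\mc{M}_{x,M},w^\ast) \times (B_b, w)$, and is therefore metrizable; thus compactness reduces to sequential compactness.

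Finally comes the sequential compactness together with the identification of the limit, which is where I expect the real work to lie. Given a sequence $(\mu^n)_n$ in $A_{a,b}$, Lemma \ref{lem-compact} provides a subsequence converging in $C_t(\mc{M}_{x,M},w^\ast)$ to some $\mu \in A_a$; since $\sup_n \|\mu^n\|_{L^m_t(H^{-1}_x)} \le b$ and $L^m_t(H^{-1}_x)$ is reflexive, a further subsequence converges weakly in $L^m_t(H^{-1}_x)$ to some $\nu$, with $\|\nu\|_{L^m_t(H^{-1}_x)} \le b$ by weak lower semicontinuity of the norm. The main obstacle is to show $\mu = \nu$, and I would do this by testing against product functions: for $g \in C_t$ and $\varphi \in C^\infty_x$, the map $(t,x) \mapsto g(t)\varphi(x)$ lies in $L^{m'}_t(H^1_x)$, so weak convergence gives $\int_0^T g(t) \lan \mu^n_t, \varphi\ran \rmd t \to \int_0^T g(t) \lan \nu_t, \varphi\ran \rmd t$, while convergence in $C_t(\mc{M}_{x,M},w^\ast)$ gives $\lan \mu^n_t, \varphi\ran \to \lan \mu_t,\varphi\ran$ uniformly in $t$ with the domination $|\lan \mu^n_t,\varphi\ran| \le M\|\varphi\|_{C_x}$, whence by dominated convergence $\int_0^T g(t)\lan \mu^n_t,\varphi\ran \rmd t \to \int_0^T g(t)\lan \mu_t,\varphi\ran \rmd t$. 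Comparing the two limits yields $\int_0^T g(t) \lan \mu_t - \nu_t, \varphi\ran \rmd t = 0$ for all $g \in C_t$ and all $\varphi$ in a countable dense subset of $C_x$, so $\mu_t = \nu_t$ for a.e.\ $t$. Hence $\mu$ is a continuous representative of $\nu$ in $L^m_t(H^{-1}_x)$, so $\|\mu\|_{L^m_t(H^{-1}_x)} \le b$ and $\mu \in A_{a,b}$. The selected subsequence converges to $\mu$ both in $C_t(\mc{M}_{x,M},w^\ast)$ and weakly in $L^m_t(H^{-1}_x)$, hence in the intersection topology, which establishes sequential compactness and completes the proof.
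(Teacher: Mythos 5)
Your proposal is correct and follows essentially the same route as the paper's proof: apply Lemma \ref{lem-compact} with $X=C_x$, $Y=H^4_x$ to get a subsequence converging in $C_t(\mc{M}_{x,M},w^\ast)$, use Banach--Alaoglu (reflexivity) to get weak convergence of a further subsequence in $L^m_t(H^{-1}_x)$, and identify the two limits by testing against products $g(t)\varphi(x)$. The only difference is that you spell out the dominated-convergence and metrizability details that the paper leaves implicit.
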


\begin{proof}
Since the topologies on $C_t(\mc{M}_{x,M},w^\ast )$ and on the closed ball of radius $b$ in $(L^m_t(H^{-1}_x),w^\ast )$ are metrizable, $A_{a,b}$ is metrizable as well and the compactness is equivalent to the sequential compactness.

Let $(\mu^n)_n$ be a sequence in $A_{a,b}$. By the previous Lemma, applied to $X=C_x$ and $Y=H^4_x$, there exists a sub-subsequence $(\mu^{n_k})_k$ converging to some $\mu$ in $C_t(\mc{M}_{x,M},w^\ast )$ with $\|\mu\|_{C^\alpha_t(H^{-4}_x)}\le a$. On the other hand, by the Banach-Alaoglu theorem, there exists a subsequence, which we can assume $(\mu^{n_k})_k$ up to relabelling, converging to some $\nu$ in $(L^m_t(H^{-1}_x),w)$ with $\|\nu\|_{L^m_t(H^{-1}_x)}\le b$. Using these two limits, for every $g$ in $C_t$ and every $\varphi \in C^1_x$, we have
\begin{align*}
\int^T_0 g(t) \lan \mu_t, \varphi \ran \rmd t = \int^T_0 g(t) \lan \nu_t, \varphi \ran \rmd t.
\end{align*}
Hence $\mu=\nu$ and so $\mu$ is the limit in $A_{a,b}$ of the subsequence $(\mu^{n_k})_k$. The proof is complete.
\end{proof}

We are ready to prove tightness of $\xi^\eps$.

\begin{proof}[Proof of Lemma \ref{lem:tightness}]
As we have seen at the beginning of this section, by Lemmas \ref{lem:cons_mass} and \ref{lem-Hm1_bound}, for any $\eps>0$, $\xi^\eps$ is, up to an indistinguishable version, a $C_t(\mc{M}_x,w^\ast )\cap (L^2_t(H^{-1}_x),w)$-valued random variable. Lemma \ref{lem:cpt_set} ensures that the set $A_{a,b}$ defined in that Lemma is metrizable and compact in $C_t(\mc{M}_x,w^\ast )\cap (L^2_t(H^{-1}_x),w)$. The Markov inequality gives
\begin{align*}
&P\{\xi^\eps \notin A_{a,b} \}\le P\{ \| \xi^\eps \|_{C_t^\alpha(H^{-4}_x)}>a \} + P\{ \| \xi^\eps \|_{L^m_t(H^{-1}_x)}>b \}\\
&\le a^{-m} \E \| \xi^\eps \|_{C_t^\alpha(H^{-4}_x)}^m + b^{-m}\E \| \xi^\eps \|_{L^m_t(H^{-1}_x)}^m.
\end{align*}
By Lemmas \ref{lem-Hm1_bound} and \ref{lem:Hm4_bound}, the right-hand side above can be made arbitrarily small, uniformly in $\eps$, taking $a$ and $b$ large enough. The tightness is proved.
\end{proof}

As a consequence, we have actually:

\begin{corollary}\label{cor:tightness_couple}
The family $(\xi^\eps,W)_{\eps \in (0,1]}$, where $W=(W^k)_k$ is tight on the space $\chi := [C_t(\mc{M}_{x,M},w^\ast )\cap(L^m_t(H^{-1}_x),w)] \times C_t^{\mathbb{N}}$.
\end{corollary}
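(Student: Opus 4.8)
The plan is to reduce joint tightness to tightness of the two marginals, exploiting the elementary fact that on a (countable) product of separable metrizable spaces, tightness of each marginal family implies tightness of the joint family. First I would recall that Lemma \ref{lem:tightness} already provides the tightness of the family of laws of $(\xi^\eps)_{\eps\in(0,1]}$ on the first factor $C_t(\mc{M}_{x,M},w^\ast )\cap(L^m_t(H^{-1}_x),w)$. Thus, for a given $\delta>0$, there is a compact set $K_1$ in this factor with $\sup_{\eps}\mathbb{P}(\xi^\eps\notin K_1)<\delta/2$. I would also note that the pair $(\xi^\eps,W)$ is a genuine $\chi$-valued random variable: each component is Borel measurable into its factor, and since both factors are separable metrizable, the Borel $\sigma$-algebra on $\chi$ coincides with the product of the Borel $\sigma$-algebras, so the joint law is well defined.

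The second, and in fact only slightly delicate, observation is that the Wiener process $W=(W^k)_k$ does not depend on $\eps$, so the "family" of its laws is a \emph{single} Borel probability measure on the space $C_t^{\mathbb{N}}$. Since $C_t=C([0,T])$ is a separable Banach space, hence Polish, the countable product $C_t^{\mathbb{N}}$ with the product topology is Polish as well. Every finite Borel measure on a Polish space is Radon, and therefore tight; consequently there exists a compact set $K_2\subset C_t^{\mathbb{N}}$ with $\mathbb{P}(W\notin K_2)<\delta/2$. This step requires no quantitative estimate whatsoever, only the topological fact that $C_t^{\mathbb{N}}$ is Polish.

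Finally, I would combine the two: the product $K_1\times K_2$ is compact in $\chi$ (a finite product of compact sets is compact in the product topology), and a union bound yields $\sup_{\eps}\mathbb{P}\big((\xi^\eps,W)\notin K_1\times K_2\big)\le \sup_{\eps}\mathbb{P}(\xi^\eps\notin K_1)+\mathbb{P}(W\notin K_2)<\delta$. As $\delta>0$ is arbitrary, this establishes tightness of $(\xi^\eps,W)_{\eps\in(0,1]}$ on $\chi$. I do not anticipate any genuine obstacle here, since all the analytic work has already been carried out in Lemma \ref{lem:tightness}; the only point deserving care is the purely measure-theoretic bookkeeping, namely verifying that $C_t^{\mathbb{N}}$ is Polish (so that the law of $W$ is automatically tight) and that the product space $\chi$ is well behaved enough for the "marginal tightness implies joint tightness" principle to apply.
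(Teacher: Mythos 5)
Your argument is correct and is exactly the route the paper takes: the paper's proof simply states that joint tightness follows from tightness of the marginals, which is the union-bound argument you spell out (tightness of $\xi^\eps$ from Lemma \ref{lem:tightness}, automatic tightness of the single law of $W$ on the Polish space $C_t^{\mathbb{N}}$, and compactness of the product of compact sets). No gaps.
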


\begin{proof}
The tightness of $(\xi^\eps,W)$ follows easily from the tightness of the marginals.
\end{proof}

\subsection{Convergence}

Let us begin this subsection with an observation that we can apply the Skorohod-Jakubowski representation theorem, see \cite{Jak1997} and \cite{Brz+Ondr_2011} to the family $(\xi^\eps,W)_\eps$ and the space
\[
\Xi = [C_t(\mc{M}_{x,M},w^\ast ) \cap (L^m_t(H^{-1}_x),w)] \times C_t^\mathbb{N}.\]
 Indeed the family $(\xi^\eps,W)_\eps$ is tight by Corollary \ref{cor:tightness_couple} and the space $\Xi$ satisfies the assumption (10) in \cite{Jak1997}: for given sequences $(t_i)_i$ dense in $[0,T]$, $(\varphi_j)_j$ dense in $C_x$, the maps $f_{i,j}$ and $g_{i,k}$, defined on $\Xi$ by $f_{i,j}(\mu,\gamma)=\lan \mu_{t_i},\varphi_j\ran$ and $g_{i,k}(\mu,\gamma)= \arctan(\gamma^k_{t_i})$, form a sequence of continuous, uniformly bounded maps separating points in $\Xi$.

Hence, by the Skorohod-Jakubowski Theorem there exist an infinitesimal sequence $(\eps_j)_j$, a probability space $(\td{\Omega},\td{\mc{A}},\td{\mathbb{P}})$, a $\Xi$-valued sequence $(\td{\xi}^j, \td{W}^{(j)})_j$ and a $\Xi$-valued random variable $(\td{\xi},\td{W})$ such that $(\td{\xi}^j,\td{W}^{(j)})$ has the same law of $(\xi^{\eps_j},W)$ and $(\td{\xi}_j,\td{W}^{(j)})$ converges to $(\td{\xi},\td{W})$ a.s. in $C_t(\mc{M}_{x,M},w^\ast ) \cap (L^m_t(H^{-1}_x),w)$. For notation, we use $\td{W}^{(j),k}$ and $\td{W}^k$ for the $k$-th component of the $C_t^{\mathbb{N}}$-valued random variables $\td{W}^{(j)}$ and $\td{W}$.

Let us denote  $\td{\mathbb{F}^0}=(\td{\mc{F}}_t^{0})_{t\in [0,T]}$ the filtration generated by the processes $\td{\xi}$, $\td{W}$ and the $\td{\mathbb{P}}$-null sets on $(\td{\Omega},\td{\mc{A}},\td{\mathbb{P}})$. We also put $\td{\mathbb{F}}=(\td{\mc{F}}_t)_{t\in [0,T]}$, where  $\td{\mc{F}}_t = \cap_{s>t}\td{\mc{F}}^{0}_s$. Similarly, we denote by  $\td{\mathbb{F}^{0,j}}=( \td{\mc{F}}_t^{0,j})_{t\in [0,T]}$ the filtration generated by the processes $\td{\xi}^j$, $\td{W}^{(j)}$ and the $\td{\mathbb{P}}$-null sets on $(\td{\Omega},\td{\mc{A}},\td{\mathbb{P}})$ and finally we put $\td{\mathbb{F}^j}=( \td{\mc{F}}_t^{j})_{t\in [0,T]}$, where  $\td{\mc{F}}^j_t = \cap_{s>t}\td{\mc{F}}^{0,j}_s$.

\begin{lemma}\label{lem:BM_enlarged}
The filtration $\td{\mathbb{F}}$ is complete and right-continuous and $\td{W}$ is a cylindrical $\td{\mathbb{F}}$-Wiener process. Moreover, the process  $\td{\xi}$ is an $(\mc{M}_{x,M},w^\ast )$-valued $(\td{\mc{F}}_t)_{t\in [0,T]}$-progressively measurable. Similar statements hold for  $\td{\mathbb{F}}^j$, $\td{W}^{(j)}$ and $\td{\xi}^j$, for every $j \in \mathbb{N}$.
\end{lemma}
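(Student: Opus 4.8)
The plan is to verify the four assertions in turn, the only substantial point being that $\td{W}$ remains a Wiener process after its natural filtration is enlarged by the information carried by $\td\xi$. First the easy parts. By construction $\td{\mathbb{F}}^0$ contains all $\td{\mathbb{P}}$-null sets, so each $\td{\mc{F}}^0_s$ is complete; since $\td{\mc{F}}_t=\cap_{s>t}\td{\mc{F}}^0_s$, the filtration $\td{\mathbb{F}}$ is complete and right-continuous directly from its definition. The process $\td\xi$ has, $\td{\mathbb{P}}$-a.s., paths in $C_t(\mc{M}_{x,M},w^\ast)$ and is $\td{\mathbb{F}}^0$-adapted, hence $\td{\mathbb{F}}$-adapted; as $(\mc{M}_{x,M},w^\ast)$ is a compact Polish space, a continuous adapted process with values in it is progressively measurable, which settles the assertion for $\td\xi$ (and identically for $\td\xi^j$).

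Next I would pin down the law of $\td W$. Since $\td W^{(j)}\stackrel{d}{=}W$ and $\td W^{(j)}\to\td W$ $\td{\mathbb{P}}$-a.s.\ in $C_t^{\mathbb{N}}$, hence in law, the limit satisfies $\td W\stackrel{d}{=}W$; in particular $\td W=(\td W^k)_k$ is a sequence of mutually independent standard Brownian motions with continuous paths, and it is $\td{\mathbb{F}}^0$-adapted. What remains, and what I regard as the heart of the matter, is that for $0\le s<t$ the increment $\td W_t-\td W_s$ is independent of $\td{\mc{F}}_s$.

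I would first establish independence with respect to $\td{\mc{F}}^0_s$. Fix finitely many components, times $s_1<\dots<s_m\le s$, a bounded continuous function $g$ of the increment, and a bounded continuous function $h$ of the variables $\lan\td\xi_{s_i},\varphi_l\ran$ and $\td W_{s_i}$, where $(\varphi_l)_l$ is the separating sequence fixed earlier. On the original space $W$ is an $\mathbb{F}$-cylindrical Wiener process and $\xi^\eps$ is adapted, so $W_t-W_s$ is independent of $\sigma(\xi^{\eps_j}_r,W_r:r\le s)$, giving $\E[g(W_t-W_s)\,h]=\E[g(W_t-W_s)]\,\E[h]$; by equality of laws the same identity holds for $(\td\xi^j,\td W^{(j)})$. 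The a.s.\ convergence $(\td\xi^j,\td W^{(j)})\to(\td\xi,\td W)$ yields $\td W^{(j)}_t-\td W^{(j)}_s\to\td W_t-\td W_s$ and $\lan\td\xi^j_{s_i},\varphi_l\ran\to\lan\td\xi_{s_i},\varphi_l\ran$ at each fixed time (uniform convergence on $[0,T]$ forces pointwise convergence, the evaluation functionals being continuous by Remark \ref{rmk:continuity_weak*}), so by continuity of $g,h$ and bounded convergence I may let $j\to\infty$ and obtain the factorisation for $(\td\xi,\td W)$. Since the variables $\{\lan\td\xi_r,\varphi_l\ran,\ \td W_r:r\le s\}$ generate $\td{\mc{F}}^0_s$ up to null sets, a monotone-class argument extends the factorisation to all bounded $\td{\mc{F}}^0_s$-measurable $h$ and yields $\td W_t-\td W_s\perp\td{\mc{F}}^0_s$.

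To upgrade to the right-continuous filtration I would use that for every $u\in(s,t)$ the increment $\td W_t-\td W_u$ is independent of $\td{\mc{F}}^0_u\supseteq\td{\mc{F}}_s$, while $\td W_t-\td W_u\to\td W_t-\td W_s$ $\td{\mathbb{P}}$-a.s.\ as $u\downarrow s$ by path continuity; independence of $\td{\mc{F}}_s$ is stable under a.s.\ limits, whence $\td W_t-\td W_s\perp\td{\mc{F}}_s$. Together with adaptedness and the correct law this shows $\td W$ is a cylindrical $\td{\mathbb{F}}$-Wiener process. For each fixed $j$ the statements are simpler: since $(\td\xi^j,\td W^{(j)})$ has exactly the law of $(\xi^{\eps_j},W)$, the factorisation identity transfers directly with no limit, and the same argument upgrades $\td{\mathbb{F}}^{0,j}$ to $\td{\mathbb{F}}^j$. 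The main obstacle is precisely the passage $j\to\infty$ above: one must ensure that the functionals generating $\td{\mc{F}}^0_s$ are continuous on the path space $C_t(\mc{M}_{x,M},w^\ast)\cap(L^m_t(H^{-1}_x),w)$ so that a.s.\ convergence there transports the factorisation to the limit; this is exactly what Remark \ref{rmk:continuity_weak*} secures.
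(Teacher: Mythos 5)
Your proposal is correct and follows essentially the same route as the paper: identify the raw filtration generated by $(\td{\xi},\td{W})$, transfer the increment-independence factorisation from $(\xi^{\eps_j},W)$ via equality of laws, pass to the limit using the a.s.\ convergence in $C_t(\mc{M}_{x,M},w^\ast)$, and then upgrade to the completed right-continuous filtration. The only difference is one of detail: the paper states the limiting independence in one line and delegates the right-continuity upgrade to a citation of Bass's Proposition 2.5, whereas you write out the test-function, monotone-class and $u\downarrow s$ arguments explicitly.
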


The proof of this Lemma is simple but technical and postponed to  Appendix \ref{app-B}.

Now we will prove that each copy $\td{\xi}^j$ of the approximating process  $\xi^{\eps_j}$ is a solution to the stochastic vorticity equation.

\begin{lemma}\label{lem:eq_enlarged}
For every $j\in \mathbb{N}$, the object \[(\td{\Omega},\td{\mc{A}},\td{\mathbb{F}}^j,\td{\mathbb{P}},\td{W}^{(j)},\td{\xi}^j)\]
is a $\mc{M}_{x,M}$-valued solution to the vorticity equation \eqref{eqn: Euler stoch vorticity form-Intro} with the initial condition $\td{\xi}^j_0=\xi_0^{\eps_j}$ $\mathbb{P}$-a.s.. Moreover Lemmata \ref{lem-Hm1_bound} and \ref{lem:Hm4_bound} hold for $\td{\xi}^j$ in place of $\xi^{\eps}$ and, $\mathbb{P}$-a.s.,~$\td{\xi}_t$ is non-negative for every $t\in [0,T]$.
\end{lemma}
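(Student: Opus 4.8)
The plan is to transfer every property of $\xi^{\eps_j}$ to its Skorokhod copy $\td\xi^j$ by exploiting that $(\td\xi^j,\td W^{(j)})$ and $(\xi^{\eps_j},W)$ share the same law on $\Xi$, together with the structural facts of Lemma \ref{lem:BM_enlarged}. The requirements of Definition \ref{def:sol} that concern the stochastic basis — completeness and right-continuity of $\td{\mathbb{F}}^j$, that $\td W^{(j)}$ is a cylindrical $\td{\mathbb{F}}^j$-Wiener process, and the progressive measurability of $\td\xi^j$ — are precisely the content of Lemma \ref{lem:BM_enlarged}. The initial condition $\td\xi^j_0=\xi_0^{\eps_j}$, the non-negativity $\td\xi^j_t\ge0$ for all $t$, and the a priori bounds of Lemmata \ref{lem-Hm1_bound} and \ref{lem:Hm4_bound} are all properties of the law: the evaluation $z\mapsto z_0$ is continuous on $C_t(\mc{M}_{x,M},w^\ast)$, so $\{z:z_0=\xi_0^{\eps_j}\}$ is closed; the set $\{z:z_t\ge0\ \forall t\}$ is closed; and the relevant norms are Borel functionals of the path by Remark \ref{rmk:Borel_norm}, so their moments coincide under the two laws. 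Each of these therefore transfers immediately from $\xi^{\eps_j}$ (where they hold by Lemma \ref{lem:cons_mass} and the a priori bounds) to $\td\xi^j$.

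The substantial point is that $\td\xi^j$ satisfies \eqref{eq:stochEulervort}, equivalently the weak form \eqref{eq:stochEulervort_test}. I would fix $\varphi\in C^\infty_x$ and $t\in[0,T]$ and introduce the residual functional
\[
\Phi_n(z,\gamma):=\lan z_t,\varphi\ran-\lan z_0,\varphi\ran-\int_0^t\lan N(z_r),\varphi\ran\,\rmd r-\frac{c}{2}\int_0^t\lan z_r,\Delta\varphi\ran\,\rmd r-\sum_{k\le n}S^{k,n}_t(z,\gamma),
\]
where $S^{k,n}_t(z,\gamma)=\sum_{t_i\in\pi_n}\lan z_{t_i},\sigma_k\cdot\nabla\varphi\ran\,(\gamma^k_{t_{i+1}\wedge t}-\gamma^k_{t_i\wedge t})$ is the left-point Riemann sum along a deterministic partition $\pi_n$ of $[0,T]$ with mesh tending to $0$. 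Every drift term is a Borel functional of $z\in C_t(\mc{M}_{x,M},w^\ast)$ — for the nonlinear term one uses $\|N(z_r)\|_{H^{-4}_x}\le CM^2$ from Lemma \ref{lem:Poupaud_trick}, so $r\mapsto\lan N(z_r),\varphi\ran$ is bounded and measurable — while $S^{k,n}_t$ is a continuous functional of $(z,\gamma)\in\Xi$, since $z\mapsto\lan z_{t_i},\sigma_k\cdot\nabla\varphi\ran$ and $\gamma\mapsto\gamma^k_s$ are continuous. Hence $\Phi_n$ is a Borel functional on $\Xi$.

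Because $\sigma_k\cdot\nabla\varphi\in C_x$ and the paths lie in $C_t(\mc{M}_{x,M},w^\ast)$, the integrand $r\mapsto\lan z_r,\sigma_k\cdot\nabla\varphi\ran$ is continuous in time and uniformly bounded by $M\|\sigma_k\|_{C_x}\|\varphi\|_{C^1_x}$; using that $\td W^{(j)}$ is an $\td{\mathbb{F}}^j$-Wiener process and $\td\xi^j$ is $\td{\mathbb{F}}^j$-progressively measurable (Lemma \ref{lem:BM_enlarged}), the Riemann sums $S^{k,n}_t(\td\xi^j,\td W^{(j)})$ converge in $L^2(\td{\mathbb{P}})$ to $\int_0^t\lan\td\xi^j_r,\sigma_k\cdot\nabla\varphi\ran\,\rmd\td W^{(j),k}_r$ as in \cite{BrzGolJeg2013}, and likewise $S^{k,n}_t(\xi^{\eps_j},W)\to\int_0^t\lan\xi^{\eps_j}_r,\sigma_k\cdot\nabla\varphi\ran\,\rmd W^k_r$ in $L^2(\mathbb{P})$; summing in $k$ is controlled by $\sum_k\|\sigma_k\|^2_{C^1_x}<\infty$ from Assumption \ref{assumption_sigma}. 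Thus $\Phi_n(\td\xi^j,\td W^{(j)})\to\td R^j_t(\varphi)$ and $\Phi_n(\xi^{\eps_j},W)\to R^{\eps_j}_t(\varphi)$ in $L^2$, where $\td R^j_t(\varphi)$ and $R^{\eps_j}_t(\varphi)$ are the exact residuals of \eqref{eq:stochEulervort_test}. Since $\Phi_n$ is Borel and the two laws on $\Xi$ coincide, $\E_{\td{\mathbb{P}}}|\Phi_n(\td\xi^j,\td W^{(j)})|^2=\E_{\mathbb{P}}|\Phi_n(\xi^{\eps_j},W)|^2$ for every $n$, both being the integral of $|\Phi_n|^2$ against the common law (finite by the moment bounds on $\xi^{\eps_j}$ and the Gaussian increments). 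Letting $n\to\infty$ and using $R^{\eps_j}_t(\varphi)=0$ a.s., I obtain $\E_{\td{\mathbb{P}}}|\td R^j_t(\varphi)|^2=0$, i.e. $\td R^j_t(\varphi)=0$ $\td{\mathbb{P}}$-a.s. Running this over countable dense families of $\varphi\in C^\infty_x$ and $t\in[0,T]$ and using the $H^{-4}_x$-path continuity of $\td\xi^j$, I would upgrade to a single null set valid for all $t$ and $\varphi$, which is \eqref{eq:stochEulervort_test}; Lemma \ref{lem:stochEulervort_H} then yields \eqref{eq:stochEulervort}.

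The main obstacle is the stochastic-integral term: unlike the drift, it is not a path-continuous functional, so its value on the new space is not determined by the law in an elementary way. The Riemann-sum device circumvents this, but it rests on two inputs that must be secured — the $L^2$ convergence of the sums, which needs $\td W^{(j)}$ to be a genuine $\td{\mathbb{F}}^j$-Brownian motion adapted to a filtration making $\td\xi^j$ progressively measurable (supplied by Lemma \ref{lem:BM_enlarged}), and the interchange of the $n\to\infty$ limit with the change of measure, which requires the uniform-in-$n$ $L^2$ bounds so that equality of laws forces equality of the second moments of $\Phi_n$.
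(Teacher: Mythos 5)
Your proof is correct and follows essentially the same route as the paper's: the initial condition, non-negativity and the a priori bounds are transferred through equality of laws of Borel functionals of the path, and the equation is identified by writing the residual as an $L^2$-limit of Borel functionals of $(z,\gamma)$ in which the stochastic integral is replaced by left-point Riemann sums, which is exactly the device of \cite{BrzGolJeg2013} used in the paper's proof. The only cosmetic difference is that you compare second moments of the approximating residuals, whereas the paper notes that the residuals $Z_t$ and $\td{Z}_t$ themselves have the same law; both arguments yield $\td{Z}_t=0$ a.s.
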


Also the proof of this Lemma is technical and postponed to Appendix \ref{app-B}.

\subsection{Limiting equation}

Now we show that $\td{\xi}$ satisfies the vorticity equation with $\td{W}$ as Brownian motion. With this Lemma, Theorem \ref{thm:main} is proved.

\begin{lemma}\label{lem:limit_sol}
The object $(\td{\Omega},\td{\mc{A}},(\td{\mc{F}}_t)_{t\in [0,T]},\td{\mathbb{P}},\td{W},\td{\xi})$ is a $\mc{M}_{x,M}$-valued solution to the vorticity equation \eqref{eqn: Euler stoch vorticity form-Intro}, which is also in $C_t(\mc{M}_{x,M},w^\ast ) \cap (L^2_t(H^{-1}_x),w)$.
\end{lemma}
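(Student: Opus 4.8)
The plan is to pass to the limit $j\to\infty$ in the weak formulation \eqref{eq:stochEulervort_test} satisfied by each $\td{\xi}^j$ (Lemma \ref{lem:eq_enlarged}), exploiting the a.s.\ convergence $(\td{\xi}^j,\td{W}^{(j)})\to(\td{\xi},\td{W})$ in $\Xi$ coming from the Skorokhod--Jakubowski theorem. Since \eqref{eq:stochEulervort} is equivalent to \eqref{eq:stochEulervort_test} by Lemma \ref{lem:stochEulervort_H}, it suffices to fix $\varphi\in C^\infty_x$ and to pass to the limit in every term of \eqref{eq:stochEulervort_test} written for $\td{\xi}^j$ and $\td{W}^{(j)}$; an exceptional set independent of $t$ and of $\varphi$ (the latter ranging over a countable dense family in $C^\infty_x$) is then recovered from the continuity in $t$ of all the resulting terms together with the bound \eqref{ineq-N} and a density argument. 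The membership $\td{\xi}\in C_t(\mc{M}_{x,M,+},w^\ast)\cap L^2_{t,\omega}(H^{-1}_x)$ requires no separate work: the $C_t(\mc{M}_{x,M},w^\ast)$ regularity is built into the Skorokhod space, non-negativity passes to weak-$^\ast$ limits, and the $L^2_{t,\omega}(H^{-1}_x)$ bound follows from Lemma \ref{lem-Hm1_bound} (valid for $\td{\xi}^j$ by Lemma \ref{lem:eq_enlarged}) via lower semicontinuity of the norm under weak convergence. Finally $\td{\xi}_0=\xi_0$ follows from $\td{\xi}^j_0=\xi_0^{\eps_j}\to\xi_0$ weakly-$^\ast$.

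The linear terms pass to the limit routinely. A.s.\ convergence in $C_t(\mc{M}_{x,M},w^\ast)$ yields $\lan\td{\xi}^j_t,\psi\ran\to\lan\td{\xi}_t,\psi\ran$ uniformly in $t$ for each fixed $\psi\in C_x$, which handles the boundary term and, by dominated convergence (the integrands being bounded by $M\|\psi\|_{C_x}$), the $\Delta\varphi$-integral. For the nonlinear term I would invoke the Schochet continuity. For a.e.\ $r$ the limit $\td{\xi}_r$ lies in $H^{-1}_x$, hence is non-atomic by Lemma \ref{lem:no_atoms}, while each $\td{\xi}^j_r$ is a bounded function and therefore non-atomic; all of them are non-negative with mass $\le M$, so $\td{\xi}^j_r,\td{\xi}_r\in\mc{M}_{x,M,+,\mathrm{no-atom}}$. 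The continuity of $N$ on this set (last assertion of Lemma \ref{lem:Poupaud_trick}, resp.\ Lemma \ref{lem:continuity_mass}) then gives $\lan N(\td{\xi}^j_r),\varphi\ran\to\lan N(\td{\xi}_r),\varphi\ran$ for a.e.\ $r$, and the uniform bound $|\lan N(\xi),\varphi\ran|\le C\|\xi\|_{\mc{M}_x}^2\|\varphi\|_{C^2_x}\le CM^2\|\varphi\|_{C^2_x}$ from \eqref{ineq-N} allows dominated convergence in $r$.

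The main obstacle is the stochastic term, because the It\^o integral is not continuous with respect to the joint convergence of integrand and driving noise in the weak topologies at hand. Following \cite{BrzGolJeg2013}, I would approximate the integral by Riemann sums along a partition $\pi=\{t_i\}$ of $[0,t]$, setting $J_\pi(\eta,\gamma):=\sum_k\sum_i\lan\eta_{t_i},\sigma_k\cdot\nabla\varphi\ran(\gamma^k_{t_{i+1}}-\gamma^k_{t_i})$, and argue by a three-$\eps$ estimate. For a \emph{fixed} partition, $J_\pi(\td{\xi}^j,\td{W}^{(j)})\to J_\pi(\td{\xi},\td{W})$ a.s.\ (and in $L^2$) by the a.s.\ convergence of $\td{\xi}^j$ in $C_t(\mc{M}_{x,M},w^\ast)$ and of $\td{W}^{(j)}$ in $C_t^{\mathbb{N}}$; on the other hand, the $L^2$ error between $J_\pi$ and the true stochastic integral tends to $0$ as the mesh of $\pi$ shrinks, \emph{uniformly in $j$}, since the modulus of continuity in $r$ of $\lan\td{\xi}^j_r,\sigma_k\cdot\nabla\varphi\ran$ is controlled by the uniform $C^\alpha_t(H^{-4}_x)$ bound of Lemma \ref{lem:Hm4_bound} (valid for $\td{\xi}^j$ by Lemma \ref{lem:eq_enlarged}), the same bound controlling the error for the limit $\td{\xi}$. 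Combining the three estimates along a diagonal sequence of partitions identifies the limit of the stochastic terms with $\sum_k\int_0^t\lan\td{\xi}_r,\sigma_k\cdot\nabla\varphi\ran\rmd\td{W}^k_r$, the adaptedness required for this integral to be well defined being provided by Lemma \ref{lem:BM_enlarged}. Extracting a further a.s.\ convergent subsequence in the full identity then completes the proof.
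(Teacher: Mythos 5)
Your overall architecture is the same as the paper's: pass to the limit in the weak formulation \eqref{eq:stochEulervort_test}, handle the linear terms by the $C_t(\mc{M}_{x,M},w^\ast)$ convergence, handle the nonlinear term by the Schochet continuity (Lemmas \ref{lem:continuity_nonlin}, \ref{lem:continuity_mass}, \ref{lem:no_atoms}, using that $\td{\xi}_r\in H^{-1}_x$ for a.e.\ $r$ is non-atomic), and handle the stochastic integral by Riemann-sum approximation in the spirit of \cite{BrzGolJeg2013}. The treatment of the linear and nonlinear terms, the identification of the initial datum, and the regularity/positivity claims are all fine and match the paper.

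There is, however, a concrete gap in your treatment of the stochastic term. You claim that the $L^2$ error between the Riemann sum and the true integral vanishes with the mesh \emph{uniformly in $j$} because ``the modulus of continuity in $r$ of $\lan\td{\xi}^j_r,\sigma_k\cdot\nabla\varphi\ran$ is controlled by the uniform $C^\alpha_t(H^{-4}_x)$ bound of Lemma \ref{lem:Hm4_bound}.'' This does not follow: under Assumption \ref{assumption_sigma} the coefficients $\sigma_k$ are only $C^1$, so $\sigma_k\cdot\nabla\varphi$ is merely a $C^1$ vector field and does \emph{not} belong to $H^4_x$; hence the bound $|\lan\td{\xi}^j_t-\td{\xi}^j_s,\sigma_k\cdot\nabla\varphi\ran|\le\|\td{\xi}^j_t-\td{\xi}^j_s\|_{H^{-4}_x}\|\sigma_k\cdot\nabla\varphi\|_{H^4_x}$ is not available, and Lemma \ref{lem:Hm4_bound} gives you no quantitative, $j$-uniform modulus of continuity of $Y_{j,k}$. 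The paper avoids needing one altogether: writing $Y_{j,k}-Y^{K,l}_{j,k}$ as $(Y^{K,l}_{j,k}-Y^{K,l}_{k})+(Y^{K,l}_{k}-Y_{k})+(Y_{k}-Y_{j,k})$, it only uses (i) the \emph{qualitative} a.s.\ continuity of the limit process $Y_k$ (so that $\E\,\rho_k(2^{-l})^2\to0$ by dominated convergence, with no rate), (ii) the uniform convergence $\sup_r|Y_{j,k}(r)-Y_k(r)|\to0$ coming from the $C_t(\mc{M}_{x,M},w^\ast)$ convergence, and (iii) the tail $\sum_{k>K}\|\sigma_k\|_{C_x}^2$, choosing $K$, then $l$, then $j$ in that order. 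Your argument could be repaired either by adopting this splitting or by approximating $\sigma_k\cdot\nabla\varphi$ in $C_x$ by smooth functions and using the uniform mass bound $\|\td{\xi}^j_t\|_{\mc{M}_x}\le M$ to interpolate, but as written the uniformity-in-$j$ step fails.
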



\begin{proof}[Proof of Lemma \ref{lem:limit_sol}] The proof of Lemma \ref{lem:limit_sol} will occupy the remaining parts of the current subsection.

To prove Lemma \ref{lem:limit_sol} we will show \eqref{eq:stochEulervort_test} for $(\td{\xi},\td{W})$ for every test function $\varphi \in C^\infty_x$. By Lemma \ref{lem:eq_enlarged}, for each $j$, $(\td{\xi}^j,\td{W}^{(j)})$ satisfies \eqref{eq:stochEulervort_test} for every $\varphi \in C^\infty_x$. Hence it is enough to pass to the $\td{\mathbb{P}}$-a.s.~limit, as $j\to \infty$, in each term of \eqref{eq:stochEulervort_test} for $(\td{\xi}^j,\td{W}^{(j)})$, possibly choosing a subsequence, for every $t$ and every $\varphi \in C^\infty_x$. We fix $t$ in $[0,T]$ and $\varphi \in C^\infty_x$.

We start with the deterministic linear terms: $\lan \td{\xi}^j_t, \varphi \ran$, $\lan \td{\xi}^j_0, \varphi \ran$ and
\begin{align*}
\int^t_0 \lan \td{\xi}^j_r, c\Delta \varphi \ran \rmd r
\end{align*}
converge $\mathbb{P}$-a.s.~to the corresponding terms without the superscript $j$, thanks to the convergence of $\td{\xi}^j$ to $\td{\xi}$ in $C_t(\mc{M}_{x,M},w^\ast )$.

\subsubsection{The nonlinear term}

Concerning the nonlinear term, we recall Lemma \ref{lem:Poupaud_trick}) and we follow the Schochet argument, see Schochet \cite{Sch1995} and Poupaud \cite[Section 2]{Pou2002}. The first main ingredient for the convergence is the following:

\begin{lemma}\label{lem:continuity_nonlin}
Fix $M>0$. For every $\varphi \in C^2_x$, the map $\mu\mapsto \lan N(\mu),\varphi \ran$ is continuous on the subset $\mc{M}_{x,M,+,\mathrm{no-atom}}$ of $\mc{M}_{x,M}$ of non-negative non-atomic measures with total mass bounded by $M$, endowed with the weak-$^\ast$ topology.
\end{lemma}

We use the following result, a version of the classical Portmanteau theorem (which deals with probability measures rather than non-negative measures):

\begin{lemma}\label{lem:continuity_mass}
Let $X$ be a compact metric space. Assume that $(\nu^k)_k$ is a sequence of non-negative bounded measures and converges to $\nu$ in $(\mc{M}(X),w^\ast )$. Let $F$ be a closed set in $X$ with $\nu(F)=0$ and let $\psi:X\to\mr$ be a bounded Borel function, continuous on $X\setminus F$. Then the sequence $(\lan \nu^k,\psi \ran)_k$ converges to $\lan \nu,\psi \ran$.
\end{lemma}

\begin{proof}
Let $\eps>0$, we have to prove that $|\lan \nu^k-\nu, \psi \ran| <C\eps$ for $k$ large enough, for some constant $C$. The fact that $\nu(F)=0$ implies the existence of $\delta>0$ such that $\nu(\bar{B}(F,\delta))<\eps$, where $\bar{B}(F,\delta):=\{x\in X: d(x,F)\le \delta\}$. As the function $1_{\bar{B}(F,\delta)}$ is upper semi-continuous and $(\nu^k)_k$ converges weakly-$\ast$ to $\nu$, there exists $\bar{k}$ such that $\nu^k(\bar{B}(F,\delta)) <\eps$ for all $k\ge \bar{k}$. By Urysohn lemma, there exists a continuous function $\rho$ with $0\le \rho\le 1$, $\rho=1$ on $F$ and $\rho=0$ on $\bar{B}(F,\delta)^c$; it is easy to see that $\psi(1-\rho)$ is then continuous on all $X$. Now we split
\begin{align}
|\lan \nu^k-\nu, \psi \ran| \le |\lan \nu^k, \psi\rho \ran| +|\lan \nu^k-\nu, \psi(1-\rho) \ran| +|\lan \nu, \psi\rho \ran|.\label{eq:ineq_Pormanteau}
\end{align}
For the first term in the right-hand side, we have
\begin{align} \label{eq:positivity}
|\lan \nu^k, \psi\rho \ran|&\le \sup_k |\nu^k(\bar{B}(F,\delta))| \sup_X|\psi|
\\
&= \sup_k \nu^k(\bar{B}(F,\delta)) \sup_X|\psi| \le \eps \sup_X|\psi|.
\nonumber
\end{align}
The same inequality holds for the third term in the right-hand side of \eqref{eq:ineq_Pormanteau}. Finally, the second term in \eqref{eq:ineq_Pormanteau} is bounded by $\eps$ provided $k$ is large enough, by weak-$^\ast$ convergence of $(\nu^k)_k$. The proof is complete.
\end{proof}

\begin{remark}
It is only in \eqref{eq:positivity} in the proof of Lemma \ref{lem:continuity_mass} that we had  to use that the fact that the process $\xi$ takes values in non-negative measures.
\end{remark}

\begin{proof}[Proof of Lemma \ref{lem:continuity_nonlin}]
We have to show that, for every sequence $(\mu^n)_n$ converging to $\xi \in \mc{M}_{x,M,+,\mathrm{no-atom}}$, the sequence $\lan N(\mu^n),\varphi\ran$ converges to $\lan N(\mu),\varphi\ran$. By Lemma \ref{rmk:cont_prod_meas} in the Appendix, $(\mu^n\otimes\mu^n)_n$ converges weakly-$\ast$ to $\xi\otimes \xi$. Moreover, since $\mu$ has no atoms, then $\mu \otimes \mu$ gives no mass to the diagonal $D=\{(x,y): x=y\}$. Indeed,  by the Fubini theorem,
\begin{align*}
(\mu\otimes\mu)(D) = \int_{\mt^2}\mu(\rmd x) \int_{\{x\}}\mu(\rmd y) = 0.
\end{align*}
We are now in a position to apply Lemma \ref{lem:continuity_mass} to the sequence $(\mu^n\otimes\mu^n)_n$, with the state space $X=\mt^2 \times \mt^2 $, with $F=D$ and with $\psi=F_\varphi$, which is continuous outside the diagonal $D$: we get that
\begin{align*}
\lan \mu^n\otimes\mu^n , F_\varphi \ran \to \lan \mu\otimes\mu , F_\varphi \ran,
\end{align*}
which is exactly the desired convergence. The proof is complete.
\end{proof}

The second ingredient for the convergence of the nonlinear term is the following:

\begin{lemma}\label{lem:no_atoms}
Let $\mu \in \mc{M}_x \cap H^{-1}_x$. Then the measure $\mu$ has no atoms.
\end{lemma}

\begin{proof}[Proof of Lemma \ref{lem:no_atoms}]
Fix $x_0$ in $\mt^2$, we have to prove that $\mu(\{x_0\})=0$. Let $\rho:\mr^2\to\mr$ be a smooth function with $0\le \rho \le 1$, supported on $B_1(0)$ (the ball centered at $0$ with radius $1$) and with $\rho(x)=1$ if and only if $x=0$. For $n$ positive integer, call $\rho_n(x)=\rho(n(x-x_0))$ and take its periodic version on $\mt^2$, which, with small abuse of notation, we continue calling $\rho_n$. Now $(\rho_n)_n$ is a nonincreasing sequence which converges pointwise to $1_{\{x_0\}}$, so $(\lan \mu,\rho_n \ran)_n$ converges to $\mu(\{x_0\})$.

On the other hand $|\lan \mu,\rho_n \ran|\le \|\mu\|_{H^{-1}_x}\|\rho_n\|_{H^1_x}$. For the $H^1_x$ norm of $\rho_n$, we have
\begin{align*}
\|\nabla \rho_n\|_{L^2_x}^2  = \int_{\mr^2} n^2 |\nabla \rho(nx)|^2 \rmd x = \int_{B_{1/n}(0)} |\nabla \rho(y)|^2 \rmd y
\end{align*}
and so $\|\nabla \rho_n\|_{L^2}$ converges to $0$ as $n\to\infty$. In a similar and easier way one sees that $\|\rho_n\|_{L^2}$ converges to $0$. So $\|\rho_n\|_{H^1}$ tends to $0$. Hence $(\lan \mu,\rho_n \ran)_n$ tends also to $0$ and therefore $\mu(\{x_0\})=0$.
\end{proof}

\begin{remark}
It is only for the previous Lemma that we need to use that the process $\xi$ is $H^{-1}_x$-valued.
\end{remark}

We are now able to conclude the convergence of the nonlinear term in \eqref{eq:stochEulervort}. Fix $\omega$ in a full measure set such that $(\td{\xi}^j)_j$ converges to $\td{\xi}$ in $C_t(\mc{M}_{x,M},w^\ast )$. Since $\td{\xi}$ belongs to $L^2_t(H^{-1}_x)$, $\td{\xi}_r$ belongs to $H^{-1}_x$ for all $r$ in a full measure set $S$ of $[0,T]$. In particular, by Lemma \ref{lem:no_atoms}, $\td{\xi}_r$ has no atoms. Hence, for all $r$ in $S$, Lemma \ref{lem:continuity_nonlin} implies the convergence of
\begin{align*}
\lan N(\xi^j),\varphi \ran = \lan \xi^j\otimes\xi^j, F_\varphi\ran
\end{align*}
towards the same term without $j$. By the dominated convergence theorem (in $r$), its time integral converges as well. This proves convergence for the nonlinear term.

\subsubsection{The stochastic integral}

It remains to prove convergence of the stochastic term. We follow the strategy in \cite{BrzGolJeg2013}. We use the notation $t^l_i = 2^{-l}i$, and define, for $t\in [0,T]$,
\begin{align*}
&Y_{j,k}(t) = \lan \td{\xi}^j_t, \sigma_k\cdot\nabla \varphi_t \ran,\\
&Y_{j,k}^{K,l}(t) = 1_{\{k\le K\}}\sum_i Y_{j,k}(t^l_i)1_{[t^l_i,t^l_{i+1}[}(t)
\end{align*}
and similarly without $j$. Finally we call $\rho_{j,k}$ the modulus of continuity of $Y_{j,k}$, namely
\begin{align*}
\rho_{j,k}(a) = \sup \{  |Y_{j,k}(t)-Y_{j,k}(s)|: t,s, \in [-,T] \mbox{ and } |t-s|\le a\}
\end{align*}
and similarly without $j$. Note that $\rho_{j,k}(a)$ and $\rho_k(a)$ are $\tilde{\mc{F}}_{T}$-measurable on $\tilde{\Omega}$, since the above supremum can be restricted to rational times $t,s$. We split
\begin{align*}
&\left|\sum_k \int^t_0 \lan \td{\xi}^j, \sigma_k\cdot\nabla \varphi \ran \rmd \td{W}^{(j),k} - \int^t_0 \lan \td{\xi}, \sigma_k\cdot\nabla \varphi \ran \rmd \td{W}^{k} \right|\\
&\le \left| \sum_k \int^t_0 (Y_{j,k}-Y^{K,l}_{j,k}) \rmd \td{W}^{(j),k} \right|\\
&\ \ \ + \left| \sum_k \int^t_0 Y^{K,l}_{j,k} \rmd \td{W}^{(j),k} - \int^t_0 Y^{K,l}_{k} \rmd \td{W}^{k} \right|\\
&\ \ \ + \left| \sum_k \int^t_0 (Y_{k}-Y^{K,l}_{k}) \rmd \td{W}^{k} \right|\\
&=:T_1+T_{2}+T_{3}.
\end{align*}
Concerning the first term  $T_1$, we have
\begin{align*}
\E\vert  T_1 \vert^2 = \sum_k \E \int^t_0 |Y_{j,k}-Y^{K,l}_{j,k}|^2 \rmd r.
\end{align*}
In order to have uniform estimates with respect to $j$, we want to use the convergence in $C_t$ of $Y^{j,k}$ to $Y^k$. For this, we split again the right-hand side:
\begin{align*}
&\sum_k \E \int^t_0 |Y_{j,k}-Y^{K,l}_{j,k}|^2 \rmd r\\
&\le C\sum_k \E \int^t_0 |Y^{K,l}_{j,k}-Y^{K,l}_{k}|^2 \rmd r + C \sum_k \E \int^t_0 |Y_{k}-Y^{K,l}_{k}|^2 \rmd r + C \sum_k \E \int^t_0 |Y_{j,k}-Y_{k}|^2 \rmd r\\
&=:T_{11}+T_{12}+T_{13}.
\end{align*}
For $T_{11}$, we have
\begin{align*}
&T_{11}= C \sum_{k\le K} \E \int^t_0 |Y^{K,l}_{j,k}-Y^{K,l}_{k}|^2 \rmd r\\
&\le C \sum_{k\le K} \E \sup_r |Y_{j,k}(r)-Y_{k}(r)|^2
\end{align*}
For $T_{13}$, we have similarly
\begin{align*}
&T_{13}= C \sum_{k\le K} \E \int^t_0 |Y_{j,k}-Y_{k}|^2 \rmd r + C \sum_{k>K} \E \int^t_0 |Y_{j,k}-Y_{k}|^2 \rmd r\\
&\le C \sum_{k\le K} \E \sup_r |Y_{j,k}(r)-Y_{k}(r)|^2 + C \sum_{k>K} \|\sigma_k\|_{C_x}^2
\end{align*}
where we have used that $\sup_r|Y_{j,k}(r)|\le C\|\sigma_k\|_{C_x}^2$. The constant $C$ here  depends on $M$, the upper bound of $\|\xi^j\|_{\mc{M}_x}$ and $\varphi$. For $T_{12}$, we have
\begin{align*}
&T_{12} = C \sum_{k\le K} \E \int^t_0 |Y_{k}-Y^{K,l}_{k}|^2 \rmd r + C \sum_{k>K} \E \int^t_0 |Y_{k}|^2 \rmd r\\
&\le C \sum_{k\le K} \E \rho_{k}(2^{-l})^2 + C \sum_{k>K} \|\sigma_k\|_{C_x}^2.
\end{align*}
This complete the bound for $T1$. Concerning the term $T_{3}$, we have
\begin{align*}
\E \vert T_{3}\vert^2 = \sum_k \E \int^t_0 |Y_{k}-Y^{K,l}_{k}|^2 \rmd r,
\end{align*}
which is the term $T_{12}$ up to a multiplicative constant and can therefore bounded as $T_{12}$. Finally, we note that the term $T_{2}$ can be written as
\begin{align*}
T_{2} = \left| \sum_{k\le K} \sum_i [Y_{j,k}(t^l_i) (W^{(j),k}_{t^l_{i+1}}-W^{(j),k}_{t^l_i}) - Y_{k}(t^l_i) (W^{k}_{t^l_{i+1}}-W^{k}_{t^l_i})] \right|
\end{align*}
Putting all together, we find
\begin{align*}
&\E \left|\sum_k \int^t_0 \lan \td{\xi}^j, \sigma_k\cdot\nabla \varphi \ran \rmd \td{W}^{(j),k} - \int^t_0 \lan \td{\xi}, \sigma_k\cdot\nabla \varphi \ran \rmd \td{W}^{k} \right|^2\\
&\le C \sum_{k\le K} \E \rho_{k}(2^{-l})^2 + C \sum_{k>K} \|\sigma_k\|_{C_x}^2
 + C \sum_{k\le K} \E \sup_r |Y_{j,k}(r)-Y_{k}(r)|^2\\
&\ \ \ + C\E \left| \sum_{k\le K} \sum_i [Y_{j,k}(t^l_i) (W^{(j),k}_{t^l_{i+1}}-W^{(j),k}_{t^l_i}) - Y_{k}(t^l_i) (W^{k}_{t^l_{i+1}}-W^{k}_{t^l_i})] \right|^2.
\end{align*}
We first choose a natural number  $K$ such that $\sum_{k>K} \|\sigma_k\|_{C_x}^2<\eps$. For each $k$, since $Y_k$ is a continuous function, $\rho_{k}(2^{-l})$ converges to $0$ as $l\to\infty$ $\td{\mathbb{P}}$-a.s.. Moreover $Y_k$ is also essentially bounded, therefore, by dominated convergence theorem, $\E \rho_{k}(2^{-l})^2$ also converges to $0$. Hence, for $K$ fixed as before, we can choose $l$ such that
\begin{align*}
\sum_{k\le K}\E \rho_{k}(2^{-l})^2<\eps.
\end{align*}
Again for each $k$, due to the convergence of $\td{\xi}^j$ in $C_t(\mc{M}_{x,M},w^\ast )$, we infer that $\sup_r |Y_{j,k}(r)-Y_{k}(r)|^2$ converges to $0$ as $j\to \infty$ $\td{\mathbb{P}}$-a.s.. Moreover $Y_{j,k}$ are bounded uniformly in $j$, therefore, by dominated convergence theorem, $\E \sup_r |Y_{j,k}(r)-Y_{k}(r)|^2$ also converges to $0$. Hence, for $K$ fixed as before, we can choose $\bar{j}\in\mathbb{N}$ such that, for every $j\ge \bar{j}$,
\begin{align*}
\sum_{k\le K}\E \sup_r |Y_{j,k}(r)-Y_{k}(r)|^2<\eps.
\end{align*}
Finally, for $K$, $l$ fixed as before, the term
\begin{align*}
\sum_{k\le K} \sum_i [Y_{j,k}(t^l_i) (W^{(j),k}_{t^l_{i+1}}-W^{(j),k}_{t^l_i}) - Y_{k}(t^l_i) (W^{k}_{t^l_{i+1}}-W^{k}_{t^l_i})]
\end{align*}
converges to $0$ as $j\to\infty$ $\td{\mathbb{P}}$-a.s.. Therefore, by the Lebesgue Dominated Convergence Theorem, also its second moment converges to $0$. Hence, for $K$, $l$ fixed as before, we can choose  $\bar{j}\in\mathbb{N}$ such that, for every $j\ge \bar{j}$,
\begin{align*}
\E \left| \sum_{k\le K} \sum_i [Y_{j,k}(t^l_i) (W^{(j),k}_{t^l_{i+1}}-W^{(j),k}_{t^l_i}) - Y_{k}(t^l_i) (W^{k}_{t^l_{i+1}}-W^{k}_{t^l_i})] \right|^2 <\eps.
\end{align*}
This proves that the stochastic term in \eqref{eq:stochEulervort} converges in $L^2_\omega$ norm, and so $\td{\mathbb{P}}$-a.s.~up to subsequences.

We have proved that all the terms in \eqref{eq:stochEulervort_test} passes to the $\td{\mathbb{P}}$-a.s.~limit, up to subsequences, and therefore $\td{\xi}$ is a solution to \eqref{eq:stochEulervort_test}, so to \eqref{eqn: Euler stoch vorticity form-Intro} with $\td{W}$ as a Brownian Motion. The proof of Lemma \ref{lem:limit_sol} is thus complete.
\end{proof}

\begin{appendices}

\section{The nonlinear term in the Euler Equations}
\label{app-nonlinear term}

\begin{proof}[Proof of Lemma \ref{lem:Poupaud_trick}]
A proof of this  Lemma is essentially due to Schochet \cite[Lemma 3.2 and discussion thereafter]{Sch1995}. We use here a version due to  Poupaud \cite[Section 2]{Pou2002}.
\begin{itemize}
\item Since $K$ is a smooth function outside the diagonal $\{x=y\}$, also the function $F_\varphi$ is smooth outside the diagonal. Recall that by Lemma \ref{lem:Green_function} we have $|K(x-y)|\le C|x-y|^{-1}$,  $(x,y)\in \mt^2$. Therefore, since  $\nabla \varphi$ is Lipschitz, we infer that
\begin{align*}
|F_\varphi(x,y)|\le \frac12 |K(x-y)|\|D^2\varphi\|_{C_x} |x-y|\le C\|\varphi\|_{C^2_x},
\end{align*}
which gives the desired bound on $F_\varphi$. The zero-average property \eqref{eqn-F average} of $F_\varphi$ is a consequence of the fact that $K$ is divergence-free.
\item For every $\xi \in \mc{M}_x$, for every $\varphi \in C^2_x$, $F_\varphi$ is bounded and so $\lan N(\xi),\varphi\ran$ is well-defined and
\begin{align*}
&|\lan N(\xi),\varphi\ran| = \left|\int\int F_\varphi(x,y) \xi(\rmd x)\xi(\rmd y)\right|\\
&\le C\|\varphi\|_{C^2_x} \|\xi^{\otimes 2}\|_{\mc{M}_{x,y}} \le C\|\varphi\|_{H^4_x} \|\xi\|_{\mc{M}_x}^2,
\end{align*}
where we used the Sobolev embedding in the last inequality. In particular $N(\xi)$ is a well-defined linear bounded functional on $H^4$.
\item By Lemma \ref{rmk:cont_prod_meas}  the map $\mc{M}_x\ni\xi\mapsto \xi\otimes \xi\in \mc{M}_{x,y}$ is Borel (with respect to the Borel $\sigma$-algebras generated by the weak-$^\ast$ topologies on $\mc{M}_x$ and $\mc{M}_{x,y}$). Moreover, by Lemma \ref{rmk:bounded_test_Borel},   for every $\varphi \in C^2_x$, the map
\begin{align*}
\mc{M}_{x,y}\ni \mu\mapsto \lan \mu, F_\varphi \ran \in \mr
\end{align*}
is Borel. Therefore, for every $\varphi \in C^2_x$, the function
\begin{align*}
\mc{M}_x\ni\xi\mapsto \lan N(\xi),\varphi\ran \in \mr
\end{align*}
is Borel. Hence we deduce that  $N: \mc{M}_x \to  H^{-4}_x$ is weakly-$\ast$ Borel. Since $H^{-4}_x$ is a separable reflexive Banach space, by Lemma \ref{lem:equiv_topol} we infer that $N$ is also Borel.
\item Recall that $K$ is odd by Lemma \ref{lem:Green_function}, therefore with $u=K\ast \xi$ we have
\begin{align*}
&\int \xi(x) u(x)\cdot \nabla \varphi(x) \rmd x = \int\int \xi(x)\xi(y) K(x-y) \cdot\nabla\varphi(x) \rmd x\rmd y\\
&= - \int\int \xi(x)\xi(y) K(y-x) \cdot\nabla\varphi(x) \rmd x\rmd y\\
&= - \int\int \xi(x)\xi(y) K(x-y) \cdot\nabla\varphi(y) \rmd x\rmd y,
\end{align*}
where the  last equality follows by  swapping  $x$ with $y$. Hence
\begin{align*}
\int \xi(x) u(x)\cdot \nabla \varphi(x) \rmd x &=\int \xi(x) u(x)\cdot \nabla \varphi(x) \rmd x\\
&\hspace{-1truecm}= \frac12 \int\int \xi(x)\xi(y) K(x-y) \cdot (\nabla_x\varphi(x) -\nabla_y\varphi(y)) \rmd x\rmd y.
\end{align*}
\item Continuity of $N$ on $\mc{M}_{x,M,+,\mathrm{no-atom}}$ follows from Lemma \ref{lem:continuity_mass}.
\item The fact that $N(\xi+\alpha)=N(\xi)$ follows from the zero-average property \eqref{eqn-F average} of $F_\varphi$.
\end{itemize}
\end{proof}

\section{Some technical lemmata}
\label{app-B}

\begin{lemma}\label{lem:H_Borel}
Assume that the vector fields $\sigma_k$, $k\in \mathbb{N}$,  satisfy Assumption \ref{assumption_sigma}.  Then the following three  maps
\begin{align}
\begin{aligned}\label{eq:maps_M_H}
&\mc{M}_x \ni \mu \mapsto \mu \in H^{-4}_x,\\
&\mc{M}_x \ni \mu \mapsto \sigma_k\cdot\nabla \mu \in H^{-4}_x,\quad k\in\mathbb{N},\\
&\mc{M}_x \ni \mu \mapsto \Delta \mu \in H^{-4}_x
\end{aligned}
\end{align}
are well defined and  linear and continuous in both the norm-to-norm and the weak-$^\ast$-to-weak sense and there exists  a number $C>0$ such that for every $\mu \in\mc{M}_x$,
\begin{align}\label{eqn-B2}
\|\mu\|_{H^{-4}_x} +\|\Delta \mu\|_{H^{-4}_x} \le C\|\mu\|_{\mc{M}_x},\\
\|\sigma_k\cdot\nabla \mu\|_{H^{-4}_x}\le C\|\sigma_k\|_{C_x} \|\mu\|_{\mc{M}_x}.
\end{align}
Moreover,  if  $\mc{M}_x$ is endowed with the weak-$^\ast$ topology,  these three maps are  Borel.
\end{lemma}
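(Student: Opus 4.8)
The plan is to realize each of the three maps as the Banach-space adjoint of an explicit bounded operator from $H^4_x$ into $C_x$, exploiting the duality identifications $\mc{M}_x = (C_x)^\ast$ and $H^{-4}_x = (H^4_x)^\ast$. The single piece of analytic input is the Sobolev embedding $H^4_x \hookrightarrow C^2_x$, valid on $\mt^2$ since $4 - 2/2 = 3 > 2$, which furnishes a constant $C>0$ with $\|\varphi\|_{C^2_x} \le C\|\varphi\|_{H^4_x}$ for all $\varphi\in H^4_x$.

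First I would introduce, for each target map $T$ among $\mu\mapsto\mu$, $\mu\mapsto\sigma_k\cdot\nabla\mu$ and $\mu\mapsto\Delta\mu$, a pre-adjoint operator $S:H^4_x\to C_x$, namely $S\varphi=\varphi$, $S\varphi=-\sigma_k\cdot\nabla\varphi$ (the sign and the absence of a zeroth-order term both coming from $\diverg\,\sigma_k=0$, so that $\nabla\cdot(\sigma_k\varphi)=\sigma_k\cdot\nabla\varphi$), and $S\varphi=\Delta\varphi$ respectively. Each such $S$ indeed lands in $C_x$ and is bounded, since $\|\varphi\|_{C_x}\le C\|\varphi\|_{H^4_x}$, $\|\sigma_k\cdot\nabla\varphi\|_{C_x}\le\|\sigma_k\|_{C_x}\|\varphi\|_{C^1_x}\le C\|\sigma_k\|_{C_x}\|\varphi\|_{H^4_x}$, and $\|\Delta\varphi\|_{C_x}\le C\|\varphi\|_{H^4_x}$. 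I would then define $T\mu\in H^{-4}_x$ by $\langle T\mu,\varphi\rangle := \langle\mu,S\varphi\rangle$ for $\varphi\in H^4_x$; the elementary estimate $|\langle T\mu,\varphi\rangle|\le\|\mu\|_{\mc{M}_x}\|S\varphi\|_{C_x}$ shows $T\mu$ is a bounded functional on $H^4_x$ and delivers precisely the bounds \eqref{eqn-B2}. Linearity and norm-to-norm continuity of $T$ are then immediate, and by construction $T=S^\ast$.

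The weak-$^\ast$-to-weak continuity then comes for free: as the adjoint of the bounded operator $S:H^4_x\to C_x$, the map $T=S^\ast$ is automatically weak-$^\ast$-to-weak-$^\ast$ continuous from $(C_x)^\ast=\mc{M}_x$ to $(H^4_x)^\ast=H^{-4}_x$; since $H^{-4}_x$ is a separable Hilbert space, hence reflexive, its weak-$^\ast$ and weak topologies coincide, which is exactly the asserted weak-$^\ast$-to-weak continuity. For the final Borel claim, I would note that for each fixed $\varphi\in H^4_x$ the scalar map $\mc{M}_x\ni\mu\mapsto\langle T\mu,\varphi\rangle=\langle\mu,S\varphi\rangle$ is weak-$^\ast$ continuous, being a coordinate functional of the weak-$^\ast$ topology as $S\varphi\in C_x$, and hence Borel on $(\mc{M}_x,w^\ast)$; this makes $T$ Borel as a map into $(H^{-4}_x,w)$, and Lemma \ref{lem:equiv_topol} identifies the weak and the norm Borel $\sigma$-algebras on the separable reflexive space $H^{-4}_x$, upgrading $T$ to a norm-Borel map.

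I expect no serious obstacle here: the entire content reduces to the Sobolev embedding together with the soft fact that adjoints are weak-$^\ast$ continuous. The only point requiring genuine care is the bookkeeping of which topology is weak and which is weak-$^\ast$ on each dual space, and in particular the use of reflexivity of $H^{-4}_x$ to collapse weak-$^\ast$ into weak so that the adjoint continuity matches the statement.
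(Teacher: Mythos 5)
Your proposal is correct and follows essentially the same route as the paper: both proofs test against $\varphi\in H^4_x$, use the Sobolev embedding $H^4_x\hookrightarrow C^2_x$ to see that $\varphi$, $\sigma_k\cdot\nabla\varphi$ and $\Delta\varphi$ lie in $C_x$ with the stated bounds (the minus sign and the absence of a zeroth-order term coming from $\diverg\sigma_k=0$), and deduce the norm and weak-$^\ast$-to-weak continuity by duality, with the Borel claim following from Lemma \ref{lem:equiv_topol}. Your packaging of the argument as $T=S^\ast$ for a pre-adjoint $S:H^4_x\to C_x$ is only a cosmetic reformulation of what the paper does.
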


\begin{proof}
The maps in \eqref{eq:maps_M_H}, tested against a test function $\varphi$, read formally
\begin{align*}
&\mu \mapsto \lan \mu,\varphi \ran,\\
&\mu \mapsto -\lan \mu,\sigma_k\cdot\nabla \varphi\ran,\\
&\mu \mapsto \lan \mu, \Delta \varphi\ran.
\end{align*}
Now, by Sobolev embedding, for any $\varphi \in H^4_x$, the functions $\varphi$, $\sigma_k\cdot\nabla \varphi$, $\Delta\varphi$ are continuous with
\begin{align*}
&\|\varphi\|_{C_x} \le C\|\varphi\|_{H^4_x},\\
&\|\sigma_k\cdot\nabla \varphi\|_{C_x} \le \|\sigma_k\|_{C_x} \|\nabla\varphi\|_{C_x} \le C\|\sigma_k\|_{C_x} \|\nabla\varphi\|_{H^4_x},\\
&\|\Delta\varphi\|_{C_x} \le C\|\varphi\|_{H^4_x}.
\end{align*}
Hence the maps in \eqref{eq:maps_M_H} are weak-$^\ast$-to-weak continuous. Taking the supremum over $\varphi$ in the unit ball of $H^4_x$, we get also the norm-to-norm continuity and the desired bounds.
\end{proof}

\begin{lemma}\label{lem:stochEulervort_H}
Assume Assumption \ref{assumption_sigma} on $\sigma_k$ and  fix $M>0$. Then for any object $(\Omega,\mc{A},\mathbb{F},\mathbb{P},W,\xi)$, where $\mathbb{F}=(\mc{F}_t)_t$ and $W=(W^k)_k$ is a filtration and, respectively an $\mathbb{F}$-cylindrical Wiener process on a filtered probability space $(\Omega,\mc{A}, \mathbb{F},\mathbb{P})$ satisfying  the usual assumptions,  and $\xi:[0,T]\times\Omega\to\mc{M}_{x,M}$ is $\mc{B}\mathbb{F}$ Borel measurable function, equality  \eqref{eq:stochEulervort} holds if and only if equality \eqref{eq:stochEulervort_test} holds for every $\varphi \in C^\infty_x$.
\end{lemma}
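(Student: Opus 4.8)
The plan is to move between the two formulations using the duality $H^{-4}_x=(H^4_x)^\ast$, applying the bounded linear functional $\lan\cdot,\varphi\ran$ to the $H^{-4}_x$-valued identity \eqref{eq:stochEulervort} and, conversely, recovering \eqref{eq:stochEulervort} from the scalar identities \eqref{eq:stochEulervort_test} by a density argument. First I record that all three Bochner integrals and the It\^o integral in \eqref{eq:stochEulervort} are genuine $H^{-4}_x$-valued objects: the drift terms $\int_0^t N(\xi_r)\rmd r$ and $\frac{c}{2}\int_0^t\Delta\xi_r\rmd r$ are Bochner integrals of essentially bounded $H^{-4}_x$-valued integrands, by \eqref{ineq-N}, Lemma \ref{lem:H_Borel} and the bound $\|\xi_r\|_{\mc{M}_x}\le M$, while the stochastic integral is well defined thanks to \eqref{eq:welldef_stoch_int}.

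\textbf{Forward direction.} Fix $\varphi\in C^\infty_x\subset H^4_x$ and apply the bounded linear functional $\lan\cdot,\varphi\ran\colon H^{-4}_x\to\mr$ to both sides of \eqref{eq:stochEulervort}. Since a bounded linear functional commutes with Bochner integration, $\lan\int_0^t N(\xi_r)\rmd r,\varphi\ran=\int_0^t\lan N(\xi_r),\varphi\ran\rmd r$, and likewise for the Laplacian term. For the stochastic term I use the standard fact that a bounded linear operator commutes with the Hilbert-space-valued It\^o integral, see \cite{DaPZab2014}, which together with the summability \eqref{eq:welldef_stoch_int} gives
\begin{align*}
\Big\lan\sum_k\int_0^t\sigma_k\cdot\nabla\xi_r\rmd W^k_r,\varphi\Big\ran=\sum_k\int_0^t\lan\sigma_k\cdot\nabla\xi_r,\varphi\ran\rmd W^k_r\qquad\mathbb{P}\text{-a.s.}
\end{align*}
It remains to identify the scalar integrands. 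By the integration-by-parts formulas established in the proof of Lemma \ref{lem:H_Borel} (using that each $\sigma_k$ is divergence-free) one has $\lan\sigma_k\cdot\nabla\xi_r,\varphi\ran=-\lan\xi_r,\sigma_k\cdot\nabla\varphi\ran$ and $\lan\Delta\xi_r,\varphi\ran=\lan\xi_r,\Delta\varphi\ran$, while the nonlinear term is matched by the very definition of the pairing of $N(\xi_r)\in H^{-4}_x$ against $\varphi$ in Lemma \ref{lem:Poupaud_trick}. Substituting these identities yields exactly \eqref{eq:stochEulervort_test}; moreover, since both sides are continuous in $t$ (the It\^o integral admits a continuous $H^{-4}_x$-valued modification and the Bochner integrals are continuous), the resulting exceptional set can be taken independent of $t$.

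\textbf{Converse direction.} Suppose \eqref{eq:stochEulervort_test} holds for every $\varphi\in C^\infty_x$, with exceptional set independent of $t$. Let $Z_t\in H^{-4}_x$ be the difference between the left- and right-hand sides of \eqref{eq:stochEulervort}, a well-defined $H^{-4}_x$-valued random variable by the remarks above. Reversing the computation of the forward direction, \eqref{eq:stochEulervort_test} says precisely that, for each fixed $\varphi\in C^\infty_x$, $\lan Z_t,\varphi\ran=0$ for all $t$, $\mathbb{P}$-a.s. Choosing a countable family $(\varphi_j)_j\subset C^\infty_x$ dense in $H^4_x$ and intersecting the corresponding full-measure events over $j$, we obtain a single full-measure event on which $\lan Z_t,\varphi_j\ran=0$ for every $j$ and every $t$. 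Since $Z_t$ is a bounded functional on $H^4_x$ and $(\varphi_j)_j$ is dense, this forces $Z_t=0$ in $H^{-4}_x$ for every $t$, $\mathbb{P}$-a.s., which is \eqref{eq:stochEulervort}. The only non-elementary ingredient, and hence the main point to be careful about, is the commutation of the functional $\lan\cdot,\varphi\ran$ with the $H^{-4}_x$-valued It\^o integral against the cylindrical Wiener process together with the summability in $k$; everything else reduces to density of $C^\infty_x$ in $H^4_x$ and the continuity and integration-by-parts identities of Lemma \ref{lem:H_Borel}.
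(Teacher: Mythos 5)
Your proposal is correct and follows essentially the same route as the paper: the forward direction applies the bounded functional $\lan\cdot,\varphi\ran$ to \eqref{eq:stochEulervort} and commutes it with the Bochner and It\^o integrals (justified by Lemma \ref{lem:H_Borel} and \eqref{eq:welldef_stoch_int}), and the converse recovers the $H^{-4}_x$-valued identity from the tested version by running the computation backwards and using a countable subset of $C^\infty_x$ dense in $H^4_x$ to make the exceptional set uniform. Your write-up is somewhat more explicit than the paper's about the commutation with the stochastic integral and about the final density step, but there is no substantive difference in the argument.
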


\begin{proof}
Assume that \eqref{eq:stochEulervort} holds, fix $\varphi \in C^\infty_x$. Then \eqref{eq:stochEulervort_test} follows by applying the linear continuous functional $\lan \cdot,\varphi\ran$ to \eqref{eq:stochEulervort} and exchanging the functional with the integrals.

Conversely, assume that \eqref{eq:stochEulervort_test} holds for every $\varphi \in C^\infty_x$. Lemma \ref{lem:H_Borel} implies that $\xi$ and all the integrands in \eqref{eq:stochEulervort} are progressively measurable as $H^{-4}_x$ processes and that the deterministic and stochastic integrals are well-defined (see \eqref{eq:welldef_stoch_int}). Now we have for every test function $\varphi \in C^\infty_x$, by \eqref{eq:stochEulervort_test} exchanging $\lan \cdot,\varphi\ran$ and the integrals,
\begin{align*}
\lan \xi_t,\varphi \ran &= \lan \xi_0,\varphi\ran +\lan \int^t_0 N(\xi_r) \rmd r, \varphi \ran\\
&\ \ \ -\lan \sum_k \int^t_0 \sigma_k\cdot\nabla \xi_r \ran \rmd W^k_r , \varphi \ran\\
&\ \ \ +\frac{c}2 \lan \int^t_0 \Delta \xi_r \rmd r, \varphi \ran, \quad\text{for every }t,\quad \mathbb{P}-\text{a.s.},
\end{align*}
that is \eqref{eq:stochEulervort} tested against $\varphi$, where the $\mathbb{P}$-exceptional set can depend on $\varphi$. Taking $\varphi$ in a countable set of $C^\infty_x$, dense in $H^4_x$, we deduce \eqref{eq:stochEulervort} in $H^{-4}_x$. The proof is complete.
\end{proof}


\begin{remark}\label{rmk:Linfty_sol}
Let $\xi^\eps_t(\omega)=(\Phi^\eps(t,\cdot,\omega))_\# \xi^\eps_0$ be defined as at the beginning of Section \ref{sec:apriori_bd}. For a.e.~$\omega$, for every $t$, $\xi^\eps_t \in \mc{M}_{x,M}$.  Moreover, the process $\lan \xi^\eps_t,\varphi\ran$ is progressively measurable and, because it satisfies \eqref{eq:stochEulervort_test},  continuous  for every $\varphi \in C^\infty_x$, and hence, by the density of $C^\infty_x$ in $C_x$, for every $\varphi \in C_x$. Hence, by Lemma \ref{rmk:Borel_weakstar} and Remark \ref{rmk:restriction}, up to redefining $\xi^\eps=0$ on the $\mathbb{P}$-exceptional set where $\xi^\eps_t$ is not in $\mc{M}_{x,M^\eps}$ for some $t$, $\xi^\eps$ is $\mc{B}\mathbb{F}$ Borel as $\mc{M}_x$-valued map and satisfies \eqref{eq:stochEulervort_test} for every $\varphi \in C^\infty_x$. By Lemma \ref{lem:stochEulervort_H}, $\xi^\eps$ is a $\mc{M}_{x,M}$-valued solution in the sense of Definition \ref{def:sol}.
\end{remark}

Before we can embark on the proof of Lemma \ref{lem:velocity_eq} we formulate the following axially result.

\begin{lemma}\label{lem-auxilairy identity}
  If the $C^2$-class vector fields  $v,w:\mt^2\to\mr^2$  are such that $\divv v=0$, then
\begin{align}
\curl [v\cdot\nabla w +(Dv)^{\mathrm{t}} w] = v\cdot\nabla (\curl  w).
\label{eqn-vector_equality curl}
\end{align}
Here $Dv= \begin{pmatrix}                                D_1v^1 & D_2v^1 \\
                                D_1 v^2 & D_2 v^2
                              \end{pmatrix}$
 and ${}^{\mathrm{t}}$ is the transpose operation.
\end{lemma}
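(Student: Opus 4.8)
The plan is to verify the identity by a direct computation in coordinates, the only structural input being the divergence-free condition on $v$. Writing $v=(v^1,v^2)$ and $w=(w^1,w^2)$, and recalling that in two dimensions $\curl f = D_1 f^2 - D_2 f^1$ for $f=(f^1,f^2)$, I would first record the components of the vector field $g := v\cdot\nabla w + (Dv)^{\mathrm{t}} w$. From the definition of $Dv$ and of the transpose one has $((Dv)^{\mathrm{t}} w)^i = \sum_j (D_i v^j)\, w^j$, so that
\[
g^i = \sum_j v^j D_j w^i + \sum_j (D_i v^j)\, w^j, \qquad i=1,2.
\]

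Next I would compute $\curl g = D_1 g^2 - D_2 g^1$ by expanding each summand with the product rule. This produces three kinds of terms: (i) a component of $v$ times a second derivative of $w$; (ii) a first derivative of $v$ times a first derivative of $w$; and (iii) a second derivative of $v$ times a component of $w$. Since $v\in C^2$, the two type-(iii) terms cancel in pairs by the symmetry $D_1 D_2 v^l = D_2 D_1 v^l$, and two pairs of type-(ii) terms cancel in the same way.

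The decisive bookkeeping step is to collect what survives. Using equality of mixed partials once more, the remaining type-(i) terms assemble exactly into
\[
v^1 D_1(D_1 w^2 - D_2 w^1) + v^2 D_2(D_1 w^2 - D_2 w^1) = v\cdot\nabla(\curl w),
\]
while the four surviving type-(ii) terms factor as
\[
(D_1 v^1 + D_2 v^2)(D_1 w^2 - D_2 w^1) = (\divv v)(\curl w).
\]
Since $\divv v = 0$ by hypothesis, this contribution vanishes and the identity \eqref{eqn-vector_equality curl} follows.

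I expect the main (indeed the only) difficulty to be the careful accounting of the sixteen terms generated by the product rule, rather than any conceptual obstacle; the computation is routine once one is organized. It is worth noting that the extra term $(Dv)^{\mathrm{t}} w$ is precisely what forces the first-order cross terms to collapse into the single factor $(\divv v)(\curl w)$: geometrically $g$ is the coordinate expression of the Lie derivative $\mathcal{L}_v w^\flat$ of the $1$-form dual to $w$, and the identity is then an instance of the commutation $d\,\mathcal{L}_v = \mathcal{L}_v\, d$ together with the fact that $\mathcal{L}_v$ acts on the area form by $v\cdot\nabla(\cdot)$ when $\divv v = 0$. I would keep the elementary computation as the formal proof and mention this interpretation only as motivation.
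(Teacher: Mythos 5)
Your proof is correct and follows essentially the same route as the paper: a direct coordinate computation of $\curl\bigl[v\cdot\nabla w+(Dv)^{\mathrm{t}}w\bigr]$ in which the mixed second derivatives of $v$ cancel and the surviving first-order cross terms factor as $(\divv v)(\curl w)$, exactly the identity \eqref{eqn-B.08} the paper derives before imposing $\divv v=0$. The only difference is organizational (the paper splits the calculation into the two intermediate identities \eqref{eqn-B.06} and \eqref{eqn-B.07}), so no changes are needed.
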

 \begin{proof}[Proof of Lemma \ref{lem-auxilairy identity}]
 The expression $(Dv)^{\mathrm{t}} w$ is understood here as the $2\times 1$ matrix  that is equal to the  multiplication of a $2\times 1$ matrix $\wek{w^1}{w^2}$   by a $2\times 2$ matrix $\mat{D_1v^1}{ D_1v^2}{ D_2 v^1}{D_2 v^2}$, i.e.
\begin{align}
(Dv)^{\mathrm{t}} w:= \begin{pmatrix}D_1v^1 & D_1v^2 \\
                                D_2 v^1 & D_2 v^2
                              \end{pmatrix} \wek{w^1}{w^2}
                                            =\wek{w^1 D_1v^1   +  w^2 D_1v^2}{ w^1 D_2 v^1 + w^2 D_2 v^2 }
\label{eqn-B.05}
\end{align}
Therefore, since $\curl u:= D_1u^2-D_2u^1$ and $D_1D_2-D_2D_1=0$ we get
\begin{align}\nonumber
\curl[ (Dv)^{\mathrm{t}} w]&= D_1( w^1 D_2 v^1 + w^2 D_2 v^2 ) -D_2( w^1 D_1v^1   +  w^2 D_1v^2)
\\&= \bigl( D_1D_2 v^1-   D_2D_1v^1 \bigr)w^1+ \bigl( D_1D_2 v^2- D_2D_1v^2 \bigr)w^2
\nonumber
\\&+ D_1 w^1 D_2 v^1 + D_1 w^2 D_2 v^2  -D_2 w^1 D_1v^1   -D_2 w^2 D_1v^2
\nonumber
\\&= D_1 w^1 D_2 v^1 + D_1  w^2 D_2 v^2  -D_2  w^1 D_1v^1    -D_2    w^2 D_1v^2.
\label{eqn-B.06}
\end{align}
Now we will prove another auxiliary identity:
\begin{align}
\curl[ v\cdot \nabla w]=  v\cdot \nabla \curl[ w]+ \sum_{i} (D_1 v^iD_iw^2  -D_2 v^iD_iw^1 )
\label{eqn-B.07}
\end{align}
In the above formula, by $v\cdot \nabla w$ we mean the following vector field $\sum_{i} v^iD_iw$ and for $\mathbb{R}$-valued function $f$, we put
$v\cdot \nabla f:=\sum_{i} v^iD_if$.

Let us now prove identity  \eqref{eqn-B.07}.
\begin{align*}
\curl[ v\cdot \nabla w]&=
\curl\bigl[ \sum_{i} v^iD_iw\bigr]= D_1\bigl[ \sum_{i} v^iD_iw^2 \bigr] -D_2\bigl[ \sum_{i} v^iD_iw^1 \bigr]
\\&=  \sum_{i} v^i D_1 D_iw^2  + \sum_{i} D_1 v^iD_iw^2
\\&-  \sum_{i} v^i D_2 D_iw^1  - \sum_{i} D_2 v^iD_iw^1
 \\&=  \sum_{i} v^i ( D_1 D_iw^2 -D_2 D_iw^1)+ \sum_{i} (D_1 v^iD_iw^2  -D_2 v^iD_iw^1 )
\\&=  \sum_{i} v^i D_i( D_1 w^2 -D_2 w^1)+ \sum_{i} (D_1 v^iD_iw^2  -D_2 v^iD_iw^1 )
\\&=  \sum_{i} v^i D_i( \curl w)+ \sum_{i} (D_1 v^iD_iw^2  -D_2 v^iD_iw^1 ).
\end{align*}

 By \eqref{eqn-B.06} and \eqref{eqn-B.07} we have
\begin{align}
\label{eqn-B.08}
&\hspace{-1truecm}\curl [v\cdot\nabla w +(\nabla v)^{\mathrm{t}}\cdot w] - v\cdot\nabla \curl [w]
\\&=  D_1 v^1D_1w^2   -D_2 v^1D_1w^1  +  D_1 v^2D_2w^2  -D_2 v^2D_2w^1
\nonumber\\&+  D_1 w^1 D_2 v^1 + D_1  w^2 D_2 v^2  -D_2  w^1 D_1v^1    -D_2    w^2 D_1v^2 .
\nonumber\\&=  D_1 v^1D_1w^2 -D_2 v^2D_2w^1 + D_1  w^2 D_2 v^2  -D_2  w^1 D_1v^1
\nonumber\\&= D_1 v^1 (D_1w^2  -D_2  w^1)+ D_2 v^2( D_1  w^2 -D_2w^1)
\nonumber\\&= \divv [v]\curl [w].
\nonumber\end{align}
Note that the above identity is valid for all $C^2$-class vector fields. But if additionally $\divv v=0$, it implies the identity
 \eqref{eqn-vector_equality curl}. The prof is complete.
 \end{proof}

\begin{lemma}\label{lem-main identity}
Assume that $p \in (2,\infty)$. Then for all  $\xi \in L^p_x$ and  $v \in \nH^{1,p}_x \cap \rH$, the following equality holds in  $H^{-1}_x$:
 \begin{align}
\Pi[v\cdot\nabla K \ast \xi +(Dv)^{\mathrm{t}} K \ast \xi] = K*[v\cdot\nabla \xi].
\label{eq:vel_vort}
\end{align}
\end{lemma}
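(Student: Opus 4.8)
The plan is to reduce the identity to the auxiliary curl identity of Lemma \ref{lem-auxilairy identity} together with the Biot--Savart reconstruction, and then to remove the regularity gap by approximation. First I would observe that both sides of \eqref{eq:vel_vort} are divergence-free and mean-zero vector fields: $\Pi$ maps into $\rH$ by definition, so the left-hand side lies in $\rH$ (once one checks its argument is in $L^2_x$), while $K\ast g = \nabla^\perp \Delta_0^{-1} g$ is automatically divergence-free and mean-zero for any $g$, so the right-hand side lies in $\rH$ as well. The structural fact I would exploit is the Biot--Savart reconstruction: every divergence-free mean-zero field $F$ satisfies $F = K\ast \curl F$ (this follows from Lemma \ref{lem:Green_function}, writing $F=\nabla^\perp\psi$ so that $\curl F = \Delta\psi$). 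Hence it suffices to compute the curl of the left-hand side and check it equals $v\cdot\nabla\xi$.

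For the smooth case — $v$ smooth and divergence-free, $\xi$ smooth — set $w = K\ast\xi$, which is smooth with $\curl w = \xi - \bar\xi$, where $\bar\xi$ denotes the spatial mean of $\xi$. Since $\Pi$ differs from the identity by a gradient (plus a constant), and $\curl$ annihilates gradients and constants, I have $\curl \Pi[\,\cdot\,] = \curl[\,\cdot\,]$. Applying Lemma \ref{lem-auxilairy identity} to this $v$ and $w$ gives
\[
\curl\bigl[v\cdot\nabla w + (Dv)^{\mathrm{t}} w\bigr] = v\cdot\nabla(\curl w) = v\cdot\nabla(\xi-\bar\xi) = v\cdot\nabla\xi,
\]
the last step using that $\bar\xi$ is constant. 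Combining this with the reconstruction formula applied to the left-hand side (which lies in $\rH$) yields
\[
\Pi[v\cdot\nabla w + (Dv)^{\mathrm{t}} w] = K\ast\curl\Pi[v\cdot\nabla w + (Dv)^{\mathrm{t}} w] = K\ast[v\cdot\nabla\xi],
\]
which is exactly \eqref{eq:vel_vort} in the smooth case.

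Finally I would pass to the general case $\xi\in L^p_x$, $v\in\nH^{1,p}_x\cap\rH$ with $p>2$ by approximation. Choose smooth divergence-free $v_n\to v$ in $\nH^{1,p}_x$ and smooth $\xi_n\to\xi$ in $L^p_x$; then $w_n = K\ast\xi_n\to w$ in $\nH^{1,p}_x$ by Lemma \ref{lem:Green_function}. Since $p>2$, Sobolev embedding gives $v_n\to v$ and $w_n\to w$ in $C_x$, while $Dv_n\to Dv$ and $\nabla w_n\to\nabla w$ in $L^p_x$; hence $v_n\cdot\nabla w_n\to v\cdot\nabla w$ and $(Dv_n)^{\mathrm{t}} w_n\to (Dv)^{\mathrm{t}} w$ in $L^p_x$, and applying the $L^p_x$-bounded projection $\Pi$ the whole left-hand side converges in $L^p_x\hookrightarrow H^{-1}_x$. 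On the right-hand side I would write $v_n\cdot\nabla\xi_n = \divv(v_n\xi_n)$ using $\divv v_n=0$; since $v_n\xi_n\to v\xi$ in $L^p_x$ the divergences converge in $\nH^{-1,p}_x$, and $K\ast$ being smoothing of order one maps this continuously into $L^p_x\hookrightarrow H^{-1}_x$. Passing to the limit in the smooth identity then gives \eqref{eq:vel_vort}. The main obstacle is precisely this last step: one must ensure that the products $v\cdot\nabla w$, $(Dv)^{\mathrm{t}} w$ and the distributional product $v\cdot\nabla\xi$ are all well defined and stable under approximation in the correct norms, which is exactly where the hypothesis $p>2$ (hence $\nH^{1,p}_x\hookrightarrow C_x$) is essential.
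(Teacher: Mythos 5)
Your proposal is correct and follows essentially the same route as the paper: reduce \eqref{eq:vel_vort} to the curl identity of Lemma \ref{lem-auxilairy identity} via the Biot--Savart reconstruction $\Pi F = K\ast\curl F$ from Lemma \ref{lem:Green_function}, verify the smooth case, and then extend by density using that (for $p>2$) both sides depend continuously on $(\xi,v)\in L^p_x\times(\nH^{1,p}_x\cap\rH)$ with values in $H^{-1}_x$. Your limit passage is merely a more explicit spelling-out of the paper's one-line bilinear-continuity argument.
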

\begin{proof}[Proof of Lemma \ref{lem-main identity}] Let us choose and  fix $p \in (2,\infty)$.
 We begin by observing  that by part 5 of Lemma \ref{lem:Green_function}, identity \eqref{eqn-vector_equality curl} implies that
\begin{align*}
\Pi[v\cdot\nabla w +(D v)^{\mathrm{t}} w] = K*[v\cdot\nabla \curl w],
\end{align*}
where $\Pi$ the Leray-Helmholtz  projection.
 Take $w=K \ast \xi$, for a regular scalar function $\xi:\mt^2\to\mr$. By  Lemma \ref{lem:Green_function}, $\curl [w] = \xi -\gamma$, where $\gamma = \int_{\mt^2} \xi \in \mathbb{R}$.
 If we take  a regular and divergence-free vector field  $v:\mt^2\to\mr^2$,  then by applying  the above formula  we infer that
 \eqref{eq:vel_vort}  holds.
 We claim that   equality \eqref{eq:vel_vort}  in the $H^{-1}_x$ sense  is satisfied for every  $\xi \in L^p_x$ and  $v \in \nH^{1,p}_x \cap \rH$.
 Since
   by  Lemma \ref{lem:Green_function}, both sides of \eqref{eq:vel_vort} are continuous bilinear maps from $ L^p_x \times \nH^{1,p}_x \cap \rH$ to $H^{-1}_x$, the result follows.
\end{proof}

After having proved the last Lemma we are ready to embark on our main task.

\begin{proof}[Proof of Lemma \ref{lem:velocity_eq}] Let us choose and  fix $p \in (2,\infty)$.
Assume  that a process    $\xi$ belongs to $L^p_{t,\omega}(L^p_x)$  and that
  $\xi$ is  an $\mc{M}_{x,M}$-valued distributional solution to the stochastic vorticity equation \eqref{eqn: Euler stoch vorticity form-Intro} (according to Definition \ref{def:sol}).
  Thus equality  \eqref{eq:stochEulervort} is satisfied in $H_x^{-4}$.
   Define a process $u$ by
  \[ u(t,\cdot,\omega) =K \ast \xi (t,\cdot,\omega), \;\; (t,\omega) \in [0,\infty)\times \Omega.\]

Since $\xi$ belongs to  $L^p_{t,\omega}(L^p_x)$, by by Lemma \ref{lem:Green_function} we infer that  $u$ belongs  $L^p_{t,\omega}(\nH^{1,p}_x)$. We  also observe by Lemma \ref{lem:Poupaud_trick},  that since  $\xi \in L^p_{t,\omega}(L^p_x)$, the nonlinear term can be written as $u\cdot\nabla\xi$, and the equation \eqref{eq:stochEulervort} holds actually in $H^{-2}_x$.  Indeed, $\xi$ and all the integrands of \eqref{eq:stochEulervort} takes values in $H^{-2}_x$ and are progressively measurable as $H^{-2}_x$-valued processes (and their deterministic and stochastic $H^{-2}_x$-valued integrals coincide with the $H^{-4}_x$-valued integrals). Now we apply to \eqref{eq:stochEulervort} the convolution operator
\begin{align*}
K\ast: H^{-2}_x \ni \xi \mapsto u=: K\ast \xi \in  H^{-1}_x,
\end{align*}
which by Lemma \ref{lem:Green_function} is linear and bounded. By the first part of this proof, we get, for a.e.~$\omega$, as equality in $H^{-1}_x$: for every $t$,
\begin{align*}
u_t &= u_0 - \int^t_0 \Pi[u_r\cdot\nabla u_r +(Du_r)^{\mathrm{t}} u_r] \rmd r\\
&- \sum_k \int^t_0 \Pi[ \sigma_k\cdot\nabla u_r +(D\sigma_k)^{\mathrm{t}} u_r ] \rmd W^k_r\\
&+\frac{c}2 \int^t_0 \Delta u_r \rmd r
\end{align*}
Now we note that $(Du_r)^{\mathrm{t}} u_r = \nabla [|u_r|^2]/2$ and hence $\pi\bigl( (Du_r)^{\mathrm{t}} u_r\bigr)=0$. Thus we deduce  \eqref{eqn-stochEulervel} and so
the proof of Lemma \ref{lem:velocity_eq} is complete.
\end{proof}

\begin{proof}[Proof of  Lemma \ref{lem:u_norm}]
Leu us fix $p\in (2,\infty)$ and the processes $\xi$ and $u$ as in the proof  of Lemma \ref{lem:velocity_eq}.

For any $\delta \in (0,1]$, we consider  an operator
\begin{align*}
R^\delta:  H^{-1}_x u \mapsto \rho_\delta \ast u \in  L^2_x
\end{align*}
where $(\rho_\delta)_\delta$ is a standard family of mollifiers on $\mt^2$.
Let us observe that the operators  $R^\delta$  are uniformly  bounded maps  on the spaces $L^p_x$ and $\nH^{1,p}_x$. In particular, the processes   $R^\delta u$  belong uniformly to the same spaces as the process $u$. Hence, by the Sobolev embedding, the $L^p_{t,\omega}(L^\infty_x)$ norm of  $R^\delta u$ is uniformly bounded.


Note that $R^\delta f \to f$ in $L^2_x$, resp.~$H^{-1}_x$ for every $f \in L^2_x$, resp.~$f \in H^{-1}_x$. By applying the  map  $R^\delta$ to equality \eqref{eqn-stochEulervel} we get
\begin{align*}
&R^\delta u_t = R^\delta u_0 -\int^t_0 R^\delta \Pi[u_r\cdot\nabla u_r] dr\\
&-\int^t_0 R^\delta \Pi[ \sigma_k\cdot\nabla u_r +(D\sigma_k)^{\mathrm{t}} u_r ] \rmd W^k_r\\
&+\frac{c}2 \int^t_0 R^\delta\Delta u_r \rmd r, \;\;\mbox{ in } L^2_x
\end{align*}
 for every $t \geq 0$. Next we can apply the It\^o formula  \cite[Theorem 4.32]{DaPZab2014} to the square of the $L^2_x$ norm, which is obviously of $C^2$ class on the space $L^2_x$,  with uniformly continuous derivatives on bounded subsets of $L^2_x$. We infer that, for every $t$,
\begin{align*}
&\|R^\delta u_t\|_{L^2_x}^2 = \|R^\delta u_0\|_{L^2_x}^2 -2\int^t_0 \lan R^\delta u_r, R^\delta \Pi[u_r\cdot\nabla u_r] \ran dr\\
&-2\int^t_0 \lan R^\delta u_r, R^\delta \Pi[ \sigma_k\cdot\nabla u_r +(D\sigma_k)^{\mathrm{t}} u_r ] \ran \rmd W^k_r\\
&+\int^t_0 \lan R^\delta, cR^\delta\Delta u_r \ran \rmd r + \int^t_0 \sum_k \|R^\delta \Pi[ \sigma_k\cdot\nabla u_r +(D\sigma_k)^{\mathrm{t}} u_r\|_{L^2_x}^2 \rmd r.
\end{align*}
Since
\begin{align*}
&\sum_k \E \int^T_0 |\lan R^\delta u_r, R^\delta \Pi[ \sigma_k\cdot\nabla u_r +(D\sigma_k)^{\mathrm{t}} u_r ] \ran|^2_{L^2_x} \rmd r\\
&\le C \|u\|_{L^2_{t,\omega}(L^\infty_x)}^2 \left(\sum_k \|\sigma_k\|_{C^1_x}\right) \|u\|_{L^2_{t,\omega}(\nH^{1,2}_x)}^2.
\end{align*}
we infer that  the stochastic integral is an $L_x^2$-martingale with zero mean. Similarly the integrands in the deterministic integrals have finite $L^1_{t,\omega}$ norm and we can take expectation: we get
\begin{align*}
&\E\|R^\delta u_t\|_{L^2_x}^2 = \E\|R^\delta u_0\|_{L^2_x}^2 -2\E \int^t_0 \lan R^\delta u_r, R^\delta \Pi[u_r\cdot\nabla u_r] \ran dr\\
&-\E\int^t_0 c\E \|R^\delta \nabla u_r\|_{L^2_x}^2 \rmd r + \E \int^t_0 \sum_k \|R^\delta \Pi[ \sigma_k\cdot\nabla u_r +(D\sigma_k)^{\mathrm{t}} u_r\|_{L^2_x}^2 \rmd r,
\end{align*}
where we have used integration by parts and that $R^\delta$ commutes with $\Delta$ and $\nabla$. Finally, we note that $R^\delta f \to f$ in $L^2_x$ for every $f$ in $L^2_x$. We exploit this fact for $f=u_r$, $f=\Pi[u_r\cdot\nabla u_r]$, $f=\sigma_k\cdot\nabla u_r +(D\sigma_k)^{\mathrm{t}} u_r$ and $f=\nabla u_r$, and use the dominated convergence theorem in $r$ and $\omega$ and $k$, to pass $\delta\to 0$ and obtain \eqref{eq:u_norm}. The proof is complete.
\end{proof}

\begin{lemma}\label{lem:C_Lebesgue_meas}
Let $X$ be a closed convex subset of a topological vector space, endowed with its Borel $\sigma$-algebra. Assume that $X$ is also a Polish space. Let $\zeta:[0,T]\times \Omega\to X$ be a $\mc{B}([0,T])\times \mc{A}$ Borel measurable map.
\begin{itemize}
\item The set $C_t(X)$ is a Polish space and, if, for every $\omega$, $t\mapsto \zeta_t$ is in $C_t(X)$, then $\omega\mapsto \zeta(\cdot,\omega)$ is $\mc{A}$ Borel measurable as $C_t(X)$-valued map.
\item If $X$ is a separable reflexive Banach space and, for every $\omega$, $t\mapsto \zeta_t$ is in $L^2_t(X)$ (more precisely, has finite $L^2_t(X)$ norm), then $\omega\mapsto \zeta(\cdot,\omega)$ (more precisely, its equivalence class) is $\mc{A}$ Borel measurable as $L^2_t(X)$-valued map.
\end{itemize}
\end{lemma}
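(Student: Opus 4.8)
The statement has two parts, and I would prove each separately; both rest on the same principle, namely that the Borel $\sigma$-algebra of the ambient separable function space is generated by countably many coordinate-type maps, so that measurability may be verified one coordinate at a time.

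I would first dispose of the topological preliminary. Fix a complete metric $d$ on the Polish space $X$ compatible with its topology and equip $C_t(X)=C([0,T];X)$ with the uniform metric $\rho(f,g)=\sup_{t\in[0,T]}d(f(t),g(t))$. Then $C_t(X)$ is Polish: completeness is immediate, since a uniform limit of continuous $X$-valued maps is continuous and $X$ is complete; separability follows from the compactness of $[0,T]$ and the separability of $X$ by the standard fact that $C(K;X)$ is Polish whenever $K$ is compact metric and $X$ is Polish, and alternatively the convexity of $X$ furnishes an explicit countable dense set, namely the piecewise-affine interpolations with rational nodes taking values in a fixed countable dense subset $D\subseteq X$ (such interpolations remain in $X$ precisely because $X$ is convex). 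This settles the first assertion of the first bullet.

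For the measurability in the first bullet, the key observation is that on the separable metric space $C_t(X)$ one has
\begin{align*}
\mc{B}(C_t(X))=\sigma\bigl(\pi_q:\ q\in\mathbb{Q}\cap[0,T]\bigr),\qquad \pi_s:f\mapsto f(s).
\end{align*}
Indeed, by continuity $\rho(f,g)=\sup_{q\in\mathbb{Q}\cap[0,T]}d(f(q),g(q))$, so every closed ball belongs to the right-hand $\sigma$-algebra; since $C_t(X)$ is separable, every open set is a countable union of balls, which gives the inclusion $\subseteq$, while $\supseteq$ holds because each $\pi_q$ is continuous. Consequently, to show that $\Phi:\omega\mapsto\zeta(\cdot,\omega)$ is $\mc{A}$-Borel it suffices to check that $\pi_q\circ\Phi=\zeta(q,\cdot)$ is $\mc{A}$-measurable for each rational $q$; this is precisely the $\omega$-measurability of the jointly measurable map $\zeta$ at a fixed time, which holds by hypothesis.

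For the second bullet, $X$ separable and reflexive makes $L^2_t(X)$ a separable Banach space whose dual is $L^2_t(X^\ast)$ (reflexivity of $X$ gives the Radon--Nikodym property for $X^\ast$, so $L^2_t(X)^\ast=L^2_t(X^\ast)$). I would invoke the Pettis measurability theorem: as the target is separable, strong (Borel) measurability of $\Phi:\omega\mapsto[\zeta(\cdot,\omega)]$ is equivalent to weak measurability, i.e.\ to $\mc{A}$-measurability of $\omega\mapsto\lan\Phi(\omega),\psi\ran$ for every $\psi\in L^2_t(X^\ast)$. For fixed $\psi$ this pairing equals
\begin{align*}
\lan\Phi(\omega),\psi\ran=\int^T_0\lan\zeta(t,\omega),\psi(t)\ran_{X,X^\ast}\rmd t.
\end{align*}
The integrand is $\mc{B}([0,T])\otimes\mc{A}$-measurable, being the composition of the jointly measurable map $(t,\omega)\mapsto(\zeta(t,\omega),\psi(t))$ with the continuous duality pairing, and for each $\omega$ it is integrable in $t$ by the H\"older inequality, since $\zeta(\cdot,\omega)\in L^2_t(X)$ and $\psi\in L^2_t(X^\ast)$. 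The standard result on measurability of a parameter-dependent integral (Tonelli applied to the positive and negative parts) then yields $\mc{A}$-measurability of $\omega\mapsto\lan\Phi(\omega),\psi\ran$. Hence $\Phi$ is weakly, and therefore strongly, measurable, which is the second assertion. The genuine content lies in this second bullet: the points to handle with care are the identification of the dual of $L^2_t(X)$, the application of the Pettis theorem (whose separability hypothesis is already guaranteed), and the measurability of the parametrized integral; the first bullet is essentially bookkeeping once $C_t(X)$ is known to be Polish.
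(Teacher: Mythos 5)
Your proof is correct and follows essentially the same route as the paper's: for the first bullet you generate the Borel $\sigma$-algebra of $C_t(X)$ by the evaluation maps at rational times and reduce to the measurability of the time-sections of $\zeta$, and for the second bullet you reduce to weak measurability via the Pettis theorem, the duality $L^2_t(X)^\ast=L^2_t(X^\ast)$, and Fubini. You merely supply more of the routine details (separability of $C_t(X)$, joint measurability of the integrand) than the paper's terser write-up.
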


\begin{proof}
For the first point, the fact that $C_t(X)$ is a Polish space is well known. Moreover the Borel $\sigma$-algebra $\mc{B}(C_t(X))$ on $C_t(X)$ is generated by the evaluation maps $\pi_t(\gamma)=\gamma_t$. Indeed,  $\mc{B}(C_t(X))$ is generated by the maps
\begin{align*}
\gamma \mapsto d(\gamma(t),g(t)),\quad t\in [0,T]\cap \mathbb{Q},\quad g\in C_t(X),
\end{align*}
with $d$ distance on $X$, and these maps are measurable in the $\sigma$-algebra generated by the evaluation maps (because they are composition of the evaluation maps and a Borel function on $X$). Now, for every $t$, the map $\pi_t(\zeta)=\zeta_t$ is $\mc{A}$ Borel, by the Fubini theorem, hence, if $\zeta$ is $C_t(X)$-valued, then it is $\mc{A}$ Borel measurable as $C_t(X)$-valued map.

For the second point, we note that, by Lemma \ref{lem:equiv_topol}, it is enough to show that $\zeta$ is weakly progressively measurable. Since the dual of $L^2_t(X)$ is $L^2_t(X^\ast)$ (see \cite[Chapter IV Section 1]{DieUhl1977}), it is enough to show that, for every $\varphi \in L^2_t(X^\ast)$,
\begin{align*}
\omega\mapsto \int^T_0 \lan\zeta(t,\omega), \varphi(t)\ran_{X,X^\ast} \rmd t
\end{align*}
is measurable. But this follows from Fubini theorem. The proof is complete.
\end{proof}

\begin{proof}[Proof of Lemma \ref{lem:BM_enlarged}]
Let us denote by $\td{\mathbb{F}}^{00}=\big(\td{\mc{F}}^{00}_t\bigr)_t$, where $\td{\mc{F}}^{00}_t = \sigma\{\td{\xi}_s,\td{W}_s : 0\le s\le t\}$,   the filtration generated by $\td{\xi}$ and $\td{W}$.
Clearly the processes $\td{W}$ and $\td{\xi}$ are adapted to $\td{\mc{F}}^{00}$. We claim that $\td{W}$ is a cylindrical $\td{\mc{F}}^{00}$-Wiener process. Indeed, $\td{W}$ is a cylindrical Brownian motion with respect to its natural filtration, as a.s. limit of cylindrical Brownian motions. Moreover, for every $0\le s_1\le \ldots \le s_h\le s < t$, $\td{W}^{(j)}_t-\td{W}^{(j)}_s$ is independent of $(\td{\xi}^j_{s_1},\td{W}^{(j)}_{s_1},\ldots \td{\xi}^j_{s_h},\td{W}^{(j)}_{s_h})$, therefore $\td{W}_t-\td{W}_s$ is independent of $(\td{\xi}_{s_1},\td{W}_{s_1},\ldots \td{\xi}_{s_h},\td{W}_{s_h})$. This proves our claim.

Recall that $\td{\mathbb{F}}^{0}=\big(\td{\mc{F}}_t^{0}\bigr)_t$ is the filtration generated by $\td{\mathbb{F}}^{00}$ and the $\td{\mathbb{P}}$-null sets on $(\td{\Omega},\td{\mc{A}},\td{\mathbb{P}})$ and that $\td{\mc{F}}_t = \cap_{s>t}\td{\mc{F}}^{0}_s$. We argue as in the proof of \cite[Proposition 2.5, Point 1]{Bas2011} (note that the proof is valid for any filtration making $W^k$ Brownian motions) and we get that the filtration $(\td{\mc{F}}_t)_t$ is complete and right-continuous and $\td{W}$ is still a cylindrical Brownian motion with respect to it. Finally $\td{\xi}$ is an $(\mc{M}_{x,M},w^\ast )$-valued $(\td{\mc{F}}_t)_t$-adapted and continuous process, hence also progressively measurable.

In a similar (and easier) way, one gets the result for $(\td{\mc{F}}^j_t)_t$, $\td{W}^{(j)}$ and $\td{\xi}^j$, for each $j$.
\end{proof}

\begin{proof}[Proof of Lemma \ref{lem:eq_enlarged}]
Let us fix $j$. We have to verify equation \eqref{eq:stochEulervort} for $(\xi^j,W^{(j)})$ for every $\varphi \in C^\infty_{t,x}$. The idea is taken by \cite[Section 5]{BrzGolJeg2013}: it is enough to verify that, for every $\varphi \in C^\infty_x$, for every $t$, the random variables
\begin{align}
Z_t&:= \lan \xi^{\eps_j}_t,\varphi_t \ran - \lan \xi^{\eps_j}_0,\varphi\ran -\int^t_0 \lan N(\xi^{\eps_j}_r),\varphi \ran \rmd r\nonumber\\
& -\sum_k \int^t_0 \lan \xi^{\eps_j}_r,\sigma_k\cdot\nabla \varphi \ran  \rmd W^k_r -\frac12 \int^t_0 \lan \xi^{\eps_j}_r, c\Delta \varphi \ran \rmd r \label{eq:vorticity_td_approx}
\end{align}
and $\td{Z}_t$, obtained as in \eqref{eq:vorticity_td_approx} replacing $(\xi^{\eps_j},W)$ with $(\td{\xi}^j,\td{W}^{(j)})$, have the same law. We fix $t$ and $\varphi \in C^\infty_x$. By Lemma \ref{lem:Poupaud_trick} and Lemma \ref{lem:H_Borel}, all the terms in \eqref{eq:vorticity_td_approx} but the nonlinear term and the stochastic integral are Borel functions of $\xi^{\eps_j}$ with respect to the $C_t(\mc{M}_{x,M},w^\ast )$ topology. Concerning the stochastic integral we use an approximation argument. For every positive integers $K$ and $N$, calling $t^N_i =2^{-N}i$ for $i$ integer, the map
\begin{align*}
C_t(\mc{M}_{x,M},w^\ast )\times C_t^{\mathbb{N}}\ni (\xi,W)\mapsto \sum_{k=1}^K \sum_{i,t^N_{i+1}\le t} \lan \xi_{t^N_i}, \sigma_k\cdot\nabla \varphi{t^N_i} \ran (W_{t^N_{i+1}}-W_{t^N_i})
\end{align*}
is a continuous, in particular Borel function. By the continuity of $t\mapsto \lan \xi_t, \sigma_k\cdot\nabla \varphi_t \ran$ for every $k$, for a.e.~$\omega$, and by the square-summability of $\|\sigma_k\|_{C_x}$, we get via the dominated convergence theorem that, as $(N,K)$ tends to $\infty$,
\begin{align*}
\sum_k \E \int^T_0 |\lan \xi_t, \sigma_k\cdot\nabla \varphi_t \ran - 1_{k\le K} \sum_i \lan \xi^{\eps_j}_{t^N_i}, \sigma_k\cdot\nabla \varphi{t^N_i} \ran 1_{[t^N_{i+1},t^N_i)}(t)|^2 \rmd r \to 0,
\end{align*}
so by the It\^o isometry we obtain that, as $(N,K)\to\infty$,
\begin{align*}
\sum_{k=1}^K \sum_{i,t^N_{i+1}\le t} \lan \xi^{\eps_j}_{t^N_i}, \sigma_k\cdot\nabla \varphi{t^N_i} \ran (W_{t^N_{i+1}}-W_{t^N_i}) \to \sum_k \int^t_0 \lan \xi^{\eps_j}_r,\sigma_k\cdot\nabla \varphi_r \ran  \rmd W^k_r \quad \text{in }L^2_\omega.
\end{align*}
Similarly for $(\td{\xi}^j,\td{W}^{(j)})$ (with convergence in $L^2_{\tilde{\omega}}$). We conclude that
\begin{align*}
&Z_t = F_t(\xi^{\eps_j}) +L^2_\omega-\lim_{N,K} G_{N,K,t}(\xi^{\eps_j},W),\\
&\tilde{Z}_t = F_t(\tilde{\xi}^j) +L^2_{\tilde{\omega}}-\lim_{N,K} G_{N,K,t}(\tilde{\xi}^j,\tilde{W}^{(j)})
\end{align*}
for some Borel maps $F_t$ and $G_{N,K,t}$. Since $(\xi^{\eps_j},W)$ and $(\td{\xi}^j,\td{W}^{(j)})$ have the same law, also $Z_t$ and $\td{Z}_t$ have the same law. Since, $\mathbb{P}$-a.s.,~$Z_t=0$ for every $t$, also, $\mathbb{P}$-a.s.,~$\td{Z}_t=0$ for every $t$, and so, by Lemma \ref{lem:stochEulervort_H}, $(\td{\Omega},\td{\mc{A}},(\td{\mc{F}}^j_t)_t,\td{\mathbb{P}},\td{W}^{(j)},\td{\xi}^j)$ solves \eqref{eqn: Euler stoch vorticity form-Intro}.

Concerning Lemmas \ref{lem-Hm1_bound} and \ref{lem:Hm4_bound}, for any integer $h$, as a consequence of Remark \ref{rmk:Borel_norm}, the maps
\begin{align*}
&C_t(\mc{M}_{x,M},w^\ast )\ni \xi \mapsto (\|\xi_t\|_{H^h_x})_t \in C_t,\\
&C_t(\mc{M}_{x,M},w^\ast )\ni \xi \mapsto \|\xi\|_{C^\alpha_t(H^h_x)} \in \mr
\end{align*}
are Borel. Hence $\|\xi^{\eps_j}_t\|_{H^h_x}$ and $\|\td{\xi}^j_t\|_{H^h_x}$ have the same laws (as $C_t$-valued random variables) and so Lemma \ref{lem-Hm1_bound} holds for $\td{\xi}^j$. Similarly $\|\xi^{\eps_j}_t\|_{H^h_x}$ and $\|\td{\xi}^j_t\|_{H^h_x}$ have the same laws and so Lemma \ref{lem:Hm4_bound} holds for $\td{\xi}^j$.

Finally, concerning non-negativity, we note that the set $\{\xi_t\ge 0,\, \forall t\}$ is Borel in $C_t(\mc{M}_x,w^\ast )$, because it can be written as $\lan \xi_t,\varphi \ran\ge 0$ for all rational $t$ and all $\varphi$ in a countable dense set in $C_x$. Since $\xi^{\eps_j}$ is concentrated on $\{\xi_t\ge 0,\, \forall t\}$, also $\td{\xi}^j$ is concentrated on this set. The proof is complete.
\end{proof}

\section{The torus and the corresponding  Green function}
\label{app-C}

We consider the torus $\mt^2$ as the two-dimensional manifold obtained from $[-1,1]^2$ identifying the opposite sides; we call $\pi: \mathbb{R}^2 \to \mathbb{T}^2$ the quotient map. A continuous ($C_x$) function is understood here as a continuous periodic function on $\mr^2$, with period $2$ on both $x_1$ and $x_2$ directions, and can be identified with a continuous function on the torus $\mt^2$. For $s$ positive integer, a $C^s_x$ function on $\mt^2$ is a $C^s$ periodic function on $\mr^2$ (with period $2$). Similarly, for $s$ positive integer and $1\le p \le\infty$, a $\nH^{s,p}_x$ function on $\mt^2$ is a $\nH^{s,p}_{loc}$ periodic function on $\mr^2$ (with period $2$). One can also define a Riemannian structure on the torus via the quotient map $\pi$ so that $\pi$ is a local isometry; the local isometry implies that the gradient, the covariant derivatives etc transform naturally, moreover the $C^s$ and $\nH^{s,p}$ spaces defined via the Riemannian structure coincide with the corresponding spaces of periodic functions as defined above.

The space of distribution $\mathcal{D}^\prime_x$ on $\mt^2$ is understood as the dual space of $C^\infty$ periodic functions on $\mr^2$. The spaces of functions can be identified with subspaces of distribution via the $L^2$ scalar product $\lan f,g\ran =\int_{[-1,1[^2} f(x) g(x) \rmd x$. The space of measures $\mc{M}_x$ is the space of distributions on $\mt^2$ which are continuous (precisely, can be extended continuously) on $C_x$; the space $\mc{M}_x$ can be identified with the space of finite $\mathbb{R}$-valued  \Radon measures on $\mt^2$ and with the quotient space of finite $\mathbb{R}$-valued  \Radon measures on $[-1,1]^2$ under the map $\pi$, via the $L^2_x$ scalar product:
\begin{align*}
\lan f, \mu\ran = \int_{[-1,1[^2} f(x) \mu(\rmd x),\quad \forall f\in C_x.
\end{align*}
For $s>0$  and $1<p<\infty$, denoting by  $p^\ast$ the conjugate exponent of $p$, the space $\nH^{-s,p^\ast}$ is the space of distributions on $\mt^2$ which can be continuously extended to $\nH^{s,p}$.

The convolution on the torus is understood as
\begin{align*}
f\ast g(x) = \int_{[-1,1[^2}f(y)g(x-y) \rmd y
\end{align*}
for $f$, $g$ periodic functions on $\mr^2$.

We recall here some standard facts on the Green function $G$ of the Laplacian on the zero-mean functions, that is
\begin{align*}
\Delta G(\cdot, y) = \delta_y,\quad \forall y\in \mathbb{T}^2.
\end{align*}

\begin{lemma}\label{lem:Green_function}
The following facts hold true.
\begin{enumerate}
\item The Green function $G$ is translation invariant, that is $$G(x,y)=G(x-y), \mbox{ for all $x,y$},  $$ even, regular outside $0$, with
$$-C^{-1}\log|x|\le G(x)\le -C\log|x| \mbox{ in a neighborhood of $0$}.$$
\item The kernel $K=\nabla^\perp G$ is divergence-free (in the distributional sense), odd, regular outside $0$, with $C^{-1}|x|^{-1}\le |K(x)|\le C|x|^{-1}$ in a neighborhood of $0$.
\item Let $\xi$ be a distribution on $\mt^2$ with zero mean, define $u=K \ast \xi$. Then $\diverg u =0$ and $\xi = \curl  u$.
\item Let $u$ be a vector-valued distribution on $\mt^2$ with zero mean and with $\diverg u =0$, define $\xi= \curl  u$. Then $u=K \ast \xi$.
\item Let $u$ be a vector-valued distribution on $\mt^2$, define $\xi= \curl  u$. Then $\Pi u = K \ast \xi$, where $\Pi$ is the Leray projector on zero-mean divergence-free distributions.
\item Let $\xi$ be a distribution on $\mt^2$ with zero mean, define $u=K \ast \xi$. For any $1<p<\infty$, for any  $n \in \mathbb{Z}$, $\xi \in \nH^{n,p}_x$ if and only if $u \in \nH^{n+1,p}_x$ and  there exist a constant $C=C(n)$ independent of $\xi$ such that
\begin{align*}
C^{-1}\|\xi\|_{\nH^{n,p}_x}\le \|u\|_{\nH^{n+1,p}_x}\le C\|\xi\|_{\nH^{n,p}_x}.
\end{align*}
\end{enumerate}
\end{lemma}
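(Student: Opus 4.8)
The plan is to regard these as standard facts about the periodic Biot--Savart kernel and to prove them with Fourier analysis on $\mt^2$, comparison with the free-space fundamental solution, and Calderón--Zygmund theory. Working in the Fourier basis, $-\Delta$ acts as multiplication by a positive multiple of $|k|^2$ on the nonzero modes, so $G$ is determined by $\hat G(k)=-c_0|k|^{-2}$ for $k\neq0$ and $\hat G(0)=0$; equivalently $\Delta G=\delta_0-|\mt^2|^{-1}$ in the zero-mean class.

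First, for (1): translation invariance is inherited from the invariance of the defining equation and the uniqueness of its zero-mean solution, so $G(x,y)=G(x-y)$, and the same argument applied to $x\mapsto-x$ gives evenness. Away from $0$ the equation reads $\Delta G=-|\mt^2|^{-1}$, so $G$ is smooth there by interior elliptic regularity (Weyl's lemma). For the logarithmic behavior I would compare $G$ with the $\mr^2$ fundamental solution $\Phi(x)=\pm\tfrac{1}{2\pi}\log|x|$: the difference solves a smooth elliptic equation near $0$ and is therefore bounded, giving $G(x)=\pm\tfrac1{2\pi}\log|x|+O(1)$ and the stated two-sided bound, the sign being the one fixed by the convention for $G$. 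Differentiating this comparison yields (2): $\divv K=\divv\nabla^\perp G=0$ by equality of mixed partials, $K=\nabla^\perp G$ is odd because $G$ is even, it is smooth off $0$ by (1), and $K(x)=\mp\tfrac1{2\pi}\,x^\perp/|x|^2+O(1)$ gives $|K(x)|\sim\tfrac1{2\pi|x|}$.

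Parts (3)--(5) are the algebraic identities. For (3), $\divv(K\ast\xi)=(\divv K)\ast\xi=0$, while $\curl K=\curl\nabla^\perp G=\Delta G=\delta_0-|\mt^2|^{-1}$, so $\curl(K\ast\xi)=\Delta G\ast\xi=\xi$ using that $\xi$ has zero mean. For (4), given $u$ zero-mean and divergence-free with $\xi=\curl u$, the field $v:=u-K\ast\xi$ is zero-mean, divergence-free and, by (3), curl-free; in Fourier, for $k\neq0$ the conditions $k\cdot\hat v(k)=0$ and $k^\perp\cdot\hat v(k)=0$ force $\hat v(k)=0$, and $\hat v(0)=0$ from zero mean, so $v\equiv0$ and $u=K\ast\xi$. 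For (5), decompose $u=\Pi u+\nabla q+\bar u$; since $\curl$ annihilates $\nabla q$ and the constant $\bar u$, we have $\xi=\curl u=\curl(\Pi u)$, and $\Pi u$ is zero-mean divergence-free, so (4) gives $\Pi u=K\ast\xi$.

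Finally (6) carries the real analytic content. Writing $u=K\ast\xi=\nabla^\perp(G\ast\xi)=\nabla^\perp\psi$ with $\Delta\psi=\xi$, the solution operator $\xi\mapsto u$ is Fourier multiplication by the symbol $m(k)=\mathrm{i}\,c_1\,k^\perp/|k|^2$, which is homogeneous of degree $-1$ and smooth away from $0$, while the reverse map $u\mapsto\curl u=\xi$ has a degree $+1$ symbol and composes with it to the identity. The two-sided estimate $C^{-1}\|\xi\|_{\nH^{n,p}_x}\le\|u\|_{\nH^{n+1,p}_x}\le C\|\xi\|_{\nH^{n,p}_x}$ then follows from the periodic Mikhlin--Hörmander multiplier theorem, which is exactly where the Calderón--Zygmund machinery enters and is the main obstacle: for $p\neq2$ one cannot use Plancherel and must invoke the $L^p$ boundedness of the associated singular integral on the torus. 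A secondary technical point is to justify the manipulations with the singular kernel $K$, which I would handle by carrying them out through the smooth stream function $\psi$ or by density of smooth zero-mean functions in the relevant spaces.
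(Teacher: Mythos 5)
Your proposal is correct and follows the same overall route as the paper: both arguments live entirely in Fourier series on $\mt^2$, both get uniqueness in parts (3)--(5) from the same Hodge fact (a divergence-free, curl-free, zero-mean distribution vanishes because $\hat v(k)$ is orthogonal to both $k$ and $k^\perp$), and both reduce part (6) to the $L^p$-boundedness of a zero-order periodic Fourier multiplier. The local differences are worth recording. For (1)--(2) you supply the comparison with the free-space fundamental solution $\tfrac{1}{2\pi}\log|x|$ (the difference being harmonic up to a constant near $0$, hence smooth), whereas the paper simply cites \cite[Proposition B.1]{BrzFlaMau2016} for the asymptotics; your argument is the content of that citation, and note that to differentiate the expansion and get the $|x|^{-1}$ bound on $K$ you need the remainder to be $C^1$ near $0$, not merely bounded, which your elliptic-regularity remark does deliver. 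For (3) you compute $\curl K=\Delta G=\delta_0-|\mt^2|^{-2}$ directly, while the paper passes through the stream function $\psi=-G\ast\xi$ and uses $\curl\nabla^\perp\psi=-\Delta\psi$; these are the same computation in different order. For (6) you invoke the periodic Mikhlin--H\"ormander theorem on the symbol $ik^\perp/|k|^2$ composed with the Bessel weights, while the paper instead cites Triebel's Fourier-series characterization of $\nH^{s,p}_x$ (with the norm $\|\sum_k a_k(1+|k|^2)^{s/2}e^{ik\cdot x}\|_{L^p_x}$) and then reads off that $\Delta^{-1}$ lifts by two derivatives and $\nabla^\perp$ loses one. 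The Calder\'on--Zygmund content is identical; the paper just buries it in the cited norm equivalence, whereas you expose it as a multiplier estimate. Your flagged technical point about justifying manipulations with the singular kernel is handled correctly by working through $\psi$ or by density, exactly as the paper implicitly does.
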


For the proof, we recall the following facts:
\begin{itemize}
\item Any distribution $f$ on $\mt^2$ can be written in Fourier series as $f= \sum_k a_k e^{ik\cdot x}$ (the convergence being when tested against a smooth periodic function), see \cite[Section 9]{Tri1983} and \cite[Section 4.11.1]{Tri1978}.
\item For any integer $s$ and any $1<p<\infty$, the Sobolev space $\nH^{s,p}$ can also be written in terms of Fourier series, that is
\begin{align}
\nH^{s,p}_x = \{f \in \mathcal{D}^\prime_x : \tilde{f}^s := \sum_k a_k (1+|k|^2)^{s/2} e^{ik\cdot x} \in L^p_x \},\label{eq:Sob_Fourier}
\end{align}
with $\|\tilde{f}^s\|_{L^p_x}$ as equivalent norm, see \cite[Section 4.11.1]{Tri1978}. This fact is well-known for $s\ge 0$. We give a sketch of the proof for $s<0$ for completeness. We have to show that the above right-hand side is the dual space of $\nH^{-s,p}_x$. Indeed,  for every distributions $f$ continuous on $\nH^{-s,p^\ast}_x$, it holds
\begin{align*}
|\lan \tilde{\varphi}^{-s}, \tilde{f}^s \ran| = |\lan \varphi , f \ran| \le C\|\varphi\|_{\nH^{-s,p^\ast}_x} \le C'\|\tilde{\varphi}^{-s}\|_{L^{p^\ast}_x}, \quad  \forall \varphi \in \nH^{-s,p^\ast}_x,
\end{align*}
hence $\tilde{f}^s$ belongs to $L^p_x$, so $f$ belongs to the right-hand side of \eqref{eq:Sob_Fourier}.
\item Regularity theory: The Laplacian operator $\Delta$, indended in the sense of distribution, acts multiplying each Fourier coefficient $a_k$ by $|k|^2$. In particular, it is invertible on the subspace of zero-mean distributions and its inverse acts multiplying each Fourier coefficient $a_k$ by $|k|^{-2}1_{k\neq 0}$. It follows that the inverse $\Delta^{-1}$ of the Laplacian (on zero-mean distributions) maps $\nH^{s,p}$ into $\nH^{s+2,p}$, for any integer $s$ and any $1<p<\infty$.
\item Hodge decomposition: if $f$ is a $\mr^2$-valued distributions with $\divv f=0$ and $\curl  f=0$, then $f$ is a constant. Indeed, if $a_k$ are the Fourier coefficients of $f$, we have $a_k \cdot k=0$ and $a_k\cdot k^\perp =0$ for every $k$, therefore $a_k=0$ for every $k\neq 0$.
\end{itemize}

\begin{proof}[Proof of Lemma \ref{lem:Green_function}]
\begin{enumerate}
\item The fact that $G$ is translation-invariant is due to the translation invariant property of the torus: if $\varphi$ is periodic and zero-mean and solves $\Delta \varphi =\delta_0$ in the distributional sense, then $\varphi_y(x):=\varphi(x-y)$ is still periodic and zero-mean and solves $\Delta \varphi= \delta_y$. For the even and regularity property and the bounds, see e.g. \cite[Proposition B.1]{BrzFlaMau2016} and references therein.
\item The fact that $K$ is divergence-free, odd and regular outside $0$ is a consequence of the definition of $K$ and the properties of $G$. For the bounds, see again \cite[Proposition B.1]{BrzFlaMau2016}.
\item The fact that $u$ is divergence-free follows from the same property of $K$. Call $\psi=(-\Delta)^{-1}\xi = -G*\xi$. Then $u=-\nabla^\perp \psi$ and so
\begin{align*}
\curl  u = \partial_{x_1}u^2 - \partial_{x_2}u^1 = -\Delta \psi = \xi,
\end{align*}
where all the computations are intended using test functions.
\item Call $\tilde{u}=K \ast \xi$. We deduce from the previous points that $\curl  (u-\tilde{u}) = 0$ and that $\diverg (u-\tilde{u}) = 0$. From this we conclude that $u-\tilde{u}$ is a constant, therefore is $=0$ as both functions have zero mean.
\item This follows from the previous point, applied to $\Pi u$ in place of $u$.
\item If $u$ is in $\nH^{s+1,p}$ then $\xi=\curl  u$ is in $\nH^{s,p}$. Conversely, if $\xi$ is in $\nH^{s,p}$, then $\psi = (-\Delta)^{-1} \xi$ is in $\nH^{s+2,p}$ and so $u = -\nabla^{-1}\psi$ is in $\nH^{s+1,p}$.
\end{enumerate}
\end{proof}

\section{Measurability}
\label{app-D}

We include here various standard concepts and results about measurability.

We recall the definition of strong, weak, weak-$^\ast$ and Borel measurability for a Banach-space valued map. We are given a $\sigma$-finite measure space $(E,\mathcal{E},\mu)$, a Banach space $V$ and a function $f:E\to V$:
\begin{itemize}
\item we say that $f$ is strongly measurable if it is the pointwise
(everywhere) limit of a sequence of $V$-valued simple measurable
functions (i.e. of the form $\sum_{i=1}^{N}v_{i}1_{A_{i}}$ for $A_{i}$
in $\mathcal{E}$ and $v_{i}$ in $V$);
\item we say that $f$ is weakly measurable if, for every $\varphi$ in
$V^{\ast}$, $x\mapsto\langle f(x),\varphi\rangle_{V,V^{\ast}}$ is measurable;
\item if $V=U^{\ast}$ is the dual space of a Banach space $U$, we say that
$f$ is weakly-$\ast$ measurable iff, for every $\varphi \in U$, $x\mapsto\langle f(x),\varphi\rangle_{V,U}$
is measurable;
\item we say that $f$ is resp. strongly Borel, weakly Borel, weakly-$\ast$ Borel measurable if, for every open set $A$ in
$V$ resp. in the strong, weak, weak-$^\ast$ topology, $f^{-1}(A)$ is in $\mathcal{E}$. We omit strongly/weakly/weakly-$\ast$ when clear.
\end{itemize}

The following result is essentially the  Pettis Measurability Theorem. The present version is a consequence of \cite[Chapter I Propositions 1.9 and 1.10]{VTC87}.

\begin{lemma}\label{lem:equiv_topol}
Assume that $V$ is a separable Banach space. Then the notions of the strong measurability, the weak measurability, the strongly Borel measurability and the weakly Borel measurability coincide. They also coincide with the weak-$^\ast$ measurability and weakly-$\ast$ Borel measurability if in addition $V$ is reflexive.
\end{lemma}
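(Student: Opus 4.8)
The plan is to reduce the whole statement to the Pettis Measurability Theorem, quoted from \cite{VTC87}, after identifying the Borel $\sigma$-algebras attached to the strong and weak topologies. The organizing idea is that every notion in play should collapse onto measurability with respect to the $\sigma$-algebra $\sigma(V^\ast)$ generated by the evaluations $v\mapsto\langle v,\varphi\rangle$, $\varphi\in V^\ast$ --- a condition that is, by definition, exactly weak measurability.

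First I would use separability of $V$ to produce a countable norming family $(\varphi_n)_n\subset V^\ast$ with $\|v\|=\sup_n|\langle v,\varphi_n\rangle|$ for all $v$, obtained from Hahn--Banach at the points of a countable dense subset of $V$. Then every closed ball is a countable intersection of sets of the form $\{|\langle\cdot,\varphi_n\rangle-c|\le r\}$ and hence lies in $\sigma(V^\ast)$; since $V$ is separable and metric, every strongly open set is a countable union of open balls, so the strong Borel $\sigma$-algebra is contained in $\sigma(V^\ast)$. The reverse inclusion holds because each evaluation is strongly continuous, and since each evaluation is also weakly continuous while every weakly open set is strongly open, the weak Borel $\sigma$-algebra is trapped between $\sigma(V^\ast)$ and the strong Borel $\sigma$-algebra. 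Thus the strong Borel, weak Borel and $\sigma(V^\ast)$ $\sigma$-algebras all coincide, which gives at once the equivalence of weakly Borel measurability, strongly Borel measurability and weak measurability.

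To bring in strong measurability, the easy direction is that a strongly measurable map is weakly measurable, since it is a pointwise limit of simple functions and each $\langle f(\cdot),\varphi\rangle$ is then a pointwise limit of measurable scalar functions. The converse --- that a weakly measurable map into the separable space $V$ is strongly measurable --- is precisely the Pettis theorem as in \cite[Chapter I Propositions 1.9 and 1.10]{VTC87}, separability making the essential-separable-valuedness hypothesis automatic. This closes the chain in the general separable case. This Pettis implication is the only genuinely delicate step, and it is exactly where the separability hypothesis is essential; everything else is $\sigma$-algebra bookkeeping that I expect to be routine.

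For the reflexive case I would first record two structural facts: reflexivity of $V=U^\ast$ forces $U$ to be reflexive with $U=V^\ast$, and a separable reflexive space has separable dual, so $U=V^\ast$ is separable as well. The decisive observation is that for reflexive $V$ the weak and weak-$\ast$ topologies on $V$ coincide, testing against $U=V^\ast$ being the same as testing against $V^\ast$. Consequently weak-$\ast$ measurability is verbatim the same condition as weak measurability, and the weakly-$\ast$ Borel $\sigma$-algebra equals the weak Borel $\sigma$-algebra; both are thereby absorbed into the equivalences already established, completing the proof.
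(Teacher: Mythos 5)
Your proof is correct and follows essentially the same route as the paper, which simply derives the lemma from the Pettis Measurability Theorem as stated in \cite[Chapter I Propositions 1.9 and 1.10]{VTC87}; your additional $\sigma$-algebra bookkeeping (identifying the strong Borel, weak Borel and $\sigma(V^\ast)$ $\sigma$-algebras via a countable norming family, and collapsing weak-$\ast$ onto weak in the reflexive case) correctly fills in the routine steps the paper leaves to the reference.
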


%

We prove here a statement concerning weak-$^\ast$ and weakly-$\ast$ Borel measurability, which applies in particular to $\mc{M}_x=(C_x)^\ast$. We call $\bar{B}_R$ the closed centered ball in $V$ of radius $R$ (in the strong topology).

\begin{lemma}\label{rmk:Borel_weakstar}
Assume that $V=U^{\ast}$ is the dual space of a separable Banach space $U$. Then the notions of weak-$^\ast$ measurability and of weakly-$\ast$ Borel measurability coincide. Moreover, for any sequence $(\varphi_k)_k$ dense in the unit centered ball of $U$, the Borel $\sigma$-algebra associated to the weak-$^\ast$ topology is generated by
\begin{align*}
\lan \cdot,\varphi_k\ran.
\end{align*}
\end{lemma}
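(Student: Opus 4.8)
The plan is to identify the three $\sigma$-algebras on $V=U^\ast$ that are implicitly in play and show they all coincide. Write $\mc{G}_0$ for the $\sigma$-algebra generated by the evaluations $v\mapsto\lan v,\varphi_k\ran$, $k\in\mathbb{N}$; write $\mc{G}$ for the $\sigma$-algebra generated by all evaluations $v\mapsto\lan v,\varphi\ran$, $\varphi\in U$; and write $\mc{B}_{w^\ast}$ for the Borel $\sigma$-algebra of the weak-$^\ast$ topology. By definition, the weak-$^\ast$ measurability of a map $f$ is exactly its measurability into $(V,\mc{G})$, while its weakly-$\ast$ Borel measurability is its measurability into $(V,\mc{B}_{w^\ast})$. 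Hence both assertions of the Lemma will follow at once from the single identity $\mc{G}_0=\mc{G}=\mc{B}_{w^\ast}$: the equivalence of the two measurability notions is $\mc{G}=\mc{B}_{w^\ast}$, and the ``moreover'' statement is $\mc{B}_{w^\ast}=\mc{G}_0$.

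First I would prove $\mc{G}_0=\mc{G}$. The inclusion $\mc{G}_0\subseteq\mc{G}$ is trivial. For the converse, fix $\varphi\in U$ with $\varphi\neq0$ (the case $\varphi=0$ being trivial) and set $\psi=\varphi/\|\varphi\|_U$, which lies in the closed unit ball of $U$. Density of $(\varphi_k)_k$ yields a subsequence with $\|\varphi_{k_j}-\psi\|_U\to0$, and since $|\lan v,\varphi_{k_j}\ran-\lan v,\psi\ran|\le\|v\|_V\|\varphi_{k_j}-\psi\|_U$ for every $v$, the map $v\mapsto\lan v,\psi\ran$ is a pointwise limit of $\mc{G}_0$-measurable functions, hence $\mc{G}_0$-measurable; therefore so is $v\mapsto\lan v,\varphi\ran=\|\varphi\|_U\lan v,\psi\ran$. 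Thus every generator of $\mc{G}$ is $\mc{G}_0$-measurable, giving $\mc{G}\subseteq\mc{G}_0$.

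Next, $\mc{G}\subseteq\mc{B}_{w^\ast}$ is immediate because each evaluation is weak-$^\ast$ continuous. The substantive step is $\mc{B}_{w^\ast}\subseteq\mc{G}$, which I would obtain by exhausting $V$ by balls on which the weak-$^\ast$ topology is metrizable. Since $U$ is separable, $\|v\|_V=\sup_k|\lan v,\varphi_k\ran|$ is $\mc{G}_0$-measurable, so each ball $\bar{B}_n=\{\|v\|_V\le n\}$ lies in $\mc{G}$ and $V=\bigcup_n\bar{B}_n$. By Banach--Alaoglu $\bar{B}_n$ is weak-$^\ast$ compact, and by \cite[Theorem 3.28]{Brezis-2011} the weak-$^\ast$ topology on $\bar{B}_n$ is metrized by $d(v,v')=\sum_k2^{-k}|\lan v-v',\varphi_k\ran|$. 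For fixed $v_0$ the function $v\mapsto d(v,v_0)$ is $\mc{G}$-measurable, so every open $d$-ball belongs to the trace $\sigma$-algebra $\mc{G}|_{\bar{B}_n}$; as $(\bar{B}_n,d)$ is compact, hence separable, its open sets are countable unions of such balls, whence $\mc{B}(\bar{B}_n,w^\ast)=\mc{B}(\bar{B}_n,d)\subseteq\mc{G}|_{\bar{B}_n}$. Using the elementary fact that the Borel $\sigma$-algebra of a subspace equals the trace of the ambient Borel $\sigma$-algebra, any $A\in\mc{B}_{w^\ast}$ satisfies $A\cap\bar{B}_n\in\mc{B}_{w^\ast}|_{\bar{B}_n}=\mc{B}(\bar{B}_n,w^\ast)\subseteq\mc{G}|_{\bar{B}_n}\subseteq\mc{G}$ (the last inclusion because $\bar{B}_n\in\mc{G}$), so that $A=\bigcup_n(A\cap\bar{B}_n)\in\mc{G}$.

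Combining the three inclusions gives $\mc{G}_0=\mc{G}=\mc{B}_{w^\ast}$, which yields both claims as explained above. I expect the only genuine obstacle to be the inclusion $\mc{B}_{w^\ast}\subseteq\mc{G}$: the weak-$^\ast$ topology on all of $V$ is in general neither metrizable nor second countable, so the separability of $U$ cannot be used directly and must be exploited through the metrizability of the norm-balls together with the $\sigma$-compact exhaustion $V=\bigcup_n\bar{B}_n$. The remaining passages are routine.
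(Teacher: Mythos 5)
Your proposal is correct and follows essentially the same route as the paper's proof: both reduce everything to the identity between the evaluation-generated $\sigma$-algebra and the weak-$^\ast$ Borel $\sigma$-algebra, prove the nontrivial inclusion by exhausting $V$ with norm-balls (measurable because $\|v\|_V=\sup_k|\lan v,\varphi_k\ran|$), and use the metrizability and separability of the weak-$^\ast$ topology on each ball via the metric $d(v,v')=\sum_k 2^{-k}|\lan v-v',\varphi_k\ran|$ to write weak-$^\ast$ open sets as countable unions of $d$-balls. Your treatment of the trace $\sigma$-algebra is slightly more explicit than the paper's, but the argument is the same.
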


\begin{remark}\label{rmk:restriction}
We recall that, if $(E,\mc{E})$ is a measurable space, $\mc{I}$ generates the $\sigma$-algebra $\mc{E}$ and $F$ is a subset of $E$, then the $\sigma$-algebra $\mc{E}\mid_F=\{A\cap F: A\in \mc{E}\}$ on $F$ is the $\sigma$-algebra generated on $F$ by $\mc{I}\mid_F=\{I\cap F: I\in \mc{I}\}$. In particular, the Borel $\sigma$-algebra restricted to a subset $F$ is the Borel $\sigma$-algebra on $F$ (with the topology restricted on $F$) and the previous statement can be extended to subsets of $U$.
\end{remark}

\begin{proof}
We fix the sequence $(\varphi_k)_k$. We call $\mc{B}$ the Borel $\sigma$-algebra associated to the weak-$^\ast$ topology and $\mc{C}$ the $\sigma$-algebra generated by the maps $\lan \cdot,\varphi\ran$ for $\varphi \in U$. Since $\varphi_k$ are dense in the unit centered ball of $U$, $\mc{C}$ is generated by the maps $\lan \cdot,\varphi_k\ran$. We will show that $\mc{B}=\mc{C}$, what implies both statements in the Lemma. Since the maps $\lan \cdot,\varphi\ran$, for $\varphi \in U$, are continuous in the weak-$^\ast$ topology, $\mc{C}\subseteq \mc{B}$. For the converse inclusion, it is enough to show that, for any $R>0$, for any open set $A$ in the weak-$^\ast$ topology, the sets $\bar{B}_R$ and $A\cap \bar{B}_R$ are in $\mc{C}$, where $\bar{B}_R$ is the closed centered ball in $V$ of radius $R$ (in the strong topology). By separability of $U$, we can fix a sequence $(\varphi_k)_k$ which is dense in the unit centered ball of $U$. For any $R>0$, the ball $\bar{B}_R$ is in $\mc{C}$ because the strong norm on $V$ is $\mc{C}$-measurable. Indeed,  it can be written as
\begin{align*}
\|v\| = \sup_{k} |\lan v,\varphi_k\ran|.
\end{align*}
We recall that the weak-$^\ast$ topology, restricted on $\bar{B}_R$ is separable and metrizable, see \cite[Theorem 3.28]{Brezis-2011} for the metrizability and  separability follows from the  compactness, with the distance
\begin{align*}
d(v,v') = \sum_{k} 2^{-k}|\lan v-v',\varphi_k\ran|.
\end{align*}
Now, for every $v$ in $\bar{B}_R$, $d(v,\cdot)$ is $\mc{C}$-measurable, hence any open ball with respect to $d$ is in $\mc{C}$. Moreover, for any open set $A$ in the weak-$^\ast$ topology, $A\cap \bar{B}_R$ can be written as countable union of open balls with respect to $d$, hence $A\cap \bar{B}_R$ is in $\mc{C}$. The proof is complete.
\end{proof}

Now we give a measurability property of the testing against bounded, but not necessarily continuous maps. Here, given a Polish space $X$, $\mc{M}(X)$ is the set of finite \Radon measures on $X$.

\begin{lemma}\label{rmk:bounded_test_Borel}
Let $F:X\to \mr$ be a bounded Borel function on a compact metric space $X$ (in particular $X=\mt^2$). Then the map
\begin{align*}
\Psi_F: \mc{M}(X) \ni \mu \mapsto \int_X F(x) \mu(\rmd x) \in \mr
\end{align*}
is Borel with respect to the weak-$^\ast$ topology on $\mc{M}(X)$.
\end{lemma}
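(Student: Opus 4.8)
The plan is to prove this by a functional monotone class (Dynkin multiplicative system) argument, bootstrapping from continuous $F$ to all bounded Borel $F$. First I would introduce the class
\[
\mc{H} := \{F:X\to\mr \text{ bounded Borel} : \Psi_F \text{ is Borel on } (\mc{M}(X),w^\ast)\},
\]
noting that each $\Psi_F(\mu)=\int_X F\,\mathrm{d}\mu$ is well defined since $F$ is bounded and $\mu$ is a finite measure. The base case is $F\in C(X)$: by the very definition of the weak-$^\ast$ topology on $\mc{M}(X)$, the map $\mu\mapsto\int_X F\,\mathrm{d}\mu$ is weak-$^\ast$ continuous, hence Borel, so $C(X)\subseteq\mc{H}$.

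Next I would verify the structural closure properties of $\mc{H}$. It is plainly a vector space containing the constants, since $\Psi_{aF+bG}=a\Psi_F+b\Psi_G$ and $\Psi_1(\mu)=\mu(X)$ is weak-$^\ast$ continuous. The key property is stability under bounded pointwise limits: if $F_n\in\mc{H}$ with $\sup_n\|F_n\|_\infty<\infty$ and $F_n\to F$ pointwise, then for each fixed $\mu$ the dominated convergence theorem applied to the finite total-variation measure $|\mu|$ gives $\Psi_{F_n}(\mu)\to\Psi_F(\mu)$. Thus $\Psi_F$ is a pointwise limit of Borel functions on $\mc{M}(X)$ and is therefore itself Borel, i.e.\ $F\in\mc{H}$. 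The finiteness of the measures is exactly what makes dominated convergence available here.

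Finally I would invoke the functional monotone class theorem. The class $C(X)$ is multiplicative (a product of continuous functions is continuous) and is contained in $\mc{H}$; moreover, since $X$ is a compact metric space, $C(X)$ generates the Borel $\sigma$-algebra, because every closed set $F$ is the zero set of the continuous function $x\mapsto d(x,F)$, whence $\sigma(C(X))=\mc{B}(X)$. The theorem then asserts that any vector space of bounded real functions which contains the constants, is closed under bounded (in particular bounded monotone) pointwise limits, and contains a multiplicative generating class, must contain all bounded $\sigma(C(X))$-measurable functions; hence $\mc{H}$ contains every bounded Borel function, which is the claim.

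This argument is essentially routine, and the only points requiring care — the nearest thing to an obstacle — are matching the hypotheses of the monotone class theorem precisely: the multiplicativity of $C(X)$, the identification $\sigma(C(X))=\mc{B}(X)$ via metrizability of $X$, and the step "pointwise limit of Borel is Borel", which relies on each approximant $\Psi_{F_n}$ already being known to be Borel.
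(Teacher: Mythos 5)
Your proof is correct and follows essentially the same route as the paper's: both start from the weak-$^\ast$ continuity of $\Psi_F$ for continuous $F$, use dominated convergence (with respect to $|\mu|$) to pass to pointwise limits, and conclude by a monotone class argument. The only cosmetic difference is that the paper first treats indicators of open sets as monotone limits of continuous functions and then applies the set-based monotone class theorem, whereas you invoke the functional (multiplicative system) version directly with $C(X)$ as the generating class; both are equally valid.
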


\begin{proof}
If $F$ is continuous, then also $\Psi_F$ is continuous in the weak-$^\ast$ topology, in particular weakly-$\ast$ Borel. If $F=1_A$ is the indicator of an open set $A$ in $\mt^2$, then $1_A$ is the pointwise (everywhere) non-decreasing limit on $\mt^2$ of continuous functions $F_n$; so, by the dominated convergence theorem, $\Psi_{1_A}$ is the pointwise limit of $\Psi_{F_n}$ and so it is also weakly-$\ast$ Borel. For the case of general $F$, we use the monotone class theorem. We consider the set $W$ of Borel functions $F$ on $X$ such that $\Psi_F$ is weakly-$\ast$ Borel. Then $\mc{W}$ contains the indicators of all the open sets, it is a vector space and it is stable under monotone non-decreasing convergence. Indeed, if $(F_n)_n$ is a non-decreasing sequence in $\mc{W}$ converging pointwise to $F$, then, by the dominated convergence theorem, $\Psi_F$ is the pointwise limit of $\Psi_{F_n}$, in particular weakly-$\ast$ Borel, and so $F$ belongs also to $\mc{W}$. Then, by the Monotone Class Theorem, $\mc{W}$ contains all bounded Borel functions $F$ on $\mt^2$, which gives the result.
\end{proof}

%

We recall a classical fact for the product of measures. For a compact metric space $X$, we call $\mc{M}(X)$ the set of finite \Radon measures on $X$, dual to the space $C(X)$ of continuous function on $X$, and, for $M>0$, $\mc{M}_M(X)$ the closed centered ball on $\mc{M}(X)$ of radius $M$.

\begin{lemma}\label{rmk:cont_prod_meas}
For any compact metric space $X$, the map $G:\mc{M}(X)\ni \mu\mapsto \mu\otimes \mu \in \mc{M}(X\times X)$ is Borel with respect to the weak-$^\ast$ topologies. Moreover, for any $M>0$, the map $G$, restricted on $\mc{M}_M(X)$ with values in $\mc{M}_{M^2}(X\times X)$, is continuous with respect to the weak-$^\ast$ topologies.
\end{lemma}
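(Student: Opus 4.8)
The plan is to establish the continuity statement first and then deduce Borel measurability from it. The starting observation is that the product map respects the total variation norm: for any $\mu \in \mc{M}(X)$ one has $\|\mu\otimes\mu\|_{\mc{M}(X\times X)} = \|\mu\|_{\mc{M}(X)}^2$ (splitting $\mu=\mu^+-\mu^-$ and noting $(\|\mu^+\|+\|\mu^-\|)^2=\|\mu\|^2$), so that $G$ indeed maps $\mc{M}_M(X)$ into $\mc{M}_{M^2}(X\times X)$, as claimed. The second ingredient I would record at the outset is the density, guaranteed by the Stone--Weierstrass theorem, of the linear span $\mc{A}$ of the product functions $(x,y)\mapsto f(x)g(y)$, $f,g\in C(X)$, in $C(X\times X)$; this holds because $\mc{A}$ is an algebra containing the constants and separating the points of the compact metric space $X\times X$.

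For the continuity, since the weak-$^\ast$ topology on the ball $\mc{M}_{M^2}(X\times X)$ is metrizable (see \cite[Theorem 3.28]{Brezis-2011}), it suffices to prove sequential continuity. So I would take a sequence $\mu^n\to\mu$ in $(\mc{M}_M(X),w^\ast)$ and test $\mu^n\otimes\mu^n$ against an arbitrary $\Phi\in C(X\times X)$. On a product function the computation is immediate,
\[
\lan \mu^n\otimes\mu^n, f\otimes g\ran = \lan\mu^n,f\ran\,\lan\mu^n,g\ran \longrightarrow \lan\mu,f\ran\,\lan\mu,g\ran = \lan\mu\otimes\mu, f\otimes g\ran,
\]
using that the product of two convergent scalar sequences converges to the product of the limits; by linearity the convergence $\lan\mu^n\otimes\mu^n,\Psi\ran\to\lan\mu\otimes\mu,\Psi\ran$ then holds for every $\Psi\in\mc{A}$. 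To upgrade this to an arbitrary $\Phi\in C(X\times X)$ I would run a standard $3\eps$ argument: given $\eps>0$ pick $\Psi\in\mc{A}$ with $\|\Phi-\Psi\|_{C(X\times X)}<\eps$, and bound
\[
|\lan\mu^n\otimes\mu^n-\mu\otimes\mu,\Phi\ran| \le |\lan\mu^n\otimes\mu^n,\Phi-\Psi\ran| + |\lan\mu^n\otimes\mu^n-\mu\otimes\mu,\Psi\ran| + |\lan\mu\otimes\mu,\Psi-\Phi\ran|,
\]
where the first and third terms are at most $M^2\eps$ thanks to the uniform total-variation bound $\|\mu^n\otimes\mu^n\|,\|\mu\otimes\mu\|\le M^2$, and the middle term tends to $0$ by the convergence on $\mc{A}$. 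This yields $\lan\mu^n\otimes\mu^n,\Phi\ran\to\lan\mu\otimes\mu,\Phi\ran$ for all $\Phi$, i.e. weak-$^\ast$ continuity of $G$ on $\mc{M}_M(X)$.

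For the Borel measurability on all of $\mc{M}(X)$ I would decompose $\mc{M}(X)=\bigcup_{n\ge1}\mc{M}_n(X)$. Each ball $\mc{M}_n(X)$ is a Borel subset for the weak-$^\ast$ topology, since by Lemma \ref{rmk:Borel_weakstar} the norm $\mu\mapsto\|\mu\|_{\mc{M}(X)}=\sup_k|\lan\mu,\varphi_k\ran|$ is Borel; and the restriction of $G$ to $\mc{M}_n(X)$ is continuous, hence Borel, by the part just proved. A map that is Borel on each member of a countable Borel cover of its domain is Borel (for a target Borel set $B$, $G^{-1}(B)=\bigcup_n G^{-1}(B)\cap\mc{M}_n(X)$ is a countable union of Borel sets), which gives the first assertion. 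Alternatively, and equivalently, one can argue directly: by Lemma \ref{rmk:Borel_weakstar} it is enough to check that $\mu\mapsto\lan\mu\otimes\mu,\Phi\ran$ is Borel for each $\Phi\in C(X\times X)$, and this map is the pointwise limit of the Borel maps $\mu\mapsto\lan\mu\otimes\mu,\Psi_m\ran$ for Stone--Weierstrass approximants $\Psi_m\to\Phi$, the pointwise convergence following from $|\lan\mu\otimes\mu,\Phi-\Psi_m\ran|\le\|\mu\|^2\|\Phi-\Psi_m\|_{C(X\times X)}$. The only point requiring real care is precisely this passage from the dense algebra $\mc{A}$ to all of $C(X\times X)$: the uniform bound $M^2$ on the total variation is what makes the $3\eps$ estimate work in the continuity part, and its pointwise analogue $\|\mu\|^2<\infty$ is what makes the dominated-limit argument work in the measurability part. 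I expect no further genuine obstacles.
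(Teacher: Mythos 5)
Your proof is correct and follows essentially the same route as the paper's: continuity on the balls via convergence on product test functions, Stone--Weierstrass density, and the uniform total-variation bound (you merely spell out the $3\eps$ step that the paper calls ``a standard approximation argument''), and then Borel measurability on the whole space by writing $\mc{M}(X)$ as the countable union of the Borel balls $\mc{M}_n(X)$ on which $G$ is continuous. The observation $\|\mu\otimes\mu\|_{\mc{M}(X\times X)}=\|\mu\|_{\mc{M}(X)}^2$ and the alternative measurability argument via Lemma \ref{rmk:Borel_weakstar} are fine additions but do not change the substance.
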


\begin{proof}
For $M>0$, we call $G_M:\mc{M}_M(X) \to\mc{M}_{M^2}(X\times X)$ the map $G$ restricted on $\mc{M}_M(X)$ with values in $\mc{M}_{M^2}(X\times X)$. We start showing the continuity of $G_M$. By metrizability of $\mc{M}_M(X)$ and $\mc{M}_{M^2}(X\times X)$, it is enough to show that, if $(\mu^n)_n$ is a sequence in $\mc{M}_M(X)$ converging weakly-$\ast$ to $\mu$, then $(\mu^n\otimes \mu^n)_n$ converges weakly-$\ast$ to $\mu\otimes \mu$. For every two continuous functions $\varphi$, $\psi$ on $X$, we have
\begin{align*}
\lan \varphi\otimes \psi, \mu^n\otimes \mu^n \ran = \lan \varphi,\mu^n\ran \lan \psi,\mu^n\ran \to \lan \varphi\otimes \psi, \mu\otimes \mu \ran.
\end{align*}
Now the set of all linear combinations of $\varphi\otimes \psi$ for all continuous functions $\varphi$, $\psi$ is a subalgebra of $C(X\times X)$ which separates point, hence, by the Stone-Weierstrass theorem, it is dense in $C(X\times X)$. Then, for any $\phi$ continuous function on $X\times X$, by a standard approximation argument on $\phi$ we get that $(\lan \phi,\mu^n\otimes\mu^n \ran)_n$ converges to $\lan \phi,\mu\otimes\mu \ran$. This shows continuity of the map $G$ restricted to $\mc{M}_M(X)$.

For Borel measurability on the full space, take any open set $A$ in $\mc{M}(X\times X)$, then $G^{-1}(A)$ is the non-decreasing union of $G_M^{-1}(A\cap \mc{M}_{M^2}(X\times X))$ for $M$ in $\mathbb{N}$. By continuity of $G_M$, $G_M^{-1}(A\cap \mc{M}_{M^2}(X\times X))$ is open, hence Borel, in $\mc{M}_M(X)$. Moreover $\mc{M}_M(X)$ is itself a Borel set in $\mc{M}(X)$. Indeed the closed centered ball $\bar{B}_R$ in a dual space $V=U^\ast$ is Borel, as shown in the proof of Lemma \ref{rmk:Borel_weakstar}. So $G_M^{-1}(A\cap \mc{M}_{M^2}(X\times X))$ is Borel in $\mc{M}(X)$. Therefore $A$ is Borel in $\mc{M}(X)$. The proof is complete.
\end{proof}

We conclude on measurability of the $H^h$ norms:

\begin{remark}\label{rmk:Borel_norm}
For any fixed integer $h$, the $H^h_x$ norm can be written as supremum of $|\lan \cdot,\varphi\ran|$ over a set $D$ of $\varphi \in C_x$, with $D$ countable and dense in $H^h_x$. Therefore the $H^h_x$ norm is a lower semi-continuous function and Borel function on $(\mc{M}_x,w^\ast )$.

The $C_t^\alpha(H^h_x)$ norm can be written as
\begin{align*}
\|f\|_{C^\alpha_t(H^h_x)}= \sup_{t\in \mathbb{Q}\cap[0,T]}\|f_t\|_{H^h_x} +\sup_{s,t\in \mathbb{Q}\cap[0,T],s<t} \frac{\|f_t-f_s\|_{H^h_x}}{|t-s|^\alpha}
\end{align*}
(note the supremum over a countable set of times). Therefore, for any fixed $M>0$, the $C_t(H^h_x)$ norm is a lower semi-continuous function, in particular a Borel function, on $C_t(\mc{M}_{x,M},w^\ast )$.
\end{remark}

\end{appendices}
\noindent
\textbf{Acknowledgement.} We would like to thank to James-Michael Leahy for pointing out the presence of the constant $\gamma$ in the equation \eqref{eq:stochEulervel_intro} for the velocity, and to  Jasper Hoeksema and Oliver Tse for pointing out a mistake in Lemma \ref{rmk:bounded_test_Borel}, in a previous draft of this paper, and a suggestion  how to correct it. This work was undertaken mostly when M.M.~was at the University of York, supported by the Royal Society via the Newton International Fellowship NF170448 ``Stochastic Euler Equations and the Kraichnan model''. Finally, we would like to thank Philippe Serfati for pointing out several references on deterministic Euler equations.

\bibliographystyle{alpha}
\bibliography{my_bib7}

\end{document}